\definecolor{col}{rgb}{0.7,0.3,0.1}
\newtheorem{thm}{Theorem}[section]
\newtheorem{cor}[thm]{Corollary}
\newtheorem{lem}[thm]{Lemma}
\newtheorem{prop}[thm]{Proposition}
\newtheorem{defi}[thm]{Definition}
\def\pint{\operatorname {--\!\!\!\!\!\int\!\!\!\!\!--}}
\theoremstyle{remark}
\newtheorem{rem}[thm]{Remark}
\numberwithin{equation}{section}
\def\supp{\mathop{\text{\normalfont supp}}}
\def\tailg{\mathop{\text{\normalfont Tail}_g}}
\def\tailp{\mathop{\text{\normalfont Tail}_p}}
\def\tailpp{\mathop{\text{\normalfont Tail}_{p^+}}}
\def\tailpm{\mathop{\text{\normalfont Tail}_{p^-}}}
\def\osc{\mathop{\text{\normalfont osc}}}
\def\argmin{\mathop{\text{\normalfont Argmin}}}
\newcommand{\W}{\widetilde{W}}
\newcommand{\gfls}{(-\Delta_g)^s}
\newcommand{\R}{\mathbb{R}}
\newcommand{\N}{\mathbb{N}}
\newcommand{\ve}{\varepsilon}
\newcommand{\lam}{\lambda}
\newcommand{\intr}{\iint_{\R^n\times\R^n}}
\newcommand{\average}{{\mathchoice {\kern1ex\vcenter{\hrule height.4pt
width 6pt depth0pt} \kern-9.7pt} {\kern1ex\vcenter{\hrule
height.4pt width 4.3pt depth0pt} \kern-7pt} {} {} }}
\def\R{\mathbb{R}}
\begin{document}

\title[Interior and boundary regularity for the fractional $g$-Laplacian]{Interior and up to the boundary regularity for the fractional $g$-Laplacian: the convex case}

\author{Juli\'an Fern\'andez Bonder, Ariel Salort and Hern\'an Vivas}

\address[JFB and AS]{Instituto de C\'alculo (IC), CONICET\\
Departamento de Matem\'atica, FCEN - Universidad de Buenos Aires\\
Ciudad Universitaria, Pabell\'on I, C1428EGA, Av. Cantilo s/n\\
Buenos Aires, Argentina}

\email[JFB]{jfbonder@dm.uba.ar}

\email[AS]{asalort@dm.uba.ar}

\address[HV]{Centro Marplatense de Investigaciones matem\'aticas/Conicet, Dean Funes 3350, 7600 Mar del Plata, Argentina}

\email{havivas@mdp.edu.ar}

\subjclass{35J62; 35B65}

\keywords{Fractional $g-$Laplacian; elliptic regularity; boundary regularity}

\begin{abstract}
We establish interior and up to the boundary H\"older regularity estimates for weak solutions of the Dirichlet problem for the fractional $g-$Laplacian with bounded right hand side and $g$ convex. These are the first regularity results available in the literature for integro-differential equations in the context of fractional Orlicz-Sobolev spaces.  
\end{abstract}

\maketitle

\section{Introduction and main results}

The aim of this work is to prove interior and up to the boundary H\"older regularity for solutions of the Dirichlet problem for the fractional $g-$Laplacian:
\begin{equation}\label{eq.dir}
\left\{ \begin{array}{cccl}
\gfls u  & = & f &  \textrm{ in }\Omega \\
u & = & 0  &\textrm{ in }\R^n\setminus\Omega,
\end{array} \right.
\end{equation}
where $\Omega$ a bounded open subset of $\R^n$ with $C^{1,1}$ boundary, $f\in L^\infty(\Omega)$ and $\gfls$ is the fractional $g-$Laplacian defined in \cite{FBS}: 
\begin{equation}\label{eq.gfls}
\gfls u(x):=2\, \textrm{p.v.}\int_{\R^n}g\left(\frac{u(x)-u(y)}{|x-y|^s}\right)\frac{dy}{|x-y|^{n+s}}
\end{equation}
with $g=G'$ the derivative of a Young function  $G$ (see Section \ref{sec.prel} for definitions) and p.v. stands for the principal value.

Fractional  operators arise naturally in the study of L\'evy processes with jumps, where the infinitesimal generator of a stable pure jump process is given  by a nonlocal operator. These processes have attracted much interest both in the PDE and Probability communities, since model  a wide range of phenomena in Physics, Finance, Image processing, or Ecology; see \cite{Ap04,CT16} and references therein.

Although the development of some aspects of the theory of nonlocal operators from the point of view of Analysis can be traced back several decades, either from a Harmonic Analysis or a potential theoretic approach, it was the seminal program developed by Caffarelli and Silvestre in \cite{CS1, CS2, CS3} that opened up the way for a (so to speak) modern regularity theory for elliptic nonlocal equations. Indeed, they developed a regularity theory for fully nonlinear nonlocal uniformly elliptic equations in a parallel fashion to that of the nowadays classic second order theory. 
The interested reader is referred to \cite{BV16, Ro1} further discussions and references.

The interior regularity results in Caffarelli and Silvestre's works relied heavily on a notion of ellipticity that constrains the behavior of the kernels of the operators to have certain smoothness and/or decay. These works were extended 
in \cite{K, Se1} 
to the class of \emph{rough} kernels, which can be highly oscillatory and irregular and further developments where made in 
\cite{CD}, where the symmetry assumption on the kernels is dropped. Also, non-translation invariant operators were considered in 
\cite{JX}. A parallel line of work was developed 
in \cite{RS1}, where it is  established   interior regularity estimates for general stable operators.
It is also worth highlighting the work 
\cite{Y} related with the Calder\'on-Zygmund theory for nonlinear  nonlocal operators
 
Once the interior regularity is understood, the next step is to study the the boundary regularity of solutions, which presents many differences  with respect to the case of local second order equations. Some of the most important advances in that direction are Ros-Oton and Serra's cited paper \cite{RS1} and its fully nonlinear counterpart \cite{RS2}. In these works they study Dirichlet problems of the form
\begin{equation}\label{pb}
\left\{ \begin{array}{rcll}
L u &=&f&\textrm{in }\Omega \\
u&=&0&\textrm{in }\R^n\backslash\Omega,
\end{array}\right.
\end{equation}
with $L$ a stable operator. Their main results are that if $\Omega$ is $C^{1,1}$ and $f\in L^\infty(\Omega)$, then $u/d^s\in C^{s-\varepsilon}(\overline\Omega)$ for all $\varepsilon>0$ for the linear case and if $\Omega$ is $C^{2,\gamma},\:a\in C^{1,\gamma}(S^{n-1})$, then if $f\in C^\gamma(\overline\Omega)$ then $u/d^s\in C^{\gamma+s}(\overline\Omega)$ for $\gamma\in(0,s)$ whenever $\gamma+s \notin \N$ for the nonlinear case. Here $d(x)$ is the distance to $\partial\Omega$ and $a$ is the spectral measure of $L$. 

On the other hand, using a rather different approach Grubb proved in \cite{GG15,GG14}, for a problem like \eqref{pb}, that if $\Omega$ is $C^\infty$ and $a\in C^\infty(S^{n-1})$, then
\[\quad f\in C^\gamma(\overline\Omega)\quad \Longrightarrow\quad u/d^s\in C^{\gamma+s}(\overline\Omega)\qquad \textrm{for all}\ \gamma>0,\]
whenever $\gamma+s\notin\mathbb Z$ for elliptic pseudodifferential operators satisfying the $s$-transmission property. In particular, $u/d^s\in C^\infty(\overline\Omega)$ whenever $f\in C^\infty(\overline\Omega)$. Moreover, when $s+\gamma \in\mathbb{N}$, more information is given in \cite{GG14} in terms of H\"older-Zygmund spaces $C^k_*$.

A common feature of all of the above cited papers is that they deal with the \emph{uniformly elliptic} case, in which, roughly speaking, the operator under study lies between two multiples of the \emph{fractional Laplacian}:
\[
(-\Delta)^su(x) \cong 2\,\text{p.v.}\int_{\R^n} \frac{u(x)-u(y)}{|x-y|^{n+2s}}dy
\]
arguably the most canonical example of nonlocal operators. The archetypal example of a \emph{degenerate elliptic} operator is given by the \emph{fractional $p-$Laplacian}: 
\[
(-\Delta)_p^su(x) = 2 \,\text{p.v.}\int_{\R^n} \frac{|u(x)-u(y)|^{p-2}(u(x)-u(y))}{|x-y|^{n+ps}}dy,\quad p>2.
\]
This operator arises, for instance, when studying minimizers of the  seminorm
\begin{equation}\label{eq.gag}
[u]_{s,p}:=\iint \frac{|u(x)-u(y)|^p}{|x-y|^{n+ps}}\:dxdy
\end{equation}
which are distinctive of the well-known fractional Sobolev spaces, see  \cite{DNPV}. 

Such scenario has been investigated thoroughly in the last years. Just to cite some relevant examples,  in \cite{dicastro} it was 
proved a   H\"older regularity result 
and a Harnack inequality in  \cite{dicastro2}. Higher H\"older estimates were   obtained in \cite{BrLiSch} and
higher integrability type results in \cite{BrLi}.  The eigenvalue problem was addressed in \cite{LiLi}. These papers concern the interior regularity of solutions; regarding global regularity, the most important work available is the paper \cite{IMS} 
on global H\"older regularity for the fractional $p-$Laplacian, which has been a mayor source of inspiration for our manuscript.

A rather natural question of interest is to replace the power growth in \eqref{eq.gag} by another type of behavior; this immediately gives way to consider  the fractional order Orlicz-Sobolev spaces, introduced by the first and the second author in \cite{FBS}. These spaces have received a great amount of attention in the last years, either in the study of their structural properties \cite{BOT, Cianchi, Cianchi2} or the PDE questions that arise such as eigenvalue problems \cite{S, SV} and P\'olya-Szeg\"o type results \cite{DNFBS}.

However, to the best of the authors' knowledge \emph{there are no results at all available in the literature dealing with regularity issues in this context}. The main goal of this paper is, therefore, to fill that gap. The main result we prove is global 
H\"older regularity for solutions of \eqref{eq.dir}. This is achieved under a 
suitable assumption on the smoothness of $\partial \Omega$ and a structural ``ellipticity'' assumption on $g$ 
(see equation \eqref{L} below). The result is stated as follows: 
\begin{thm}\label{thm.main}
There exist $\alpha\in(0,s]$ depending only on $n$, $s$, $\Omega$, $\Lambda$ and $\lambda$ such that for any weak solution $u\in W^{s,G}_0(\Omega)$ of \eqref{eq.dir}, $u\in C^\alpha(\overline\Omega)$ and
\begin{equation} \label{global.bound}
\| u \|_{C^\alpha (\overline\Omega)}\leq  C
\end{equation}
where $C$ is a constant depending on $s,n,\lambda,\Lambda,\|f\|_{L^\infty(\Omega)}$ and $\Omega$. 
\end{thm}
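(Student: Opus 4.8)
The plan is to follow the now-classical two-stage strategy for global Hölder regularity of nonlocal degenerate equations, as pioneered for the fractional $p$-Laplacian in \cite{IMS}: first establish interior Hölder regularity away from $\partial\Omega$, then produce a barrier that controls the solution near $\partial\Omega$ by a multiple of $d^s$ (equivalently, a multiple of $d^\alpha$ for a suitable $\alpha\le s$), and finally glue the two estimates together by a covering/scaling argument. Throughout, the convexity of $g$ and the ellipticity bounds \eqref{L} relating $g$ to the pure power $t^{p-1}$ (with $p$ the relevant Orlicz exponent) are what allow us to import the quantitative De Giorgi–type machinery; convexity ensures monotonicity and the comparison principle, and it is what makes the Orlicz setting close enough to the $p$-Laplacian case that the iteration schemes survive.

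The first step is an \emph{a priori global $L^\infty$ bound}: since $u\in W^{s,G}_0(\Omega)$ is a weak solution with $f\in L^\infty(\Omega)$, a De Giorgi level-set truncation $u_k=(u-k)_+$ tested against the equation, combined with the fractional Orlicz–Sobolev embedding $W^{s,G}_0(\Omega)\hookrightarrow L^{G_*}(\Omega)$ and the ellipticity \eqref{L}, yields $\|u\|_{L^\infty(\R^n)}\le C(n,s,\lambda,\Lambda,\Omega)\|f\|_{L^\infty(\Omega)}$; here one uses that $u\equiv 0$ outside $\Omega$ so the nonlocal tail of $u$ is controlled. The second step is \emph{interior oscillation decay}: on balls $B_{2r}(x_0)\subset\Omega$ one runs the De Giorgi iteration in its nonlocal form — carefully tracking the tail term $\tailg(u;x_0,r)$ which is finite and bounded thanks to Step 1 — to obtain $\osc_{B_r(x_0)}u\le C\big(\frac{r}{R}\big)^{\alpha_0}\big(\|u\|_{L^\infty}+R^{s}\|f\|_{L^\infty}^{1/(p-1)}\big)$ for some small $\alpha_0=\alpha_0(n,s,\lambda,\Lambda)>0$. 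This is essentially the argument of Di Castro–Kuusi–Palatucci adapted to the $g$-Laplacian, where the key is that $g$ being squeezed between multiples of $t^{p-1}$ lets one reproduce the energy estimates and the logarithmic lemma.

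The third and decisive step is the \emph{boundary barrier}. Using that $\partial\Omega\in C^{1,1}$, one constructs, locally near a boundary point and after flattening, a supersolution of the form $w(x)=c\,(d(x))^{s}$ (truncated appropriately) — or, if the exact power $s$ is not attainable in the Orlicz setting, a barrier $w(x)=c\,(d(x))^{\alpha}$ for a possibly smaller exponent $\alpha\in(0,s]$ — and checks, via a direct computation of $\gfls w$ using the ellipticity \eqref{L} and the one-dimensional model $t\mapsto t_+^s$, that $\gfls w\ge \|f\|_{L^\infty}$ in a boundary neighborhood while $w\ge u$ outside it; the comparison principle (valid since $g$ is nondecreasing by convexity) then gives $u(x)\le C\,d(x)^{\alpha}$, and symmetrically $|u(x)|\le C\,d(x)^{\alpha}$. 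The main obstacle is precisely this barrier computation: in the $p$-Laplacian case one exploits the explicit homogeneity of the kernel, whereas for a general convex $g$ one only has the two-sided bound \eqref{L}, so the estimate $\big|\gfls\big(d^s\big)\big|\le C$ must be extracted from comparison with the two pure powers $t^{p^--1}$ and $t^{p^+-1}$ sandwiching $g$, and one must verify that the resulting exponent $\alpha$ — which may be strictly less than $s$ when the two Orlicz exponents differ — is consistent across all boundary points; this is where the dependence of $\alpha$ on $\lambda,\Lambda$ enters. Finally, the fourth step \emph{combines} the interior decay of Step 2 with the boundary decay $|u|\le C d^\alpha$: for two points $x,y\in\overline\Omega$, if both are far from $\partial\Omega$ relative to $|x-y|$ one uses the interior estimate on a ball containing both; if one of them is within distance $\sim|x-y|$ of $\partial\Omega$ one uses the barrier bound together with $|u(x)|+|u(y)|\le C(d(x)^\alpha+d(y)^\alpha)\le C|x-y|^\alpha$; a standard covering argument then upgrades these to the uniform estimate $\|u\|_{C^\alpha(\overline\Omega)}\le C$ with the stated dependence of the constant, completing the proof.
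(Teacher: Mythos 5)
Your four-step skeleton --- global $L^\infty$ bound, interior oscillation decay, a barrier giving $|u|\le Cd^s$ near $\partial\Omega$, and gluing via a case distinction comparing $d(x),d(y)$ with $|x-y|$ --- matches the paper's architecture, and your combination step is essentially the paper's Section~5. The genuine gap is in your Step~3. You propose to extract $|\gfls d^s|\le C$ ``from comparison with the two pure powers $t^{p^--1}$ and $t^{p^+-1}$ sandwiching $g$.'' This cannot work: the pointwise bound $c_1 t^\lambda\le g(t)\le c_2 t^\Lambda$ does \emph{not} give an operator ordering between $\gfls$ and the fractional $p^\pm$-Laplacians, because the integrand $g\bigl(\frac{u(x)-u(y)}{|x-y|^s}\bigr)$ changes sign over the domain of integration (positive where $u(x)>u(y)$, negative where $u(x)<u(y)$), so one cannot uniformly replace $g$ by a power bound under the integral. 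What the paper actually does (Lemma~\ref{1dsol}) is an \emph{exact} computation: $\gfls(x_+^s)$ is split into four pieces $I_1,\dots,I_4$; the integrand of $I_3+I_4$ is recognized as the total derivative $\tfrac{1}{s}\bigl(G\bigl(\frac{1-t^s}{|1-t|^s}\bigr)\bigr)'$; and after integrating and adding $I_1=\tfrac{x^{-s}}{s}G(1)$, the pieces cancel exactly as $\varepsilon\to0$, yielding $\gfls(x_+^s)=0$ on $\{x>0\}$ for \emph{every} admissible $g$. This structural cancellation is invisible to the sandwich heuristic, and it is what forces the boundary exponent to be $s$ on the nose: Theorem~\ref{thm.bdry} gives $|u|\le Cd^s$, not $|u|\le Cd^\alpha$ for a smaller $\alpha$. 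Your hedge is misplaced --- the final H\"older exponent $\alpha<s$ in the theorem is inherited from the interior decay rate, not from any loss at the boundary.

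Two further methodological divergences are worth flagging, though they are not errors. For the $L^\infty$ bound you propose De Giorgi truncation together with a fractional Orlicz--Sobolev embedding; the paper (Lemma~\ref{lem.sol1} and Proposition~\ref{prop.bound}) instead compares $u$ with a scaled copy of the radial torsion-type solution of $\gfls v_0=1$ in a ball, a pure barrier argument that avoids invoking any embedding theorem. For the interior oscillation decay you propose a Di Castro--Kuusi--Palatucci De Giorgi iteration, which in the Orlicz setting would require re-deriving Caccioppoli-type energy estimates, a logarithmic lemma and a suitable isoperimetric step; the paper instead proves a weak Harnack inequality (Theorem~\ref{harnack}) by comparing $u$ with a bump-plus-truncation supersolution via Lemma~\ref{nonlocal.behavior} and the comparison principle (Proposition~\ref{compara}), and then iterates it to oscillation decay (Theorem~\ref{thm.osc}) while tracking the nonlocal tail explicitly. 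Your routes are plausible in outline but would take substantial extra infrastructure; the paper's routes are leaner because they lean at every turn on the comparison principle and the exact $(s,g)$-harmonicity of the $s$-power profile.
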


\medskip

\begin{rem}
In order to have the interior regularity, there is no need to require the so-called {\em complementary condition} $u=0$ in $\R^n\setminus\Omega$. As is customary in the nonlocal literature, it suffices for $u$ to verify $(-\Delta_g)^s u = f$ in $\Omega$ and some integrability conditions at infinity. See Section  3 (and Theorem \ref{thm.osc}) for a precise statement.
\end{rem}

\medskip

The strategy of the proof of Theorem \ref{thm.main} is divided in two steps:
\begin{enumerate}

\item first tackle the interior regularity of solutions. This is done by first proving an appropriate weak Harnack inequality and then an oscillation decay argument; 

\item the boundary regularity,  is achieved by constructing barrier functions that behaves like the distance function close to $\partial\Omega$; it is possible since the one dimensional profile $(x_n)_+^s$ satisfies $\gfls(x_n)_+^s=0$ so  straightening the boundary it gives that $\gfls d^s$ is bounded.

\end{enumerate}

We would like to point out that, in contrast with \cite{IMS}, the context of Orlicz spaces deals with nonhomogeneous operators; therefore, even if the overall strategy for proving regularity is the same as for the $p-$Laplacian (and for the fractional Laplacian for that matter), there are several nontrivial difficulties to overcome, see for instance the proof of Thereom \ref{thm.osc}.

The   paper is organized as follows: in Section \ref{sec.prel} we give   preliminary notions and definitions  used in the paper. In Section \ref{sec.wharn} we prove a weak Harnack inequality, from which an oscillation decay result  and the corresponding interior regularity estimates follow. In Section \ref{sec.bdry} we investigate the behavior near the boundary of solutions to \eqref{eq.dir} and   we prove that they are controlled by the distance function in  $\Omega$; putting together the results from Sections \ref{sec.wharn} and \ref{sec.bdry}, the global H\"older regularity follows and Theorem \ref{thm.main} is proven in Section \ref{sec.main}. Finally, in the Appendix we gather  the proof of some technical results  used in  the paper.   

\section{Preliminaries}\label{sec.prel}

\subsection{Young functions}\label{ssec.yfun}
An application $G\colon[0,\infty)\longrightarrow [0,\infty)$ is said to be a  \emph{Young function} if it admits the integral representation $G(t)=\int_0^t g(\tau)\,d\tau$, where the right continuous function $g$ defined on $[0,\infty)$ has the following properties:
\begin{align}
  &g(0)=0, \quad g(t)>0 \text{ for } t>0,   \label{g1} \tag{$g_1$} \\
&g \text{ is nondecreasing on } (0,\infty), \label{g2} \tag{$g_2$}\\
&\lim_{t\to\infty}g(t)=\infty. \label{g3} \tag{$g_3$}
\end{align}
From these properties it is easy to see that a Young function $G$ is continuous, nonnegative, strictly increasing and convex on $[0,\infty)$. Further, we recall that we may extend $g$ to the whole $\R$ in an odd fashion: for $t<0$ $g(t)=-g(-t)$.
 
We will consider the class of Young functions such that $g=G'$ is an absolutely continuous function that satisfies the condition 
\begin{equation} \label{L} 
1<\lambda \leq \frac{tg'(t)}{g(t)}\leq \Lambda<\infty.
\end{equation}

By a simple integration,   condition \eqref{L} implies that $G$ verifies
\begin{equation}\label{eq.p}
2<p^-\leq  \frac{tg(t)}{G(t)} \leq p^+<\infty,
\end{equation}
where $p^-=\lambda+1$ and $p^+=\Lambda+1$. In \cite[Theorem 4.1]{KR} it is shown that the upper bound in \eqref{L} (or in \eqref{eq.p}) is equivalent to the so-called \emph{$\Delta_2$ condition (or doubling condition)}, namely
\begin{equation} \label{delta2} \tag{$\Delta_2$}
g(2t) \leq 2^{\Lambda} g(t), \qquad G(2t)\leq 2^{p^+} G(t) \qquad t\geq 0.
\end{equation}
Further, since \eqref{L} implies $G''(t)>0$ we have that $G$ is a convex function. For for our purposes we will make the further assumption on $g$:
\begin{equation} \label{g4} \tag{$g_4$}
g \text{ is a convex function on } (0,\infty),
\end{equation}
which is analogous of  the degenerate case $p\ge 2$ for the fractional $p-$Laplacian. Throughout the paper, a constant will be called \emph{universal} if it depends only on $n,s,\lambda$ and $\Lambda$. Also, from now on,  assumption \eqref{g4} will be enforced. The convexity of $G$ and $g$ gives the following (see \cite[Lemma 2.1]{FBPS}):
\begin{lem}
For $\alpha\in[0,1]$ and  $t\geq 0$ it holds  that $G(\alpha t) \leq \alpha G(t)$, $g(\alpha t) \leq \alpha g(t)$; for $\alpha\geq 1$ and  $t\geq 0$ it holds that $G(\alpha t)\geq \alpha G(t)$, $g(\alpha t)\geq \alpha g(t)$. More generally,  for any, $\alpha,t\geq 0$
\begin{equation}\label{minmax1} 
\min\{\alpha^\lambda, \alpha^\Lambda\} g(t) \leq g(\alpha t) \leq \max\{\alpha^\lambda, \alpha^\Lambda\}g(t),
\end{equation}
\begin{equation}\label{minmax2} 
G(t)\min\{\alpha^{p^-}, \alpha^{p^+}\} \leq G(\alpha t) \leq G(t)\max\{\alpha^{p^-}, \alpha^{p^+}\}.
\end{equation}
\end{lem}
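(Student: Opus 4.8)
The plan is to prove the lemma directly from the two defining features of our setting: the convexity of $G$ (and of $g$, by \eqref{g4}) and the two–sided bound \eqref{L}, which has already been shown to yield \eqref{eq.p}. The first two pairs of inequalities are pure convexity statements, so I would dispose of them first; the genuinely quantitative content is the two displays \eqref{minmax1} and \eqref{minmax2}, and those I would extract from \eqref{L} and \eqref{eq.p} by a logarithmic–differentiation argument.

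\medskip

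First I would observe that since $G$ is convex on $[0,\infty)$ with $G(0)=0$, the map $t\mapsto G(t)/t$ is nondecreasing on $(0,\infty)$; equivalently $G(\alpha t)/(\alpha t)\le G(t)/t$ whenever $0\le\alpha\le 1$, which rearranges to $G(\alpha t)\le\alpha G(t)$, and the reverse inequality for $\alpha\ge 1$ follows by applying the first one with $\alpha^{-1}$ and $\alpha t$ in place of $\alpha$ and $t$. The same reasoning applied to $g$ — which is convex by \eqref{g4} and satisfies $g(0)=0$ by \eqref{g1} — gives $g(\alpha t)\le\alpha g(t)$ for $\alpha\in[0,1]$ and $g(\alpha t)\ge\alpha g(t)$ for $\alpha\ge 1$. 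This settles the first four inequalities.

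\medskip

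For \eqref{minmax1}, fix $t>0$ and consider $\varphi(\alpha):=g(\alpha t)$ for $\alpha>0$; then $\frac{d}{d\alpha}\log\varphi(\alpha)=\frac{t g'(\alpha t)}{g(\alpha t)}$, which by \eqref{L} lies in $[\lambda,\Lambda]$ for every $\alpha>0$. Integrating this differential inequality between $1$ and $\alpha$ (treating the cases $\alpha\ge 1$ and $0<\alpha<1$ according to the sign of $\log\alpha$) yields $\min\{\alpha^\lambda,\alpha^\Lambda\}\le \varphi(\alpha)/\varphi(1)\le\max\{\alpha^\lambda,\alpha^\Lambda\}$, i.e. exactly \eqref{minmax1}; the endpoint case $\alpha=0$ holds trivially since $g(0)=0$. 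The identical argument with $\psi(\alpha):=G(\alpha t)$, using $\frac{d}{d\alpha}\log\psi(\alpha)=\frac{\alpha t\, g(\alpha t)}{G(\alpha t)}\in[p^-,p^+]$ from \eqref{eq.p}, gives \eqref{minmax2}. (One small point of care: \eqref{L} is an a.e.\ statement since $g$ is only absolutely continuous, so the integration of $\log\varphi$ should be justified via absolute continuity of $g$ rather than a naive use of the fundamental theorem of calculus; this is the only place any real attention is needed.)

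\medskip

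The main obstacle is therefore not conceptual but bookkeeping: keeping the direction of the inequalities straight across the four sub-cases $\{0\le\alpha\le 1,\ \alpha\ge 1\}\times\{g,G\}$, and making sure the $\min/\max$ over $\{\alpha^\lambda,\alpha^\Lambda\}$ (resp.\ $\{\alpha^{p^-},\alpha^{p^+}\}$) is correctly oriented — for $\alpha\ge 1$ the $\Lambda$–power dominates, for $\alpha\le 1$ the $\lambda$–power does. Since $\lambda<\Lambda$ and $p^-<p^+$, in each regime the two exponential bounds order themselves consistently, so the unified $\min/\max$ formulation holds for all $\alpha,t\ge 0$ as stated. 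I expect the whole proof to be half a page.
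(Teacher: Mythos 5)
Your convexity argument for the first four inequalities is correct (the monotonicity of $t\mapsto G(t)/t$ and $t\mapsto g(t)/t$, which follows from convexity plus vanishing at the origin, and the reflection trick for $\alpha\ge 1$), and the overall strategy for \eqref{minmax1}--\eqref{minmax2} (integrate the differential inequality implied by \eqref{L} and \eqref{eq.p}) is the standard one. The paper itself gives no proof here --- it simply cites \cite[Lemma~2.1]{FBPS} --- so there is no ``paper route'' to compare against; yours is a natural argument.

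There is, however, a computational slip in the derivation of \eqref{minmax1} and \eqref{minmax2}. You correctly compute $\tfrac{d}{d\alpha}\log\varphi(\alpha)=\tfrac{t\,g'(\alpha t)}{g(\alpha t)}$, but then claim this lies in $[\lambda,\Lambda]$. Condition \eqref{L} evaluated at $\alpha t$ gives $\lambda\le \tfrac{(\alpha t)\,g'(\alpha t)}{g(\alpha t)}\le\Lambda$, so what actually holds is $\tfrac{\lambda}{\alpha}\le \tfrac{d}{d\alpha}\log\varphi(\alpha)\le\tfrac{\Lambda}{\alpha}$. If one literally integrated a bound $\tfrac{d}{d\alpha}\log\varphi\in[\lambda,\Lambda]$ from $1$ to $\alpha$, one would obtain $e^{\lambda(\alpha-1)}\le\varphi(\alpha)/\varphi(1)\le e^{\Lambda(\alpha-1)}$, not the power-law bounds. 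The correct statement is that $\alpha\,\tfrac{d}{d\alpha}\log\varphi(\alpha)=\tfrac{d}{d\log\alpha}\log\varphi(\alpha)\in[\lambda,\Lambda]$; integrating this in the variable $\log\alpha$ (or, equivalently, integrating $\tfrac{\lambda}{\sigma}\le\tfrac{d}{d\sigma}\log\varphi(\sigma)\le\tfrac{\Lambda}{\sigma}$ over $\sigma\in[1,\alpha]$ or $[\alpha,1]$) does yield $\lambda\log\alpha$ and $\Lambda\log\alpha$ as bounds for $\log\bigl(\varphi(\alpha)/\varphi(1)\bigr)$, hence the stated $\min/\max$ of $\alpha^\lambda,\alpha^\Lambda$. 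The $G$-case has the mirror-image confusion: you wrote $\tfrac{d}{d\alpha}\log\psi(\alpha)=\tfrac{\alpha t\,g(\alpha t)}{G(\alpha t)}$, which has a spurious extra factor of $\alpha$; the correct derivative is $\tfrac{t\,g(\alpha t)}{G(\alpha t)}\in[p^-/\alpha,\,p^+/\alpha]$, and again it is the derivative with respect to $\log\alpha$ that lies in $[p^-,p^+]$. The conclusions you draw are correct, and your remark about justifying the integration via absolute continuity of $g$ is well taken, but the intermediate displays as written are false and need the missing factor of $\alpha$ inserted (or the derivative taken in the $\log\alpha$ variable) before the argument is airtight.
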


\subsection{Fractional Orlicz-Sobolev spaces}

Recall that for a Young function $G$ the Orlicz spaces are given by
\[
L^G(\Omega):=\left\{u:\Omega\longrightarrow\R,\text{ measurable}\colon \int_\Omega G(u(x))\:dx<\infty\right\}.
\] 
These are Banach spaces under the assumption that condition \eqref{delta2} holds.

Given $s\in (0,1)$ we will work with the fractional Orlicz-Sobolev spaces
$$
W^{s,G}(\Omega):=\left\{ u\in L^G(\Omega)\colon D_s u \in L^G (\Omega\times\Omega, d\mu)\right\}
$$
where $d\mu=|x-y|^{-n}\,dxdy$ and the  $s-$H\"older quotient is denoted as
\begin{equation}\label{eq.shold}
D_s u(x,y):=\frac{u(x)-u(y)}{|x-y|^s}.
\end{equation}

Over the  space $W^{s,G}(\Omega)$ we define the Luxemburg type norm 
\begin{equation}\label{eq.norm}
\|u\|_{s,G,\Omega} := \|u\|_{G,\Omega} + [u]_{s,G,\Omega},
\end{equation}
where
\begin{align*}
\|u\|_{G,\Omega} & :=\inf\left\{ \lam>0\colon \int_\Omega G\left(\frac{u(x)}{\lam}\right)\,dx \leq 1\right\} \\
[u]_{s,G,\Omega} & :=\inf\left\{\lambda>0\colon \iint_{\Omega\times \Omega} G\left(\frac{D_s u}{\lambda}\right)\,d\mu \le 1\right\}.
\end{align*}

Given $\Omega\subset \R^n$ bounded we define
$$
\widetilde W^{s,G}(\Omega):=\left\{u\in L^G_{loc}(\R^n) \colon \exists U\supset\supset \Omega \text{ s.t. } \|u\|_{s,G,U}+ \int_{\R^n} g\left(\frac{|u(x)|}{(1+|x|)^s}\right) \frac{dx}{(1+|x|)^{n+s}} <\infty \right\}.
$$

If $\Omega$ is unbounded
$$
\widetilde W_{\text{loc}}^{s,G}(\Omega):=\{u\in L^G_{loc}(\R^n) \colon u\in \widetilde W^{s,G}(\Omega') \text{ for any bounded }\Omega'\subset \Omega\}.
$$

Observe that, when $u\in L^\infty(\R^n)$
\begin{align*}
\int_{\R^n} g\left(\frac{|u(x)|}{(1+|x|)^s}\right) \frac{dx}{(1+|x|)^{n+s}} &
\leq C(\|u\|_\infty)
\int_0^\infty g\left(\frac{1}{(1+r)^s}\right) \frac{dr}{(1+r)^{1+s}}\\
&\leq C(\|u\|_\infty) G(1)
\int_0^\infty  (1+r)^{-s-1}\,dr <\infty,
\end{align*}
where we have used \eqref{delta2}, \eqref{L} and \eqref{g2}. Observe that a similar estimate can be obtained for functions that are not bounded but grow in a controlled way. For instance, a similar bound is obtained if $u$ verifies
$$
|u(x)|\le C (1+|x|)^{s+a}, \quad \text{for } a<\frac{s}{\Lambda}.
$$


Further, we define the spaces 
$$
W^{s,G}_0(\Omega)  :=\left\{u\in W^{s,G}(\R^n)\colon u=0\text{ in }\Omega^c\right\},\qquad 
W^{-s,\tilde{G}}(\Omega)  :=\left(W^{s,G}_0(\Omega)\right)^\ast
$$
where $\tilde G$ denotes the complementary Young function  of $G$ is defined as
\begin{equation}\label{eq.comp}
\tilde G(t):=\sup\{tw -G(w)\colon w>0\}.
\end{equation}
	
For all measurable $u\colon \R^n \to \R$ the \emph{nonlocal tail} centered at $x\in \R^n$   is defined as 
$$
\tailg(u;x,1) = g^{-1}\left( \int_{B_1^c(x)} g\left(  \frac{u(y)}{|x-y|^s}\right) \frac{dy}{|x-y|^{n+s}}\right),
$$
$$
\tailg(u;x,R) =  g^{-1}\left( R^s \int_{B_R^c(x)}   g\left(R^s  \frac{u(y)}{|x-y|^s}\right) \frac{dy}{|x-y|^{n+s}}\right).
$$

When $x=0$ we just write $\tailg(u;x,R)=\tailg(u;R)$. Moreover, then $G(t)=t^p$ for some $p\geq 2$ we write 
$$
\tailp(u;x,R)=\left( R^{sp} \int_{B_R^c(x)} \frac{|u(y)|^{p-1}}{|x-y|^{n+sp}}\,dy \right)^\frac{1}{p-1}
$$
and $\tailp(u;x,R)=\tailp(u;R)$ when $x=0$.

\subsection{Notions of solutions}

In this section we give the appropriate notions of solutions that will be used. We start with the definition of weak solution.
\begin{defi}\label{def.wsol}
Let $\Omega$ be a domain in $\R^n$, $u\in \W^{s,G}(\Omega)$ and $f\in W^{-s,\tilde{G}}(\Omega)$. We say that $u$ is a weak subsolution of $\gfls u=f$ in $\Omega$ if for any test function $\varphi\in W_0^{s,G}(\Omega)$ satisfying $\varphi\geq0$ a.e. in $\Omega$ there holds
\begin{equation}\label{eq.wsubsol}
\intr  g\left(\frac{u(x)-u(y)}{|x-y|^s}\right)\frac{\varphi(x)-\varphi(y)}{|x-y|^s}\,d\mu  \leq \int_{\Omega}f\varphi\,dx.
\end{equation}
We say that $u$ is a supersolution if $-u$ is a subsolution and that $u$ is a solution if it is both a sub and a supersolution. In particular, if $u$ is a solution
\begin{equation}\label{eq.wsol}
\intr  g\left(\frac{u(x)-u(y)}{|x-y|^s}\right)\frac{\varphi(x)-\varphi(y)}{|x-y|^s}\,d\mu=\int_{\Omega}f\varphi\,dx
\end{equation}
any test function $\varphi\in W_0^{s,G}(\Omega)$.
 
If $\Omega$ is unbounded, $u\in \widetilde{W}_{\text{loc}}^{s,G}(\Omega)$ is a weak subsolution, supersolution or solution of $\gfls u=f$ if \eqref{eq.wsubsol}-\eqref{eq.wsol} hold for any bounded open set $\widetilde{\Omega}\subset\Omega$. 
\end{defi}

Lemma \ref{lem.buenadefi} implies that the integrals in \eqref{eq.wsubsol} and \eqref{eq.wsol} are well defined.

Next, we define pointwise and strong solutions:
\begin{defi}\label{def.stsol}
Let $\Omega$ be a domain in $\R^n$, $u\in \widetilde{W}_{\text{loc}}^{s,G}(\Omega)$ and $f$ a measurable function in $\Omega$. We say that $u$ is a pointwise subsolution of $\gfls u=f$ in $\Omega$ if the limit 
\begin{equation}\label{eq.psol}
\limsup_{\varepsilon\rightarrow 0^+}2\int_{B^c_\varepsilon(x)} g\left(\frac{u(x)-u(y)}{|x-y|^s}\right) \frac{dy}{|x-y|^{n+s}}\leq f(x)
\end{equation}
for almost every Lebesgue point of $u$ in $\Omega$ (in particular, a.e. in $\Omega$). We say that $u$ is a pointwise supersolution if $-u$ is a subsolution and that $u$ is a solution if it is both a sub and a supersolution. 

Moreover, if $f\in L_{\text{loc}}^1(\Omega)$ we say that $u$ is a strong subsolution of $\gfls u=f$ if the limit \eqref{eq.psol} holds in $L^1_{\text{loc}}(\Omega)$. Analogous definitions hold for strong supersolution and solutions. 
\end{defi}

\section{Weak Harnack and H\"older regularity}\label{sec.wharn}

In this section we prove a weak Harnack inequality and, as a consequence, get interior $C^\alpha$ estimates for solutions of \eqref{eq.dir} (see Corollary \ref{cor.hold}). 

\begin{lem} \label{lema.1}
Let $\varphi \in C^2(\R^n)$ such that $\|\varphi\|_{C^2(\R^n)}<\infty$. Then 
$$
|(-\Delta_g)^s \varphi(x)| \leq K,
$$
where $K$ is a (computable) positive constant depending on $\|\varphi\|_{C^2(\R^n)}$, $n$, $s$, $g$ and $g'$.
\end{lem}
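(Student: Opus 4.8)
The plan is to estimate the principal value integral defining $\gfls\varphi(x)$ by splitting $\R^n$ into a near region $B_1(x)$ and a far region $B_1^c(x)$, and handling each piece using the structural growth bounds on $g$ from \eqref{growthg1} together with the $C^2$ regularity of $\varphi$.

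First I would write
\[
\gfls\varphi(x) = \text{p.v.}\int_{B_1(x)} g\!\left(\frac{\varphi(x)-\varphi(y)}{|x-y|^s}\right)\frac{dy}{|x-y|^{n+s}} + \int_{B_1^c(x)} g\!\left(\frac{\varphi(x)-\varphi(y)}{|x-y|^s}\right)\frac{dy}{|x-y|^{n+s}} =: I_1 + I_2,
\]
which is legitimate since $\varphi\in C^2$ guarantees the principal value exists. For the far piece $I_2$, since $|\varphi(x)-\varphi(y)|\le 2\|\varphi\|_{L^\infty}$ and $|x-y|\ge 1$, the argument of $g$ is bounded by $2\|\varphi\|_{L^\infty}$; using monotonicity \eqref{g2} and then \eqref{growthg1} (or simply $g$ nondecreasing) we bound $|I_2|\le g(2\|\varphi\|_{L^\infty})\int_{B_1^c(x)}|x-y|^{-n-s}\,dy = C(n,s)\,g(2\|\varphi\|_\infty)$, which is finite. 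For the near piece $I_1$, I would exploit cancellation: pairing the points $y$ and $2x-y$ symmetrically about $x$, the odd extension of $g$ and a second-order Taylor expansion of $\varphi$ give, for $y\in B_1(x)$,
\[
\left|g\!\left(\tfrac{\varphi(x)-\varphi(y)}{|x-y|^s}\right) + g\!\left(\tfrac{\varphi(x)-\varphi(2x-y)}{|x-y|^s}\right)\right| \le \|g'\|_{L^\infty([-M,M])}\cdot \frac{|\varphi(y)+\varphi(2x-y)-2\varphi(x)|}{|x-y|^s} \le C\,\|g'\|_{\infty}\,\|\varphi\|_{C^2}\,|x-y|^{2-s},
\]
where $M$ depends on $\|\varphi\|_{C^2}$ (the first-difference quotient $\tfrac{|\varphi(x)-\varphi(y)|}{|x-y|^s}\le \|\D\varphi\|_\infty |x-y|^{1-s}\le \|\varphi\|_{C^2}$ on $B_1$). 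Integrating against $|x-y|^{-n-s}$ over the half-ball and using $2-s > 0$ gives $\int_{B_1(x)}|x-y|^{2-s-n-s}\,dy = C(n,s) < \infty$, so $|I_1|\le C(n,s)\,\|g'\|_{\infty,[-M,M]}\,\|\varphi\|_{C^2}$.

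Combining, $|\gfls\varphi(x)|\le I_1 + |I_2| \le C(n,s)\big(\|g'\|_{\infty,[-M,M]}\|\varphi\|_{C^2} + g(2\|\varphi\|_\infty)\big)=:K$, a constant depending only on $n,s,g,g'$ and $\|\varphi\|_{C^2}$, as claimed; note the bound is uniform in $x$ because all estimates depend on $\varphi$ only through $\|\varphi\|_{C^2(\R^n)}$.

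The main obstacle is making the cancellation argument for $I_1$ fully rigorous near the singularity: one must justify the principal value symmetrization (that $\lim_{\ve\to0}\int_{B_1(x)\setminus B_\ve(x)}$ equals $\tfrac12\int_{B_1(x)}$ of the symmetrized integrand) and control $g'$ on a compact interval — this is where absolute continuity of $g$ and the fact that $g'$ is locally bounded (which follows from \eqref{L}, since $g'(t)\le \Lambda g(t)/t$ is locally bounded away from and up to $t=0$ given $g(t)\lesssim t^\lambda$) come in. A secondary point is bookkeeping the dependence of the constant on $g$ and $g'$ precisely, but since $\|\varphi\|_{C^2}$ controls the relevant range of arguments, this is routine once the splitting is in place.
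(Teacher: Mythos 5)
Your proof is correct and follows essentially the same strategy as the paper: split into a near ball $B_1(x)$ and a far region, bound the far piece by $g(2\|\varphi\|_\infty)$ using monotonicity, and for the near piece symmetrize the integrand across $x$ so that the oddness of $g$ and a mean-value estimate on $g$ (with $g'$ controlled by convexity \eqref{g4}) reduce the bound to the second difference $|\varphi(y)+\varphi(2x-y)-2\varphi(x)|\le \|D^2\varphi\|_\infty|x-y|^2$, which is integrable against $|x-y|^{-n-s}$. The paper packages the mean-value step via an auxiliary function $\psi(t)$ and the fundamental theorem of calculus, and invokes the monotonicity of $g'$ to write the constant as $g'(2\|\nabla\varphi\|_\infty)$ rather than $\|g'\|_{L^\infty([-M,M])}$, but these are the same estimate; the ``obstacle'' you flag (justifying the symmetrization at the level of the truncated integrals) is immediate from the change of variables $y\mapsto 2x-y$ on each annulus $\ve<|x-y|<1$ and is handled in exactly that way in the paper.
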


\begin{proof}
Let $\varphi\in C^2(\R^n)\cap L^\infty(\R^n)$ be fixed.
Denote 
$$
D_z \varphi(x):=\varphi(x)-\varphi(x+z),\qquad D_z^2 \varphi(x):= \varphi(x+z) -2\varphi(x) +\varphi(x-z).
$$
Let us write
\begin{align*}
(-\Delta_g)^s \varphi(x)&= 2 \left( \lim_{\ve \to 0^+} \int_{\ve<|x-y|<1} + \int_{|x-y|\geq 1} \right) g\left( \frac{\varphi(x)-\varphi(y)}{|x-y|^s} \right)   \frac{dy}{|x-y|^{n+s}}\\
&:=2 \lim_{\ve \to 0^+} (a_\ve)+(b).
\end{align*}
(The smoothness and boundedness of $\varphi$ already make $(-\Delta_g)^s \varphi(x)$ well defined.)

Taking into account that $g$ is nondecreasing we readily get that
$$
(b)\leq g(2\|\varphi\|_\infty) \int_{|x-y|\geq 1}|x-y|^{-n-s}\,dy =\frac{n\omega_n}{s} g(2\|\varphi\|_\infty).
$$
Expression $(a_\ve)$ can be rewritten as
\begin{align} \label{eq.1}
\begin{split}
(a_\ve)&= \int_{\ve<|z|<1}  g\left( \frac{D_z \varphi(x)}{|z|^s} \right)    \frac{dz}{|z|^{n+s}}= \int_{\ve<|z|<1}  g\left( \frac{D_{-z}\varphi(x)}{|z|^s} \right)    \frac{dz}{|z|^{n+s}}\\
&= \frac{1}{2}   \int_{\ve<|z|<1}  \left( g\left( \frac{D_z \varphi(x)}{|z|^s} \right) +  g\left( \frac{D_{-z} \varphi(x)}{|z|^s} \right)     \right) \frac{dz}{|z|^{n+s}}\\
&=-\frac12 \int_{\ve<|z|<1} (\psi(1)-\psi(0))\frac{dz}{|z|^{n+s}}=-\frac12 \int_{\ve<|z|<1}  \int_0^1 \psi'(t)\,dt \frac{dz}{|z|^{n+s}}\\
\end{split}
\end{align}
where we have denoted
$$
\psi(t)=   g\left( \frac{ (1-t) D_{z}\varphi(x) - t D_{-z} \varphi(x) }{|z|^s} \right).
$$
Since the derivative of $\psi$ can be computed to be 
$$
\psi'(t)=g'\left( \frac{ (1-t)D_{z}\varphi(x) - t D_{-z} \varphi(x)) }{|z|^s}  \right)  \cdot \frac{D_z^2 \varphi(x)}{|z|^s},
$$
and since we have $|D_z \varphi|\leq \| \nabla \varphi\|_\infty |z|\:, |D_z^2 \varphi| \leq \|D^2 \varphi\|_\infty |z|^2$ we get
$$
|\psi'(t)| \leq g'(2\|\nabla \varphi\|_\infty |z|^{1-s}) \|D^2 \varphi\|_\infty |z|^{2-s},
$$
where we have used that $g'$  increasing due to \eqref{g4}.
Then, expression \eqref{eq.1} can be bounded as follows
\begin{align*}
(a_\ve) &\leq \int_{\ve<|z|<1} g'(2\|\nabla \varphi\|_\infty |z|^{1-s}) \|D^2 \varphi\|_\infty |z|^{2-n-2s}\,dz\\ 
&\leq  g'(2\|\nabla \varphi\|_\infty) \|D^2 \varphi\|_\infty  n \omega_n\int_\varepsilon^1 r^{1-2s}\:ds = g'(2\|\nabla \varphi\|_\infty) \|D^2 \varphi\|_\infty  n \omega_n\frac{1-\ve^{2(1-s)}}{2(1-s)}.
\end{align*}

Finally, combining the bounds for $(a_\ve)$ and $(b)$ we get that
$$
|(-\Delta_g)^s \varphi(x)| \leq \frac{2n\omega_n}{s} g(2\|\varphi\|_\infty) +\frac{n \omega_n}{1-s} g'(2\|\nabla \varphi\|_\infty) \|D^2 \varphi\|_\infty  
$$
as desired.
\end{proof}

We are now in position to prove our weak Harnack inequality.

%
 
\begin{thm}[Weak Harnack inequality] \label{harnack}
If $u\in \W^{s,G}(B_{R/3})$ satisfies weakly 
\begin{align} \label{eq.w.harnack}
\begin{cases}
(-\Delta_g)^s u \geq -K &\quad \text{ in } B_{R/3}\\
u\geq 0 &\quad \text{ in } \R^n
\end{cases}
\end{align}
for some $K\geq 0$, then there exists universal $\sigma\in(0,1)$, and  $C_0>0$  such that
$$
\inf_{B_{R/4}} u \geq \sigma R^sg^{-1}\left( \pint_{B_R \setminus B_{R/2}} g(R^{-s}|u|)\,dx \right) - R^sg^{-1}\left(C_0 K\right).
$$
\end{thm}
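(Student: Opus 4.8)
The plan is to adapt to the Orlicz scale the De Giorgi--type machinery behind the weak Harnack inequality for the fractional $p$--Laplacian, taking advantage of two features special to the present setting: the hypothesis $u\ge 0$ \emph{in all of} $\R^{n}$, and the convexity \eqref{g4}. First I would reduce to $R=1$: writing $v(x):=R^{-s}u(Rx)$, one checks from \eqref{eq.gfls} that $\gfls v(x)=R^{s}\gfls u(Rx)$, so that $v\in\W^{s,G}(B_{1/3})$ is a weak supersolution of $\gfls v\ge -R^{s}K$ in $B_{1/3}$ with $v\ge 0$ in $\R^{n}$; since moreover $\inf_{B_{R/4}}u=R^{s}\inf_{B_{1/4}}v$ and $\pint_{B_{R}\setminus B_{R/2}}g(R^{-s}|u|)\,dx=\pint_{B_{1}\setminus B_{1/2}}g(|v|)\,dx$, it suffices to prove the inequality for $R=1$, and from now on $K$ stands for $R^{s}K$. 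Next, since adding a constant does not change $\gfls$, the function $w:=v+d$ is again a weak supersolution with the same constant $K$ and satisfies $w\ge d>0$ everywhere; the parameter $d:=g^{-1}(C_{0}K)$ is introduced to absorb the right-hand side, and the theorem will follow from the clean estimate
\[
\inf_{B_{1/4}}w\ \ge\ \sigma\, g^{-1}\!\Big(\pint_{B_{1}\setminus B_{1/2}}g(w)\,dx\Big),
\]
since $g(w)\ge g(v)$ on the annulus and $\inf v=\inf w-d$. It is essential here that $v\ge 0$ on all of $\R^{n}$, not just in $B_{1/3}$: this forces $\tailg(v_{-};x,r)\equiv 0$, which is why the right-hand side carries no extra nonlocal tail term and the only defect is the constant $K$.

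The engine is a Caccioppoli (energy) inequality together with a logarithmic estimate for the nonnegative supersolution $w$, both of Orlicz type, obtained by testing the weak supersolution inequality with $\varphi=(\ell-w)_{+}\eta^{p^{+}}$ and with $\varphi=\eta^{p^{+}}/g(w)$, respectively, for a smooth cutoff $\eta$ supported in a ball $B''$ with $B_{1/4}\subset B''\subset B_{1/3}$ (suitable Orlicz versions of the corresponding fractional $p$--Laplacian estimates). The convexity \eqref{g4}, used with \eqref{L} through \eqref{minmax1}, \eqref{minmax2} and \eqref{delta2}, plays exactly the role that $p\ge 2$ plays for the fractional $p$--Laplacian: it lets one absorb the various error terms and bound the \emph{long-range} contribution
\[
\int_{B''}\varphi(x)\!\int_{(B'')^{c}}g\!\Big(\tfrac{w(x)-w(y)}{|x-y|^{s}}\Big)\frac{dy}{|x-y|^{n+s}}\,dx
\]
from the right-hand side in terms of $g$--integrals of $w$ over $(B'')^{c}$. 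The portion of this term with $y\in B_{1}\setminus B_{1/2}$ is the one of interest: since $\operatorname{dist}(B_{1/4},B_{1}\setminus B_{1/2})\sim 1$, the kernel factors $|x-y|^{-n-s}$ are bounded above and below there, so that portion contributes a quantity comparable to $\pint_{B_{1}\setminus B_{1/2}}g(w)\,dy$ (against a harmless weight), while the remaining, nonnegative contributions only improve the bound and can be discarded. The choice $g(d)=C_{0}K$ makes the $-K$ right-hand side harmless: weighted against $1/g(w)\le 1/g(d)$ it contributes at most $K\,|B_{1/3}|/g(d)=|B_{1/3}|/C_{0}$, a universal constant, and in the De Giorgi iteration below it is a lower-order perturbation once the truncation levels exceed $d$.

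From these estimates I would run a De Giorgi iteration for the lower bound --- on the truncations $(\ell_{j}-w)_{+}$ along decreasing levels $\ell_{j}$ and shrinking balls between $B_{1/4}$ and $B_{1/3}$, closed by a fractional Orlicz--Sobolev embedding, with the logarithmic estimate (via a fractional Orlicz--Poincar\'e inequality and John--Nirenberg) used to bridge between positive and negative moments of $w$ --- exactly as in the degenerate range $p\ge 2$ for the fractional $p$--Laplacian, where again \eqref{g4} is what makes the iteration converge. Because the Caccioppoli inequality carries the long-range term, and that term records the annular $g$--average by the computation above (using \eqref{minmax1} to compare $g$ evaluated at $w$ with $g$ evaluated at $w/|x-y|^{s}$ on the annulus, where $|x-y|\sim 1$), the iteration delivers the pointwise bound
\[
\inf_{B_{1/4}}w\ \ge\ \sigma\, g^{-1}\!\Big(\pint_{B_{1}\setminus B_{1/2}}g(w)\,dx\Big)
\]
with $\sigma$ universal. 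Undoing the two reductions of the first paragraph yields the statement; since every constant produced along the way is explicit, so is $C_{0}$.

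The step I expect to be the main obstacle is carrying the whole iteration through in the Orlicz scale. Because $g$ and $g^{-1}$ are not homogeneous, every power-counting computation of the classical fractional $p$--Laplacian proof --- the Caccioppoli and logarithmic estimates, the Poincar\'e and Sobolev embeddings, the self-improvement of integrability in the iteration --- must be rewritten using the convex-function bounds \eqref{growthg1}, \eqref{growthg2}, \eqref{minmax1} and \eqref{minmax2}, keeping track of all constants in terms of $\lambda$ and $\Lambda$; and the long-range term must be controlled precisely enough that, after the iteration, it produces exactly the $g$--average of $w$ over $B_{1}\setminus B_{1/2}$, and not merely an $L^{\varepsilon}$-average or a power of it. The convexity assumption \eqref{g4} is precisely what makes these manipulations go through, in full analogy with the degenerate range $p\ge 2$ for the fractional $p$--Laplacian.
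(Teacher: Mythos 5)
Your route is genuinely different from the paper's. You propose to rebuild the Di Castro--Kuusi--Palatucci De Giorgi machinery in the Orlicz scale: Caccioppoli and logarithmic estimates with Orlicz test functions, a fractional Orlicz--Sobolev/Poincar\'e embedding, a John--Nirenberg crossover, and then a truncation iteration. The paper instead uses a short barrier-and-comparison argument: it sets $L:=g^{-1}\bigl(\pint_{B_1\setminus B_{1/2}}g(|u|)\,dx\bigr)$, builds the explicit competitor $w=\sigma L\varphi+u\chi_{B_1\setminus B_{1/2}}$ (with $\varphi$ a fixed cutoff equal to $1$ on $B_{1/4}$ and supported in $B_{1/3}$), and computes $\gfls w$ in $B_{1/3}$ via Lemma~\ref{nonlocal.behavior}. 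The localized piece contributes $\lesssim \sigma g(L)$ by Lemma~\ref{lema.1}, while the annular piece contributes $\lesssim -C_2 g(L)$ by Lemma~\ref{lema.0}; choosing $\sigma$ small makes $\gfls w\le -\tfrac{C_2}{2}g(L)\le -K$ unless $g(L)\le 2K/C_2$, in which case the conclusion is trivial. Since $w\le u$ in $B_{1/3}^c$ (this uses $u\ge 0$ in $\R^n$), the comparison principle of Proposition~\ref{compara} gives $u\ge w\ge\sigma L$ in $B_{1/4}$. No energy or iteration estimates are needed, and the $g$-average over the annulus appears \emph{by construction} of the barrier.

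The concrete gap in your sketch is the claim that the De Giorgi iteration will deliver the lower bound in terms of $g^{-1}\bigl(\pint_{B_1\setminus B_{1/2}}g(w)\,dx\bigr)$. The standard output of a De Giorgi--John--Nirenberg weak Harnack is an $L^q$-average of $w$ over a ball, for some small universal $q>0$; by Jensen, $g^{-1}\bigl(\pint g(w)\bigr)\ge\pint w\ge\bigl(\pint w^q\bigr)^{1/q}$ is a \emph{larger} quantity, so the statement you are after is strictly stronger and would need a separate upgrade (in the fractional $p$-Laplacian literature this already costs an extra argument to pass from a small $L^q$-average to an $L^{p-1}$-average). Moreover the average over the \emph{annulus} $B_1\setminus B_{1/2}$, which the paper needs verbatim in the oscillation-decay step (Theorem~\ref{thm.osc}, Step~2), is not what a De Giorgi iteration centered in $B_{1/3}$ naturally produces; you assert that the long-range term in the Caccioppoli inequality ``records the annular $g$-average,'' but that term, once fed into the iteration, controls measures of sublevel sets rather than surviving verbatim into the final estimate. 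Your plan is in spirit a valid (and much longer) route to \emph{a} weak Harnack inequality, but as written it does not clearly produce the particular form of the estimate needed here, whereas the barrier construction yields it directly.
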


\begin{proof}
We prove   the result for $R=1$ and then get \eqref{eq.w.harnack} by scaling from Lemma \ref{lem.scaling}.

Let $\varphi\in C^\infty(\R^n)$ be such that $0\leq \varphi\leq 1$ in $\R^n$, $\varphi=1$ in $B_{1/4}$ and $\varphi=0$ in $B_{1/3}^c$. Then $\|\varphi\|_\infty\leq 1$ and we can assume that $\|\nabla\varphi\|_\infty\leq 2$.  Then, in light of Lemma \ref{lema.1}
\begin{equation} \label{eqq.0}
|(-\Delta_g)^s \varphi| \leq C \quad \text{ in } B_{1/3}
\end{equation}
for some $C$ depending of $\|\varphi\|_{C^2}$, $g'$ and $s$.  
We define 
$$
L= g^{-1}\left( \pint_{B_1 \setminus B_{1/2}} g(|u|)\,dx \right)
$$
and for any $\sigma>0$  set $w=\sigma L \varphi +u\chi_{B_1\setminus B_{1/2}}$. Then,  by  Lemma \ref{nonlocal.behavior}, $(-\Delta_g)^s w(x) =(a)+(b)$ in $B_{1/3}$, where
\begin{align*}
(a)&=(-\Delta_g)^s (\sigma L\varphi)(x)\\
(b) &=   2 \int_{B_1\setminus B_{1/2}} \left( g\left(\frac{\sigma L \varphi(x)- u(y)}{|x-y|^s}\right) - g\left(\frac{\sigma L \varphi(x)}{|x-y|^s} \right) \right) \frac{dy}{|x-y|^{n+s}}.
\end{align*}

Expression $(a)$ can be bounded by using Lemma \ref{lema.1}, \eqref{delta2} and \eqref{L} as
\begin{align*}
(a)&\leq \frac{n\omega_n}{s} g(2\|\sigma L\varphi\|_\infty) + g'(2\|\nabla \sigma L\varphi \|_\infty) \|D^2 \sigma L\varphi\|_\infty  \frac{n \omega_n}{1-s}\\
&\leq C_1(n,s, \Lambda)(g( \sigma L ) + g'(  \sigma L ) \sigma L  ) \leq C_1 g(\sigma L).
\end{align*}

For expression $(b)$, by applying Lemma \ref{lema.0}, we get
\begin{align*}
(b) &\leq -2^{2-\Lambda} \int_{B_1\setminus B_{1/2}} g\left( \frac{|u(x)|}{|x-y|^s} \right) \frac{dy}{|x-y|^{n+s}}\leq -2^{1-\Lambda} \int_{B_1\setminus B_{1/2}} g\left(  |u(x)|  \right) \,dy\\
&\leq -2^{2-\Lambda} n\omega_n  (1-\tfrac{1}{2^n})g(L)\leq -C_2(n,s,\Lambda) g(L).
\end{align*}

Assume that $\sigma<1$, then from the bounds for $(a)$ and $(b)$ we get
$$
(-\Delta_g)^s w(x) \leq C_1\sigma g( L)    -C_2 g(L).
$$
Then, if we further assume $\sigma < \min\left\{1, \frac{C_2}{2C_1}   \right\}$, we get the upper estimate
\begin{equation} \label{cota}
(-\Delta_g)^s w(x) \leq -\frac{C_2}{2} g(L)   \quad \text{ in } B_{1/3}.
\end{equation}

We distinguish two cases:

$\bullet$ if $L \leq g^{-1}\left(\frac{2K}{C_2}\right)$, then 
$\inf_{B_{1/4}} u \geq 0 \geq \sigma L - g^{-1}\left(\frac{2K}{C_2} \right)$,

$\bullet$ if $L > g^{-1}\left(\frac{2K}{C_2}\right)$, then, from \eqref{cota}, 
$(-\Delta_g)^s w(x) \leq -K\leq (-\Delta_g)^s u$ in $B_{1/3}$,
moreover, by construction $w=\chi_{B_1\setminus B_{1/2}} u \leq u$ in $B^c_{1/3}$. In light of the comparison principle stated in Proposition \ref{compara}, the last two relations imply that
$$
\inf_{B_{1/4}} u \geq \sigma L \geq \sigma L -  g^{-1}\left(\frac{2K}{C_2} \right) 
$$
and the proof concludes.
\end{proof}

Next we extend Theorem \ref{harnack} to supersolutions  nonnegative in a ball.

\begin{lem}  \label{lemman}
There exists $\sigma\in(0,1)$, $C>0$, and for all $\ve>0$   a constant $C_\ve>0$ such that, if $u\in \widetilde W^{s,G}(B_{R/3})$ satisfies
\begin{align*}
\begin{cases}
(-\Delta_g)^s u \geq -K &\quad \text{ in } B_{R/3}\\
u\geq 0 &\quad \text{ in } B_R
\end{cases}
\end{align*}
for some $K\geq 0$, then
$$
\inf_{B_{R/4}} u \geq R^s \sigma g^{-1}\left( \pint_{B_R \setminus B_{R/2}} g(R^{-s}u)\,dx \right) -\tilde C R^s g^{-1}(K) - C_\ve \tailg(u_-;R) - \ve  \sup_{B_R}u  .
$$
\end{lem}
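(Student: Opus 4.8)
The plan is to reduce Lemma~\ref{lemman} to the ``full'' weak Harnack inequality of Theorem~\ref{harnack} by truncating $u$ from below so that the truncation is globally nonnegative, and then carefully estimating the error produced by the truncation. Concretely, I would set $v := u_+ = \max\{u,0\}$, so that $v \geq 0$ in all of $\R^n$ and $v = u$ in $B_R$ (since $u\geq 0$ there). The key point is that $v$ is \emph{almost} a supersolution of the same inequation in $B_{R/3}$: for $x\in B_{R/3}$ and $y$ in the region where $u(y)<0$ we have $u(x)-v(y) = u(x) \leq u(x) - u(y)$, and since $g$ is nondecreasing this means $g$ evaluated at the $v$-difference quotient is smaller than at the $u$-difference quotient. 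Splitting the principal value integral as $\int_{B_R} + \int_{B_R^c}$ and using that $v=u$ on $B_R$, one gets pointwise (in the appropriate weak sense)
\[
(-\Delta_g)^s v(x) \;\geq\; (-\Delta_g)^s u(x) \;-\; 2\int_{B_R^c} \left[ g\!\left(\frac{u(x)-u(y)}{|x-y|^s}\right) - g\!\left(\frac{u(x)+u_-(y)}{|x-y|^s}\right)\right]\frac{dy}{|x-y|^{n+s}} \;\geq\; -K - E(x),
\]
where $E(x)$ is the nonlocal tail-type correction involving $u_-$.

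The next step is to estimate $E(x)$ for $x\in B_{R/3}$. Using the convexity/monotonicity inequalities for $g$ from the Lemma in Section~\ref{sec.prel} (in particular \eqref{minmax1} and \eqref{delta2}) one splits the bracket: the difference of the two $g$-terms is controlled by a term comparable to $g$ of the tail of $u_-$ plus, via Young's inequality in the Orlicz sense, a term that can be made small times $g(\sup_{B_R} u)$ (this is where the free parameter $\ve$ enters). Quantitatively, after passing through $g^{-1}$ one expects a bound of the form
\[
R^s g^{-1}(E(x)) \;\lesssim\; C_\ve \,\tailg(u_-;R) \;+\; \ve \sup_{B_R} u,
\]
using the subadditivity-type estimate $g^{-1}(a+b)\lesssim g^{-1}(a)+g^{-1}(b)$ that follows from the doubling of $g^{-1}$ (equivalently, the lower bound $p^-$ in \eqref{eq.p}). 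Once $E$ is absorbed, $v$ satisfies $(-\Delta_g)^s v \geq -(K + \sup_{B_{R/3}} E)$ in $B_{R/3}$ with $v\geq 0$ everywhere, so Theorem~\ref{harnack} applies to $v$, giving
\[
\inf_{B_{R/4}} v \;\geq\; \sigma R^s g^{-1}\!\left(\pint_{B_R\setminus B_{R/2}} g(R^{-s}|v|)\,dx\right) - R^s g^{-1}\!\big(C_0(K+\|E\|_{L^\infty(B_{R/3})})\big),
\]
and since $v=u$ on $B_R\setminus B_{R/2}$ and on $B_{R/4}$, replacing $v$ by $u$ and splitting the last $g^{-1}$ term using its doubling gives exactly the claimed inequality with $\tilde C$ universal and $C_\ve$ depending on $\ve$, $n$, $s$, $\lam$, $\Lam$.

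I expect the main obstacle to be the bookkeeping in the estimate of the correction term $E(x)$: one must carefully split $g\big(\tfrac{u(x)+u_-(y)}{|x-y|^s}\big)$ versus $g\big(\tfrac{u(x)-u(y)}{|x-y|^s}\big)$, noting $u(x)-u(y) = u(x)+u_-(y)$ precisely on the set $\{u(y)<0\}$ and $u(x)-u(y)\le u(x)$ on $\{u(y)\ge 0\}$, and then control the resulting integral over $B_R^c$ by the tail $\tailg(u_-;R)$ while the $u(x)\le \sup_{B_R}u$ part produces the $\ve \sup_{B_R} u$ term only after invoking \eqref{minmax1} to trade the geometric weight $|x-y|^{-s}$ against a power that makes $\int_{B_R^c}|x-y|^{-n-s-\text{(something)}}\,dy$ finite. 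The interplay of the two scales (the small parameter $\ve$ multiplying $\sup u$, versus the fixed tail constant $C_\ve$) is delicate because $g$ is only bounded between powers, so one cannot use homogeneity; instead the splitting must be done at the level of the integrand using the elementary convexity inequalities already recorded, and the choice of how to split determines how $C_\ve$ blows up as $\ve\to 0$. Everything else --- the comparison principle, the scaling lemma, and Theorem~\ref{harnack} itself --- is quoted directly.
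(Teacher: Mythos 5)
Your proposal matches the paper's proof in all essentials: you truncate to $u_+$ (globally nonnegative), quantify the error $E(x)=\gfls u(x)-\gfls u_+(x)$ via Lemma~\ref{nonlocal.behavior} as a tail-type integral over $\{u<0\}$, split it (the paper uses Lemma~\ref{lema.separa}, which is exactly the ``Young-type in the Orlicz sense'' inequality you anticipate) into a small multiple of $g(\sup_{B_R}u)$ plus a term controlled by $g(\tailg(u_-;R))$, then apply Theorem~\ref{harnack} to $u_+$ and decompose $g^{-1}$ of the enlarged $K$ using Lemma~\ref{delta2.inversa}. One small slip: in your displayed formula for the correction term the second argument should be $u(x)-u_+(y)$ (equivalently $u(x)-u(y)-u_-(y)$) rather than $u(x)+u_-(y)$ --- as written, $u(x)+u_-(y)=u(x)-u(y)$ on the set $\{u<0\}$ so your integrand would vanish precisely where it matters --- but your surrounding text (``$u(x)-v(y)=u(x)\le u(x)-u(y)$ on $\{u<0\}$'') shows you have the right quantity in mind.
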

\begin{proof}
We prove first the result for $R=1$ and then  the result follows for any $R>0$ by scaling from Lemma \ref{lem.scaling}.

We apply Lemma \ref{nonlocal.behavior} to $v=u_-$, $u+v=u_+$ and $\Omega=B_{1/3}$. Then, in the weak sense in $B_{1/3}$ we have that
\begin{align*}
(-\Delta_g)^s u_+(x)&=(-\Delta_g)^s u(x)+ 2\int_{B_{1/3}^c} \left[g\left(\frac{ u(x) -u_+(y) }{|x-y|^s} \right) -  g\left(\frac{ u(x) -u(y) }{|x-y|^s} \right) \right] \frac{dy}{|x-y|^{n+s}} \\
&\geq -K + 
2\int_{u<0} \left[ g\left(\frac{ u(x) }{|x-y|^s} \right) -  g\left(\frac{ u(x) -u(y) }{|x-y|^s} \right) \right]\frac{dy}{|x-y|^{n+s}}\\
&\geq -K +C\int_{u<0} \left[ g\left(\frac{ u(x) }{|y|^s} \right) -  g\left(\frac{ u(x) -u(y) }{|y|^s} \right) \right]\frac{dy}{|y|^{n+s}}
\end{align*}
where in the last inequality we have used that $|x-y|>\tfrac23|y|$.

By using Lemma \ref{lema.separa}, for any $\theta>0$ there exists $C_\theta>0$ such that, since $g$ is odd
$$
g\left(\frac{ u(x) }{|y|^s} \right) -  g\left(\frac{ u(x) -u(y) }{|y|^s} \right) \geq c_\theta g\left(\frac{ u(x) }{|y|^s} \right) + C_\theta g\left(\frac{u(y) }{|y|^s} \right)
$$
with $c_\theta=1-(1-\theta)^\Lambda<1$. Then
\begin{align*}
(-\Delta_g)^s u_+(x)&\geq 
-K - \int_{u<0} g\left(\frac{ u(x) }{|y|^s} \right)\frac{dy}{|y|^{n+s}} -   C_\theta \int_{u<0}  g\left(\frac{u(y) }{|y|^s} \right)\frac{dy}{|y|^{n+s}}\\
&\geq 
-K - \int_{u<0} g\left(\frac{ \sup_{B_1} u }{|y|^s} \right)\frac{dy}{|y|^{n+s}} -     C_\theta g(\tailg(u_-;1))\\
&\geq 
-K -  g\Big( \sup_{B_1} u \Big)  \int_{u<0}  \frac{dy}{|y|^{n+s(\Lambda+1)}} -     C_\theta g(\tailg(u_-;1))\\
&\geq 
-K - C(\Lambda) g\Big( \sup_{B_1} u \Big)   -     C_\theta g(\tailg(u_-;1)):=-\tilde K.
\end{align*}
Now, we can apply Theorem \ref{harnack} to $u_+$, 
\begin{align*}
\inf_{B_{1/4}} u &\geq \sigma g^{-1}\left( \pint_{B_1 \setminus B_{1/2}} g(u)\,dx \right) - g^{-1}\left(C_0 \tilde K \right).
\end{align*}

Observe that from Lemma \ref{delta2.inversa}, for $\theta< \frac{\ve^\lam}{4CC_0}\leq 1$, we get
\begin{align*}
g^{-1}&\left(C_0 \tilde K \right) 
\leq 2^\frac{2}{\lambda}\left[
g^{-1}(C_0 K) + g^{-1}\Big(C_0 C \theta g\Big( \sup_{B_1} u\Big) \Big) + g^{-1}(C_0   C_\theta g(\tailg(u_-;1)) )  \right]\\
&\leq 2^\frac{2}{\lambda}  \Big[ \max\{1, C_0^\frac{1}{\lam} \}
g^{-1}(K)  , + ,  \max\{1, (C_0C \theta)^\frac{1}{\lam} \}  \sup_{B_1} u  \max\{1,(C_0 C_\theta)^\frac{1}{\lam}\} \tailg(u_-;1) \Big] \\
&\leq \tilde C g^{-1}(K) + C_\ve \tailg(u_-;1) +  \ve    \sup_{B_1}u.
\end{align*}
Therefore
$$
\inf_{B_{1/4}} u \geq \sigma g^{-1}\left( \pint_{B_1 \setminus B_{1/2}} g(u)\,dx \right) -\tilde C g^{-1}(K) - C_\ve \tailg(u_-;1) -  \ve\sup_{B_1}u
$$
as required.
\end{proof}

As a consequence of the weak Harnack inequality, we get the following oscillation decay result:

\begin{thm}\label{thm.osc}
There exist $\alpha\in (0,1)$ and a universal constant $C>0$  with the following property: if $R_0\in(0,1)$ and $u\in \W^{s,G}(B_{R_0})\cap L^\infty(B_{R_0})$ satisfies  
\[
|\gfls u|\leq K
\]
weakly in $B_{R_0}$ and that $\tailpp(u;R_0)<\infty$, then for all $r\in(0, R_0)$ it holds that
$$
\osc_{B_r} u \leq   C(R_0^{s-\alpha} g^{-1}(K)+ R_0^{-s}Q(u;R_0))    r^\alpha
$$
where, for some $\beta=\beta(\Lambda,\lambda)>0$
$$
Q(u;R_0)=  (\|u\|_{L^\infty(B_0)} + \tailpp(u;R_0)+ \tailpm(u;R_0))^\beta.
$$
\end{thm}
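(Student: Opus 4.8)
The plan is to derive the oscillation decay from the weak Harnack inequality (Lemma \ref{lemman}) via the classical iteration: show that the oscillation on balls contracts by a fixed factor when the radius is halved, and then upgrade this to the Hölder estimate by a standard dyadic summation. I would work at a fixed scale $R_0$ and track the dependence on $K$ and on the tail/sup quantity $Q(u;R_0)$ carefully, since that is where the exponent $\beta$ enters.

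First I would fix $r\le R_0$ and, for each $j\ge 0$, set $\rho_j=R_0/4^j$ (or a similar dyadic sequence adapted to the ratio $B_{R/4}$ vs.\ $B_R\setminus B_{R/2}$ appearing in Lemma \ref{lemman}). Write $M_j=\sup_{B_{\rho_j}}u$, $m_j=\inf_{B_{\rho_j}}u$ and $\omega_j=M_j-m_j=\osc_{B_{\rho_j}}u$. Apply Lemma \ref{lemman} simultaneously to the two nonnegative functions $v_j=u-m_j\ge 0$ and $w_j=M_j-u\ge 0$ on the ball $B_{\rho_j}$; note both satisfy $|\gfls v_j|\le K$ weakly (the $g$-Laplacian is translation invariant in the target variable and changes sign under $u\mapsto -u$). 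Adding the two resulting inequalities, the left-hand sides $\inf_{B_{\rho_{j}/4}}v_j+\inf_{B_{\rho_j/4}}w_j = \omega_j-\omega_{j+1}$ combine, while on the right-hand side the averaged-$g$ terms for $v_j$ and $w_j$ can be bounded below — using convexity of $g$ and the fact that at least one of $v_j,w_j$ is pointwise $\ge\omega_j/2$ on a set of definite measure in $B_{\rho_j}\setminus B_{\rho_j/2}$ — by a fixed multiple of $g(\rho_j^{-s}\omega_j)$. After applying $g^{-1}$ and absorbing the $\sup$ term via the $\ve$-smallness in Lemma \ref{lemman}, this gives an iteration inequality of the shape
\[
\omega_{j+1}\le (1-\tilde\sigma)\,\omega_j + C\rho_j^s g^{-1}(K) + C_\ve\,\rho_j^{?}\,\tailg\big((u)_\mp;\rho_j\big),
\]
with $\tilde\sigma\in(0,1)$ universal. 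The tail terms must be controlled in terms of $\osc$ on larger balls plus the global quantity $Q$; here the passage between the $\tailg$ appearing in Lemma \ref{lemman} and the $\tailpp,\tailpm$ in the statement — and the comparison $g^{-1}(g(a)+g(b))\lesssim a+b$ via \eqref{delta2}/Lemma \ref{delta2.inversa}, which is exactly where the exponent $\beta(\Lambda,\lambda)$ is produced — is the technical heart.

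Once the iteration inequality is in place, I would choose $\ve$ small, fix $\alpha\in(0,s]$ small enough that $4^{-\alpha}>1-\tilde\sigma/2$ (so $\alpha$ is universal), and run the standard lemma on geometric sequences: if $\omega_{j+1}\le \mu\,\omega_j + A_j$ with $\mu<4^{-\alpha}$ and $A_j\le A\,4^{-\alpha j}$, then $\omega_j\le C(\omega_0+A)4^{-\alpha j}$. Translating $4^{-\alpha j}=(\rho_j/R_0)^\alpha$ and using $\omega_0\le 2\|u\|_{L^\infty(B_{R_0})}\le Q(u;R_0)$ yields $\osc_{B_r}u\le C\big(R_0^{s-\alpha}g^{-1}(K)+R_0^{-s}Q(u;R_0)\big)r^\alpha$ for all dyadic $r$, and for general $r$ by monotonicity of $\osc$ in $r$ at the cost of the fixed dyadic constant. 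The finiteness of $\tailpp(u;R_0)$ (together with boundedness, which controls $\tailpm$) is what guarantees $Q$ is finite and that all tail quantities entering the iteration are summable against the geometric weights.

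The main obstacle I expect is the bookkeeping of the tail terms across scales: Lemma \ref{lemman} is stated with $\tailg(u_-;R)$ at a single scale, but in the iteration one needs $\tailg$ of the truncations $v_j=u-m_j$ and $w_j=M_j-u$ at scale $\rho_j$, and these pick up contributions from the oscillation on \emph{all} larger balls $B_{\rho_i}$, $i<j$. Splitting the tail integral dyadically over the annuli $B_{\rho_i}\setminus B_{\rho_{i+1}}$, bounding each piece by (oscillation on $B_{\rho_i}$) plus the far-away contribution $Q$, and then showing the resulting double sum still closes the recursion with a universal contraction factor — this is the delicate estimate, and it is the step that forces $\alpha$ to be taken small and that generates the exponent $\beta$ through repeated use of the sub-multiplicativity-type bound for $g^{-1}$ under \eqref{delta2}.
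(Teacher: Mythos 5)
Your proposal follows the same overall strategy as the paper: iterate the weak Harnack inequality of Lemma~\ref{lemman} on the functions $u-m_j$ and $M_j-u$ over a geometric sequence of balls $R_j=R_0/4^j$, estimate the tail terms dyadically over annuli plus a far-field contribution giving $Q(u;R_0)$, and close the recursion by choosing $\alpha$ small so that $4^\alpha(1-\sigma/2+\cdots)\le 1-\sigma/4$. You have correctly identified every ingredient, including the role of Lemma~\ref{delta2.inversa} in producing the exponent $\beta$.

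The one place your sketch is not quite self-contained is exactly the point you flag: your proposed closing argument invokes the standard geometric iteration lemma ``if $\omega_{j+1}\le\mu\omega_j+A_j$ with $A_j\le A4^{-\alpha j}$ then $\omega_j\le C(\omega_0+A)4^{-\alpha j}$,'' but the error terms $A_j$ coming from $\tailg\bigl((u-m_j)_-;R_j\bigr)$ are \emph{not} a priori bounded by $A4^{-\alpha j}$ — they depend on $\omega_k$ for all $k<j$ through the values of $(u-m_j)_-$ on the annuli $B_k\setminus B_{k+1}$, so you face a genuine double-sum closure problem. The paper sidesteps this by not working with $\omega_j=\osc_{B_j}u$ directly but instead inductively constructing $m_j\le\inf_{B_j}u$, $M_j\ge\sup_{B_j}u$ with the \emph{exact} identity $M_j-m_j=LR_j^\alpha$ for an $L$ chosen a priori; this turns the tail estimate into a closed-form geometric series $S(\alpha)=\sum_h(4^{h\alpha}-1)^\Lambda4^{-sh}$ whose smallness as $\alpha\to0^+$ is what closes the iteration, and the choice $L=\tfrac{4}{\sigma}C(R_0^{s-\alpha}g^{-1}(K)+R_0^{-s}Q)$ then verifies the base case and propagates. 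If you carry out your version, you would need to either set up the same exact inductive ansatz, or prove a genuinely two-parameter discrete Gronwall lemma; as written, the last step is a gap. Everything else, including the measure-theoretic observation that $v_j+w_j=\omega_j$ forces one of them to be $\ge\omega_j/2$ on half of $B_j\setminus\tfrac12 B_j$ (the paper instead uses Jensen plus $\Delta_2$, but both work), is sound.
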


%

\begin{proof}
Given $R_0\in (0,1)$, for any $j\in\N_0$ set the quantities
$$
R_j=\frac{R_0}{4^j},\quad B_j = B_{R_j}, \quad \frac12 B_j=B_{R_j/2}.
$$
Let us prove that there exists a universal constant $\alpha\in (0,1)$,  $L>0$, a nondecreasing sequence $\{m_j\}$ and  a nonincreasing sequence $\{M_j\}$ such that, for all $j\geq 0$, 
$$
m_j \leq \inf_{B_j} u \leq \sup_{B_j} u \leq M_j, \qquad M_j-m_j=LR^\alpha_j.
$$

We proceed by induction on $j$. 

\medskip

{\bf Step 1.} When $j=0$, set $M_0=\sup_{B_0} u$, $m_0=M_0-LR_0^\alpha$, where $L>0$ satisfies
\begin{equation} \label{paso.0}
L\geq \frac{2\|u\|_{L^\infty(B_0)}}{R_0^\alpha},
\end{equation}
which implies that $m_0 \leq \inf_{B_0}u\leq M_0$.

\medskip

{\bf Step 2.} Inductive step: assume that sequences $\{m_j\}$ and $\{M_j\}$ are constructed up to the index $j$. Then
\begin{align*}
M_j-  m_j &= \pint_{B_j\setminus \frac12 B_j} (M_j-u)\,dx +   \pint_{B_j\setminus \frac12 B_j} (u-m_j)\,dx
\end{align*}
by using the convexity of $g$, 
\begin{align*}
g(R_j^{-s}(M_j-  m_j)) &\leq C\pint_{B_j\setminus \frac12 B_j} g(Rj^{-s}(M_j-u))\,dx +   C\pint_{B_j\setminus \frac12 B_j} g(R_j^{-s}(u-m_j))\,dx 
\end{align*}
from where
\begin{align*}
M_j - m_j \leq & C R_j^s g^{-1}  \left(  \pint_{B_j\setminus \frac12 B_j} g(R_j^{-s}(M_j-u))\,dx \right)+ C R_j^s g^{-1}\left(  \pint_{B_j\setminus \frac12 B_j} g(R_j^{-s}(u-m_j))\,dx \right).
\end{align*}

Let $\sigma\in(0,1)$, $\tilde C>0$ be as in Lemma \ref{lemman}. From the previous inequality and Lemma \ref{lemman} we get
\begin{align*}
C &\sigma(M_j - m_j) 
\leq 
\sigma   R_j^{s} g^{-1} \left(  \pint_{B_j\setminus \frac12 B_j} g(R_j^{-s}(M_j-u))\,dx \right)\\
&\quad + \sigma   R_j^s g^{-1}\left( \frac{1}{R_j^s} \pint_{B_j\setminus \frac12 B_j} g(R_j^{-s}(u-m_j))\,dx \right)\\
&\leq 
\inf_{B_{j+1}}(M_j-u) +  \inf_{B_{j+1}}(u-m_j) +  2\tilde C R_j^s g^{-1}(K)+ C_\ve [ \tailg((M_j-u)_-;R_j)\\
&\quad +\tailg((u-m_j)_-;R_j)]   +   \ve    \left[  \sup_{B_{R_j}}(M_j-u) +  \sup_{B_{R_j}}(u-m_j) \right].
\end{align*}

Setting $\ve=\sigma/4$, $C=\max\{2\tilde C,C_\ve\}$ and rearranging terms we get
\begin{align} \label{cota.osc}
\begin{split}
\osc_{B_{j+1}} u &\leq  \frac{2-\sigma}{2} (M_j-m_j)+ C[ (R_0^s g^{-1}(K) + \tailg((M_j-u)_-;R_j) + \tailg((u-m_j)_-;R_j)).
\end{split}
\end{align}

\medskip

{\bf Step 3.} Let us estimate the tails. Observe that we can write
\begin{align} \label{tail.cota}
\begin{split}
R_j^{-s} g(\tailg((u-m_j)_-;R_j)) &= \sum_{k=0}^{j-1}\int_{B_k\setminus B_{k+1}} g\left( R_j^{s} \frac{(u(y)-m_j)_-}{|y|^s}\right) \frac{dy}{|y|^{n+s}}\\ &\quad + \int_{B_0^c} g\left( R_j^{s} \frac{(u(y)-m_j)_-}{|y|^s}\right) \frac{dy}{|y|^{n+s}}.
\end{split}
\end{align}
Let us deal with the first term. By inductive hypothesis, for all $0\leq k \leq j-1$ we have in $B_k\setminus B_{k+1}$
$$
(u-m_j)_- \leq m_j-m_k\leq (m_j-M_j)+(M_k-m_k)=L (R_k^\alpha - R_j^\alpha),
$$
hence
\begin{align*}
(i):=\sum_{k=0}^{j-1}\int_{B_k\setminus B_{k+1}}& g\left( R_j^{s} \frac{(u(y)-m_j)_-}{|y|^s}\right) \frac{dy}{|y|^{n+s}} 
\leq 
\sum_{k=0}^{j-1}\int_{B_k\setminus B_{k+1}} g\left( L R_j^{\alpha+s}  \frac{(4^{\alpha(j-k)}-1)}{|y|^s}\right) \frac{dy}{|y|^{n+s}}\\
&\leq
\sum_{k=0}^{j-1} g\left( L R_j^{\alpha+s} R_{k}^{-s} (4^{\alpha(j-k)}-1)\right)  \int_{B_k\setminus B_{k+1}} \frac{dy}{|y|^{n+s}}\\
&\leq
C(s,n)  g\left( L R_j^{\alpha+s}  R_j^{-s} \right) R_0^{-s} \sum_{k=0}^{j-1} (4^{\alpha(j-k)}-1)^\Lambda 4^{sk}.
\end{align*}
Then, if we choose $\alpha<\frac{s}{\Lambda}$
\begin{align*}
(i)
&\leq C(s,n)  g\left( L R_j^{\alpha+s} R_0^{-s}  R_k^{-s} \right)  4^{sj} \sum_{k=0}^{j-1} (4^{\alpha(j-k)}-1)^\Lambda 4^{-s(j-k)}\\
&\leq C(s,n)  g\left( L R_j^{\alpha}  \right) R_0^{-s}  R_j^{-s} \sum_{h=1}^\infty (4^{h\alpha}-1)^\Lambda 4^{-sh}\leq C(s,n)  g\left( L R_j^{\alpha}  \right)R_0^{-s}  R_j^{-s} S(\alpha)
\end{align*}
where
$$
S(\alpha) := \sum_{h=1}^\infty (4^{ h\alpha} -1)^\Lambda 4^{-sh} \leq  \sum_{h=1}^\infty 4^{ h[\alpha\Lambda-s]} = \frac{1}{1-4^{\alpha\Lambda-s}}<\infty.
$$
Moreover, observe that $S(\alpha)\to 0$ as $\alpha\to 0^+$. Therefore
\begin{equation} \label{cota.1}
g^{-1}(R_j^{s} \cdot (i)) \leq C R_0^{-\frac{s}{\Lambda}} S(\alpha)^\frac{1}{\Lambda} L R_j^\alpha.
\end{equation}

Let us deal with the second integral. Since by inductive hypothesis
$$
m_j\leq \inf_{B_j}u \leq \sup_{B_j}u \leq \|u\|_{L^\infty(B_0)},
$$
we have 
\begin{align*}
(ii):=\int_{B_0^c} &g\left( R_j^{s} \frac{(u(y)-m_j)_-}{|y|^s}\right) \frac{dy}{|y|^{n+s}} 
\leq 
\int_{B_0^c} g\left( R_j^{s} \frac{|u(y)|+ \|u\|_{L^\infty(B_0)}}{|y|^s}\right) \frac{dy}{|y|^{n+s}} \\
&\leq 
C \int_{B_0^c} g\left( R_j^{ s} \frac{ |u(y)|}{|y|^s}\right) \frac{dy}{|y|^{n+s}} + C \int_{B_0^c} g\left( R_j^{ s} \frac{ \|u\|_{L^\infty(B_0)}}{R_0^s}\right) \frac{dy}{|y|^{n+s}}\leq (a) + (b).
\end{align*} 
Expression $(a)$ can be bounded by using \eqref{delta2} as

\begin{align*}
(a) &\leq 
g( R_j^{ s}R_0^s R_0^{-s} ) \left( \int_{B_0^c\cap\{y\colon u(y)|y|^{-s}\geq 1\}}\frac{  |u(y)|^\Lambda}{|y|^{n+sp^+}} \,dy + \int_{B_0^c\cap\{y\colon u(y)|y|^{-s}<1\}}\frac{  |u(y)|^\lambda}{|y|^{n+sp^-}} \,dy \right)\\
&\leq C 
 g( R_j^{ s} R_0^{-s} ) R_0^{sp^+}   \int_{B_0^c}\frac{ |u(y)|^\Lambda}{|y|^{n+sp^+}} \,dy +C g( R_j^{ s}  R_0^{-s} )R_0^{s(p^+-p^-)} R_0^{sp^-}   \int_{B_0^c}\frac{ |u(y)|^\lambda}{|y|^{n+sp^-}} \,dy \\
&\leq
g( R_j^{ s} R_0^{-s} )R_0^{-s} \left( \tailpp(u;R_0)^{p^+-1} 
+ \tailpm(u;R_0)^{p^--1} \right),
\end{align*}
since $R_0<1$. Moreover, it is easy to see that
\begin{align*}
(b)&\leq 
C g(\|u\|_{L^\infty(B_0)} R_j^{ s} R_0^{-s} ) R_0^{-s}.
\end{align*}
Then, from the last two expression we get
\begin{align*}
(ii)&\leq 
C(n,s) g(R_j^s R_0^{-s}) R_0^{-s} (  \tailpp(u;R_0)^{\Lambda}) +  \tailpm(u;R_0)^{\lambda}) ) \\
&\quad + C(n,s)  g(R_j^s R_0^{-s}) R_0^{-s} \max\{\|u\|_{L^\infty(B_0)}^{\Lambda}, \|u\|_{L^\infty(B_0)}^{\lambda}\} \\
&\leq C(n,s) Q(u;R_0)  g(R_j^s R_0^{-s}) R_0^{-s},
\end{align*}
where, for some $\beta=\beta(\lambda, \Lambda)>0$ we have denoted
$$
Q(u;R_0)=  (\|u\|_{L^\infty(B_0)} + \tailpp(u;R_0) + \tailpm(u;R_0))^\beta.
$$
From the last inequality we get
\begin{equation} \label{cota.2}
g^{-1}(R_j^{s} \cdot (ii)) \leq  C(n,s,\lambda, \Lambda) Q(u;R_0) R_0^{-s}  R_j^s.
\end{equation}

Plugging  \eqref{cota.1} and \eqref{cota.2} in \eqref{tail.cota} gives that
\begin{align*}
\tailg((u-m_j)_-;R_j) &\leq C(g^{-1}(R_j^{s}\cdot (i)) + g^{-1}(R_j^{s} \cdot (ii)))\\
&\leq C(n,s,\lambda, \Lambda)  R_0^{-s} R_j^s [  S(\alpha)^\frac{1}{\Lambda} L    +  Q(u;R_0) ]
\end{align*}
since $\alpha<s<sp^+$. The power $\beta>0$ in the definition of $Q(u;R_0)$ may have changed form line to line but still positive.

A similar estimate holds for $\tailg((M_j-u)_-;R_j)$.

\medskip

{\bf Step 5.} 
By using the previous computations we bound expression \eqref{cota.osc}:
\begin{align*}
\osc_{B_{j+1}} u &\leq \left(1-\frac{\sigma}{2} \right) L R_j^\alpha + C[ (R_0^s g^{-1}(K) + \tailg((M_j-u)_-;R_j) + \tailg((u-m_j)_-;R_j))\\
&\leq
4^\alpha L R_{j+1}^\alpha   \left[  \left(1-\frac{\sigma}{2} \right)  +    C  R_0^{-s} S(\alpha)^\frac{1}{\Lambda}  \right] +  4^\alpha C R_{j+1}^\alpha [ R_0^{s-\alpha} g^{-1}(K)  +R_0^{-s} Q(u,R_0)] .
\end{align*}
We choose $\alpha\in(0,s)$ universally such that
$$
4^\alpha   \left[  \left(1-\frac{\sigma}{2} \right) +    C R_0^{-s} S(\alpha)^\frac{1}{\Lambda}  \right] \leq 1-\frac{\sigma}{4}.
$$
Then
$$
\osc_{B_{j+1}} u \leq \left( \left(1-\frac{\sigma}{4}\right) L + C[R_0^{s-\alpha}g^{-1}(K) + R_0^{-s} Q(u;R_0) ]  \right) R_{j+1}^\alpha.
$$
Now we choose 
$$
L= \frac{4}{\sigma} C (R_0^{s-\alpha}g^{-1}(K) + R_0^{-s} Q(u;R_0) )
$$ 
which implies \eqref{paso.0} as  $4^{\alpha+1}C/\sigma >2$ and gives $\osc_{B_{j+1}} u \leq L R_{j+1}^\alpha$.

\medskip

{\bf Step 6.} We may pick $m_{j+1}$, $M_{j+1}$ such that
$$
m_j\leq m_{j+1}\leq \inf_{B_{j+1}} u \leq \sup_{B_{j+1}} u \leq M_{j+1} \leq M_j, \quad M_{j+1}-m_{j+1} = L R_{j+1}^\alpha, 
$$
which completes the induction and proves the claim.

\medskip

Now fix $r\in (0,R_0)$ and find an integer $j\geq 0$ such that $R_{j+1}\leq r <R_j$, thus $R_j\leq 4r$. Hence, by the claim and the election of $L$ we have that, for some $C$, 
\begin{align*}
\osc_{B_r} u \leq \osc_{B_j} u \leq L R_j^\alpha &\leq C (R_0^{s-\alpha}g^{-1}(K)+R_0^{-s}  Q(u;R_0))   r^\alpha\\
&\leq 
C(R_0^{s-\alpha} g^{-1}(K)+ R_0^{-s}Q(u;R_0))    r^\alpha
\end{align*}
which concludes the argument.
\end{proof}

As a corollary of Theorem \ref{thm.osc} we get the interior H\"older continuity of solutions:
\begin{cor}\label{cor.hold}
Assume that the hypothesis of Theorem \ref{thm.osc} are in force. Then,
\begin{equation}\label{eq.inthold}
[u]_{C^\alpha(B_{R_0/2})}\leq C(n,s,\lambda,\Lambda) (R_0^{s-\alpha}g^{-1}(K)+  R_0^{-s}Q(u;R_0)).
\end{equation}
\end{cor}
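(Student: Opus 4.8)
The plan is to read off the Hölder seminorm bound on $B_{R_0/2}$ from the pointwise oscillation decay of Theorem \ref{thm.osc}. The two ingredients are that $\gfls$ is translation invariant, so Theorem \ref{thm.osc} may be applied with an arbitrary center $x_0\in B_{R_0/2}$ at the uniform scale $R_0/2$ (note $B_{R_0/2}(x_0)\subset B_{R_0}$ whenever $|x_0|<R_0/2$), together with a short chaining argument upgrading the resulting control of $\osc_{B_r(x_0)}u$ to a bound for $|u(x)-u(y)|$ over all pairs $x,y\in B_{R_0/2}$.

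Concretely, I would fix $x_0\in B_{R_0/2}$ and apply Theorem \ref{thm.osc} to the translate $v:=u(\cdot+x_0)$, which satisfies $|\gfls v|\le K$ weakly in $B_{R_0/2}$, lies in $\W^{s,G}\cap L^\infty(B_{R_0/2})$, and has $\tailpp(v;R_0/2)=\tailpp(u;x_0,R_0/2)<\infty$ (this will follow from Step 2 below and the standing hypothesis $\tailpp(u;R_0)<\infty$). This gives, for every $r\in(0,R_0/2)$,
$$
\osc_{B_r(x_0)}u\le C\big((R_0/2)^{s-\alpha}g^{-1}(K)+(R_0/2)^{-s}Q(v;R_0/2)\big)r^\alpha,
$$
with $\alpha,C$ the universal constants of Theorem \ref{thm.osc} and $Q(v;R_0/2)=(\|u\|_{L^\infty(B_{R_0/2}(x_0))}+\tailpp(u;x_0,R_0/2)+\tailpm(u;x_0,R_0/2))^\beta$.

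The only real computation is to bound $Q(v;R_0/2)$ by a universal multiple of $Q(u;R_0)$, uniformly in $x_0$. Since $B_{R_0/2}(x_0)\subset B_{R_0}$ one has $\|u\|_{L^\infty(B_{R_0/2}(x_0))}\le\|u\|_{L^\infty(B_{R_0})}$. For the tails, split $B_{R_0/2}^c(x_0)=\big(B_{R_0/2}^c(x_0)\cap B_{R_0}\big)\cup B_{R_0}^c$: on the first set $|u|\le\|u\|_{L^\infty(B_{R_0})}$ and $|x_0-y|\ge R_0/2$, which contributes at most $C\|u\|_{L^\infty(B_{R_0})}^{p^+-1}$ to $\tailpp(u;x_0,R_0/2)^{p^+-1}$; on $B_{R_0}^c$ the elementary inequality $|x_0-y|\ge\tfrac12|y|$ (valid since $|x_0|<R_0/2\le|y|/2$) shows the contribution is at most $C\,\tailpp(u;R_0)^{p^+-1}$. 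Hence $\tailpp(u;x_0,R_0/2)\le C(\|u\|_{L^\infty(B_{R_0})}+\tailpp(u;R_0))$, and likewise with $p^-$ in place of $p^+$ for $\tailpm$. Therefore $Q(v;R_0/2)\le C\,Q(u;R_0)$, with $\beta$ possibly enlarged but still positive and depending only on $\lambda,\Lambda$; using $R_0<1$ (so $(R_0/2)^{s-\alpha}\le R_0^{s-\alpha}$ and $(R_0/2)^{-s}\le 2R_0^{-s}$) we conclude that, for every $x_0\in B_{R_0/2}$ and $r\in(0,R_0/2)$,
$$
\osc_{B_r(x_0)}u\le C\big(R_0^{s-\alpha}g^{-1}(K)+R_0^{-s}Q(u;R_0)\big)r^\alpha .
$$

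Finally, given $x,y\in B_{R_0/2}$: if $|x-y|<R_0/2$ then $|u(x)-u(y)|\le\osc_{B_{|x-y|}(x)}u$ and the last display (with center $x$, $r=|x-y|$) closes the estimate; if $R_0/2\le|x-y|<R_0$, I would split the segment $[x,y]\subset B_{R_0/2}$ into three equal subintervals with endpoints $x=z_0,z_1,z_2,z_3=y$, apply the previous case to each consecutive pair (each $z_k\in B_{R_0/2}$ and $|z_k-z_{k+1}|=|x-y|/3<R_0/2$), and add up, losing only a factor $3^{1-\alpha}$. Taking the supremum over $x\neq y$ in $B_{R_0/2}$ yields \eqref{eq.inthold}. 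I expect the change-of-center/scale estimate for the tail terms in Step 2 to be the only step requiring any care; everything else follows directly from Theorem \ref{thm.osc}, which is exactly why the statement is recorded as a corollary.
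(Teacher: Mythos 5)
Your proof is correct and fills in exactly the deduction the paper leaves implicit with the phrase ``an immediate corollary'': translation invariance of $\gfls$ lets one recenter Theorem~\ref{thm.osc} at any $x_0\in B_{R_0/2}$ at scale $R_0/2$, the tail quantities recenter with only a universal multiplicative loss (your split of $B_{R_0/2}^c(x_0)$ into the annular piece inside $B_{R_0}$, where $|u|\le\|u\|_{L^\infty(B_{R_0})}$ and $|x_0-y|\ge R_0/2$, and the exterior piece $B_{R_0}^c$, where $|x_0-y|\ge|y|/2$, is the right computation), and the chaining step upgrades oscillation decay to a seminorm bound over all of $B_{R_0/2}$. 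This is precisely how the authors go on to use the result in the proof of Theorem~\ref{thm.main}, where they invoke the corollary centered at arbitrary points with $\tailpp(u;x,r)$ and $\tailpm(u;x,r)$; your argument is the one they had in mind.
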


\section{Boundary behavior}\label{sec.bdry}

In this section we discuss the boundary behavior of solutions. Throughout this section, $u_0:\R\longrightarrow\R$ will be defined by
\begin{equation}\label{eq.1dprof}
u_0(x):=x_+^s.
\end{equation}
We start by showing that $u_0$ is $(s,g)-$harmonic in the positive half line:
\begin{lem}\label{1dsol}
$u_0\in\W^{s,G}_{\text{loc}}(\R)$ and $\gfls u_0  = 0$ weakly and strongly in $\R_+$.
\end{lem}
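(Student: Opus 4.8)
The plan is to deduce the identity $\gfls u_0=0$ from the single‑point statement $\gfls u_0(1)=0$ by a scaling argument, and to prove the latter by evaluating the resulting one–dimensional principal value integral explicitly.

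First, the membership $u_0\in\W^{s,G}_{\text{loc}}(\R)$ is routine. On any bounded set $u_0$ is bounded, hence in $L^G$; the $s$–H\"older quotient $D_su_0$ is bounded (since $t\mapsto t_+^s$ is globally $s$–H\"older with constant $\le1$) and, away from the origin, behaves like $|x-y|^{1-s}$ near the diagonal, so the substitution $y=xt$ shows $[u_0]_{s,G,U}<\infty$ for every bounded $U$; and since $0\le u_0(x)\le(1+|x|)^s$ we have $g\big(|u_0(x)|/(1+|x|)^s\big)\le g(1)$, so the tail integral in the definition of $\W^{s,G}$ is finite. Consequently the weak and strong formulations make sense in $\R_+$. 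Next, the change of variables $y=xz$ together with the $s$–homogeneity $u_0(xz)=x^s u_0(z)$ (valid for $x>0$) gives $\gfls u_0(x)=x^{-s}\,\gfls u_0(1)$, so it suffices to show $\gfls u_0(1)=0$.

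For the point computation, set $\Phi(y):=D_su_0(1,y)=\dfrac{1-y_+^s}{|1-y|^s}$, so that $\gfls u_0(1)=\mathrm{p.v.}\int_\R g(\Phi(y))\,|1-y|^{-1-s}\,dy$, and split this into the (absolutely convergent) integral over $(-\infty,0)$ plus the principal value over $(0,\infty)$. On $(-\infty,0)$ one has $\Phi(y)=(1-y)^{-s}$, and the substitutions $u=1-y$, $v=1/u$, $w=v^s$ turn the corresponding integral into $\frac1s\int_0^1 g(w)\,dw=\frac{G(1)}{s}$. For the part over $(0,\infty)$ the key observation is the \emph{inversion symmetry} $\Phi(1/y)=-\Phi(y)$ for $y>0$, $y\ne1$, a one–dimensional analogue of the Kelvin transform. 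Applying $y\mapsto1/y$ to the tail $(1+\ve,\infty)$ and using that $g$ is odd folds it onto a piece over $(0,\tfrac1{1+\ve})$; combining the two integrands over $(0,1-\ve)$ produces the shared integrand $g(\Phi(y))\frac{1-y^{s-1}}{(1-y)^{1+s}}$, which is absolutely integrable since $(1-s)(\Lambda+1)>0$, while the remaining overlap $(1-\ve,\tfrac1{1+\ve})$, of length $O(\ve^2)$, contributes $O(\ve^{(1-s)(\Lambda+1)})\to0$, so that
\[
\mathrm{p.v.}\int_0^\infty g(\Phi(y))\,\frac{dy}{|1-y|^{1+s}}=\int_0^1 g(\Phi(y))\,\frac{1-y^{s-1}}{(1-y)^{1+s}}\,dy.
\]
Now a direct differentiation gives $\Phi'(y)=s(1-y)^{-1-s}\big(1-y^{s-1}\big)$ on $(0,1)$, so the last integrand is exactly $\frac1s\frac{d}{dy}G(\Phi(y))$; since $\Phi$ extends continuously to $[0,1]$ with $\Phi(0)=1$ and $\Phi(1)=0$, the fundamental theorem of calculus evaluates this integral to $\frac1s\big(G(0)-G(1)\big)=-\frac{G(1)}{s}$. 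Adding the two contributions, $\gfls u_0(1)=0$.

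It remains to upgrade this to the strong and weak senses. For ``strongly'' one shows that the truncations $2\int_{|x-y|>\ve}g(D_su_0)\,|x-y|^{-1-s}\,dy$ converge to $0$ locally uniformly on $\R_+$: away from the diagonal this is the tail bound already used, and near the diagonal --- where $u_0$ is smooth, being bounded away from $0$ --- one symmetrizes in $z=y-x$ and Taylor–expands exactly as in the proof of Lemma \ref{lema.1}, the dominating function being integrable because $\Lambda>-1$; locally uniform convergence is in particular $L^1_{\text{loc}}$ convergence. Finally a strong (in fact pointwise) solution is a weak one by the usual symmetrization–and–Fubini manipulation, legitimate here because $u_0\in\W^{s,G}_{\text{loc}}(\R)$ and by Lemma \ref{lem.buenadefi}. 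The genuinely delicate step is the evaluation of $\gfls u_0(1)$: when $g=\mathrm{id}$ this is the classical $\fls x_+^s=0$, usually obtained from Beta–function identities, but for a general convex odd $g$ those are unavailable, so one must combine the inversion symmetry $\Phi(1/y)=-\Phi(y)$ with the oddness of $g$ to fold the integral --- and it is then a fortunate accident that the folded integrand is an exact derivative of $G\circ\Phi$. Tracking the principal value through the inversion, since the symmetric cutoffs are not exactly preserved, is the only technical wrinkle, and it is harmless.
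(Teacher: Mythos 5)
Your proposal is correct and follows essentially the same route as the paper: the paper's substitution $t=y/x$ plays the role of your reduction to $x=1$, and the central mechanism --- fold the integral over $(0,\infty)$ via the inversion $y\mapsto 1/y$ using oddness of $g$, and recognize the resulting integrand as $\tfrac1s\frac{d}{dy}G(\Phi(y))$ so it integrates exactly against the $\tfrac{G(1)}{s}$ coming from $(-\infty,0)$ --- is the same as the paper's identities (3.4)--(3.5). (One harmless slip: the integrability exponent near $y=1$ should involve $(1-s)(\lambda+1)$, since the relevant small-argument bound is $g(\tau)\lesssim\tau^\lambda$, not $(1-s)(\Lambda+1)$; both are positive so the conclusion is unaffected.)
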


\begin{proof}

First we show that $u_0\in\W^{s,G}_{\text{loc}}(\R)$. This follows rather straightforwardly since 
\[
G\left(\frac{|u_0(x)-u_0(y)|}{|x-y|^s}\right)\frac{1}{|x-y|} \leq C\frac{|u_0(x)-u_0(y)|^{p^+}}{|x-y|^{1+sp^+}} 
\]
and we can apply the homogeneous result \cite[Lemma 3.1]{IMS}.   

To see that $u_0$ is a solution, let $\rho\in(0,1)$ take any compact set $K\subset(\rho,\rho^{-1})$ and let $x\in K$. For any $\varepsilon>0$ consider the integral 
\[
I_\varepsilon:=\int_{|x-y|>\varepsilon} g\left(\frac{|u_0(x)-u_0(y)|}{|x-y|^s}\right)\frac{u_0(x)-u_0(y)}{|u_0(x)-u_0(y)|}\frac{dy}{|x-y|^{1+s}}.
\]
We will show that 
\begin{equation}\label{eq.unif}
I_\varepsilon\longrightarrow0\text{ uniformly as }\varepsilon\rightarrow0^+.
\end{equation}
For this, notice that for $\varepsilon<x$  it holds that $0<x-\varepsilon<x+\varepsilon<\frac{x^2}{x-\varepsilon}$ 
and split 
\[
I_\varepsilon= \int_{-\infty}^0 + \int_{x+\varepsilon}^{\frac{x^2}{x-\varepsilon}}+\int_0^{x-\varepsilon} +\int_{\frac{x^2}{x-\varepsilon}}^\infty=I_1+I_2+I_3+I_4.
\]

We will estimate each term. For this, we will use the following identities:
\begin{equation}\label{eq.g1}
\left(G\left(\frac{1}{|1-t|^s}\right)\right)'=g\left(\frac{1}{|1-t|^s}\right)\frac{s}{|1-t|^{1+s}} \quad t\neq1
\end{equation}
\begin{equation}\label{eq.g2}
\left(G\left(\frac{1-t^s}{|1-t|^s}\right)\right)'=sg\left(\frac{1-t^s}{|1-t|^s}\right)\left(\frac{1-t^{s-1}}{(1-t)^{1+s}}\right)\quad t<1.
\end{equation}

Notice that $u_0\equiv 0$ in $(-\infty,0)$ so, the change of variables $t=\frac{y}{x}$ and \eqref{eq.g1} give
\begin{align*}
I_1 &=\int_{-\infty}^0 g\left(\frac{x^s}{|x-y|^s}\right)\frac{dy}{|x-y|^{1+s}} 
    =\int_{-\infty}^0 g\left(\frac{1}{|1-\frac{y}{x}|^s}\right)\frac{x^{-1-s}dy}{|1-\frac{y}{x}|^{1+s}}\\
    &=x^{-s}\int_{-\infty}^0 g\left(\frac{1}{|1-t|^s}\right)\frac{dt}{|1-t|^{1+s}} 
    =\frac{x^{-s}}{s}\int_{-\infty}^0 \left(G\left(\frac{1}{|1-t|^s}\right)\right)'\:dt = \frac{x^{-s}}{s}G(1).
\end{align*}

To bound $I_2$ notice that using \eqref{minmax2}  we have 
\begin{align*}
g\left(\frac{x^s-y_+^s}{|x-y|^s}\right)\frac{1}{|x-y|^{1+s}}  \leq   \frac{(x^s-y_+^s)^{p^--1}}{|x-y|^{1+sp^-}}	 
\end{align*}
so, using the H\"older continuity of the power $s$, 
\begin{align*}
|I_2|  \leq C\int_{x+\varepsilon}^{\frac{x^2}{x-\varepsilon}} \frac{|x-y|^{s(p^--1)}}{|x-y|^{1+sp^-}}	\:dy 		= C\frac{x^{-s}}{s} \frac{(x^s-(x-\varepsilon)^s)}{\varepsilon^s}.
\end{align*}

Next, $I_3$ is estimated using the same change of variables as that for $I_1$:
\begin{align*}
I_3 & = \int_0^{x-\varepsilon} g\left(\frac{x^s-y^s}{|x-y|^s}\right)\frac{dy}{|x-y|^{1+s}} 
	 = x^{-(1+s)}\int_0^{x-\varepsilon} g\left(\frac{1-\left(\frac{y}{x}\right)^s}{|1-\left(\frac{y}{x}\right)|^s}\right)\frac{dy}{|1-\left(\frac{y}{x}\right)|^{1+s}} \\ 
	& = x^{-s}\int_0^{1-\frac{\varepsilon}{x}} g\left(\frac{1-t^s}{|1-t|^s}\right)\frac{dt}{|1-t|^{1+s}}.
\end{align*}
Similarly, recalling that $g$ is odd and making one further change of variables (for simplicity of notation we do not change the name of the variable),
\begin{align*}
I_4 & = \int_{\frac{x^2}{x-\varepsilon}}^\infty g\left(\frac{x^s-y^s}{|x-y|^s}\right)\frac{dy}{|x-y|^{1+s}}  = -x^{-(1+s)}\int_{\frac{x^2}{x-\varepsilon}}^\infty g\left(\frac{\left(\frac{y}{x}\right)^s-1}{|\frac{y}{x}-1|^s}\right)\frac{dy}{|\frac{y}{x}-1|^{1+s}} \\ 
	& = -x^{-s}\int_{(1-\frac{\varepsilon}{x})^{-1}}^\infty g\left(\frac{t^s-1}{|t-1|^s}\right)\frac{dt}{|t-1|^{1+s}} 
 = -x^{-s}\int_0^{1-\frac{\varepsilon}{x}} g\left(\frac{t^{-s}-1}{|t^{-1}-1|^s}\right)\frac{dt}{t^2|t^{-1}-1|^{1+s}} \\ 
	& = -x^{-s}\int_0^{1-\frac{\varepsilon}{x}} g\left(\frac{t^{-s}-1}{|t^{-1}-1|^s}\right)\frac{t^{s-1}dt}{|1-t|^{1+s}} 
\end{align*}
therefore, using \eqref{eq.g2}
\begin{align*}
I_3 + I_4 & = x^{-s}\int_0^{1-\frac{\varepsilon}{x}} \left[g\left(\frac{1-t^s}{|1-t|^s}\right) - t^{s-1}g\left(\frac{t^{-s}-1}{|t^{-1}-1|^s}\right)\right]\frac{dt}{|1-t|^{1+s}}\\  
         & = x^{-s}\int_0^{1-\frac{\varepsilon}{x}} g\left(\frac{1-t^s}{|1-t|^s}\right)\frac{(1-t^{s-1})}{|1-t|^{1+s}}\:dt
 = \frac{x^{-s}}{s}\int_0^{1-\frac{\varepsilon}{x}} \left(G\left(\frac{1-t^s}{|1-t|^s}\right)\right)'\:dt\\ 
         & = \frac{x^{-s}}{s}\left(G\left(\frac{x^s-(x-\varepsilon)^s}{\varepsilon^s}\right)-G(1)\right). 
\end{align*}

Putting all the estimates together we get
\begin{equation}\label{eq.iep}
|I_\varepsilon|\leq \frac{x^{-s}}{s}\left(C\frac{(x^s-(x-\varepsilon)^s)}{\varepsilon^s}+G\left(\frac{x^s-(x-\varepsilon)^s}{\varepsilon^s}\right)\right)
\end{equation}
and hence the desired convergence. In particular, $u_0$ is a strong (and thanks to Corollary \ref{cor.strongweak} a weak) solution.  
\end{proof}

Next, we want to study the one-dimensional profile $u(x)=u_0(x_n)$ in the half space. Here $GL(n)$ stands for the general linear group  of all invertible $n\times n$ matrices.  

\begin{lem}\label{1dprof}
Let $A\in GL(n)$ and define, for $\varepsilon>0$ and $x\in \R^n_+:=\{x_n>0\}$
\[
h_\varepsilon(x,A):=\int_{B_\varepsilon^c}g\left(\frac{u_0(x_n)-u_0(z+x_n)}{|Az|^s}\right)\:\frac{dz}{|Az|^{n+s}}
\] 
and $u(x)=u_0(x_n)$. 

Then $h_\varepsilon\rightarrow0^+$ uniformly in any compact $K\subset\R^n_+\times GL(n)$. As a consequence, $u\in \widetilde{W}^{s,G}(\R^n)$ and $\gfls u=0$ weakly and strongly in $\R^n_+$.
\end{lem}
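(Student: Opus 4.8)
The plan is to first dispose of the membership $u\in\widetilde W^{s,G}(\R^n)$ and reduce everything else to the uniform convergence of $h_\varepsilon$. The membership is inherited from the one--dimensional case: on any bounded set, $G(t)\le c_2t^{p^+}/p^+$ and integration in the $n-1$ transversal variables bound the Gagliardo--Orlicz energy of $u$ by a constant multiple of that of $u_0$ on an interval, which is finite exactly as in Lemma \ref{1dsol}, while the tail integral is finite since $|u(x)|/(1+|x|)^s\le1$ gives $g(|u(x)|/(1+|x|)^s)\le g(1)$ and $(1+|x|)^{-n-s}\in L^1(\R^n)$. Once $h_\varepsilon\to0$ is known uniformly on compacts, the choice $A=I$ yields $\gfls u(x)=\lim_{\varepsilon\to0^+}h_\varepsilon(x,I)=0$ for every $x\in\R^n_+$, and uniformity on compacts upgrades this to convergence in $L^1_{\mathrm{loc}}(\R^n_+)$, so $u$ is a strong solution; the weak statement then follows from Corollary \ref{cor.strongweak}.

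To prove the uniform convergence, fix a compact $K\subset\R^n_+\times GL(n)$ and normalize the kernel by two linear changes of variables. The substitution $w=Az$ turns $h_\varepsilon(x,A)$ into $|\det A|^{-1}\int_{\{|A^{-1}w|>\varepsilon\}}g\!\big((u_0(x_n)-u_0(x_n+b\cdot w))/|w|^s\big)|w|^{-n-s}\,dw$ with $b=A^{-T}e_n$, and a rotation carrying $b$ to $|b|e_1$ reduces it to
\[
h_\varepsilon(x,A)=\frac{1}{|\det A|}\int_{E_\varepsilon^c}g\!\left(\frac{u_0(x_n)-u_0(x_n+\beta v_1)}{|v|^s}\right)\frac{dv}{|v|^{n+s}},
\]
where $\beta=|b|$ and $E_\varepsilon$ is an origin--centred ellipsoid whose semi--axes ($\varepsilon$ times the singular values of $A$) are comparable to $\varepsilon$ uniformly on $K$. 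Passing to polar coordinates $v=r\omega$, so that $E_\varepsilon^c=\{r>\varepsilon q(\omega)\}$ with $q(\omega)=|(OA)^{-1}\omega|^{-1}$ bounded above and below on $K$ and $q(-\omega)=q(\omega)$, I would then substitute $\rho=\beta r|\omega_1|$ in the radial integral and pair antipodal directions: writing $V(\sigma)=u_0(x_n)-u_0(x_n+\sigma)$ and $\mu_\omega=(\beta|\omega_1|)^s$, the oddness of $g$ collapses the paired radial integrand to $\mu_\omega\big[g(\mu_\omega V(\rho)/\rho^s)+g(\mu_\omega V(-\rho)/\rho^s)\big]\rho^{-1-s}$, which reduces everything to a one--parameter family of symmetric principal--value one--dimensional integrals, integrated over $\omega\in S^{n-1}$.

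Two facts then close the argument. First, for every $\mu>0$ the integral $\mathrm{p.v.}\int_\R g(\mu V(\sigma)/|\sigma|^s)\,|\sigma|^{-1-s}d\sigma$ vanishes: it is the one--dimensional $g_\mu$--Laplacian of $u_0$ at $x_n$, where $g_\mu(t):=g(\mu t)$ is the derivative of the Young function $\mu^{-1}G(\mu t)$ and still satisfies \eqref{L} and \eqref{g4}, so it equals $0$ by Lemma \ref{1dsol}; here the $s$--homogeneity of $u_0$ is exactly what makes the polar slices rescaled copies of the same profile. Second, the paired radial integrand is dominated, uniformly in $\omega$, $\varepsilon$ and $(x,A)\in K$, by a function integrable on $(0,\infty)\times S^{n-1}$: for $\rho$ bounded away from $0$ the $s$--homogeneous growth of $u_0$ gives an $O(\rho^{-1-s})$ bound, while near $\rho=0$, since $x_n$ stays in a compact subset of $(0,\infty)$, the profile $V$ is $C^2$ at the origin with $V(0)=0$, hence $V(\rho)+V(-\rho)=O(\rho^2)$, and the oddness of $g$ together with the monotonicity of $g'$ and \eqref{L} (used as $g'(t)\le\Lambda g(t)/t$ and in the form \eqref{minmax1}) yield $|g(\mu_\omega V(\rho)/\rho^s)+g(\mu_\omega V(-\rho)/\rho^s)|\lesssim g'(C\rho^{1-s})\rho^{2-s}\lesssim\rho^{(1-s)(\lambda-1)+2-s}$, so that the radial integrand is $\lesssim\rho^{(1-s)\lambda-s}$, integrable near $\rho=0$. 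Dominated convergence, first in $\rho$ (the truncation radius $\varepsilon q(\omega)\beta|\omega_1|\to0$) and then in $\omega$, now gives $h_\varepsilon(x,A)\to|\det A|^{-1}\int_{S^{n-1}}\mu_\omega\cdot0\,d\omega=0$, uniformly on $K$.

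I expect the main obstacle to be the second fact, namely the uniform integrable domination near the radial singularity. A crude absolute bound on the paired radial integrand only gives $\rho^{(1-s)\lambda-1-s}$, which fails to be integrable at $0$ when $\lambda\le s/(1-s)$ (for instance when $\lambda$ is close to $1$ and $s$ close to $1$), so one is forced to extract the first--order cancellation between the two antipodal slices, using both the oddness of $g$ and the interior $C^2$--regularity of $u_0$ — which is precisely where the compactness of $K$ inside $\{x_n>0\}$ enters — and then to control $g'$ on the small scale $\rho^{1-s}$ through the growth condition \eqref{L}. This cancellation also substitutes, in the nonlinear setting, for the Fubini reduction to a one--dimensional fractional operator available only when $g$ is linear, and is the reason the slicing must be performed radially with antipodal pairing rather than by integrating out the transversal variables.
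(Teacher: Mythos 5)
Your proposal is correct and reaches the same conclusion as the paper's proof, but the technical execution differs in an interesting way. Both arguments reduce the $n$-dimensional integral to one-dimensional radial slices with antipodal pairing and then invoke the one-dimensional Lemma~\ref{1dsol}. The paper works in ``elliptical polar coordinates'' adapted to $AS^{n-1}$, factors the inner radial integral so that it is literally the truncated 1D integral $I_\varepsilon$ of Lemma~\ref{1dsol}'s proof (with a $\sigma(\omega)$ inside $g$ compensated via \eqref{minmax1}), and then uses the explicit $G$-telescoping bound \eqref{eq.iep} together with the monotonicity of $\psi(x_n,\cdot)$ to push the estimate through the angular integral. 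You instead normalize by $w=Az$ followed by a rotation sending $A^{-T}e_n$ to a coordinate axis, so the angular domain becomes the round sphere; you then make the clean and useful observation that $g_\mu(t):=g(\mu t)$ still satisfies \eqref{L} and \eqref{g4}, so the inner p.v.\ integral is exactly $(-\Delta_{g_\mu})^s u_0(x_n)=0$ by Lemma~\ref{1dsol}; and you obtain the dominating function from a second-order Taylor expansion of $V$ near $\rho=0$ together with the mean value theorem and the small-$t$ growth $g'(t)\lesssim t^{\lambda-1}$, rather than from the explicit $G$-telescoping of the paper. This buys you an argument that does not need to re-derive an explicit truncation bound, at the cost of a slightly delicate domination estimate. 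Two small points: first, be careful that the exponent $\rho^{(1-s)\lambda-s}$ for the near-origin domination is checked to be $>-1$ (it is, since $\lambda>1>-1$), which is precisely what you flagged; second, the phrase ``Dominated convergence \dots gives \dots uniformly on $K$'' should be spelled out a little more, since DCT by itself gives only pointwise convergence in $(x,A)$ — what actually gives uniformity is that the inner integral over $(0,\delta)$ with $\delta=\varepsilon q(\omega)\beta|\omega_1|$ is bounded by $C\delta^{(1-s)\lambda-s+1}$ with $C$ uniform over $K$ and $\omega$, and $\delta\le C_K\varepsilon$. With that clarification, the argument is complete and correct.
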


\begin{proof}

Let $A\in GL(n)$ and $K=H\times H'$ a compact subset of $\R^n_+\times GL(n)$. By the singular value decomposition we have that $AS^{n-1}$ is an ellipse with diameter bounded by the spectral norm of $A$ (here $S^{n-1}=\partial B_1$). The elliptical coordinates are given for any $y\in\R^n\setminus\{0\}$ by
\[
y=\rho\omega,\quad \rho>0, \quad \omega\in AS^{n-1}.
\]  
Then $dy=\rho^{n-1}d\rho d\omega$ with $d\omega$ the surface element of $AS^{n-1}$. Let us further call
\[
e_A:=A^{-1}e_n,\quad E_A:=\{x\in\R^n:x\cdot e_A>0\}.
\]

Now, with the change of variables $Az=y$ we compute
\begin{align*}
h_\varepsilon(x,A) & =\int_{B_\varepsilon^c}g\left(\frac{u(x)-u(x+z)}{|Az|^s}\right)\:\frac{dz}{|Az|^{n+s}} \\
				   & = \int_{(AB_\varepsilon)^c}g\left(\frac{u(x)-u(x+A^{-1}y)}{|y|^s}\right)\:\frac{dy}{|\text{det}A||y|^{n+s}} \\
				   & = \int_{AS^{n-1}}\frac{1}{|\text{det}A||\omega|^{n+s}}\int_\varepsilon^\infty g\left(\frac{u_0(x_n)-u_0(x_n+\omega\cdot e_A\rho)}{| \omega\rho|^s}\right)\:\frac{d\rho}{\rho^{1+s}}d\omega \\
				   & = \int_{AS^{n-1}\cap E_A}\frac{1}{|\text{det}A||\omega|^{n+s}}\int_{(-\varepsilon,\varepsilon)^c} g\left(\frac{u_0(x_n)-u_0(x_n+\omega\cdot e_A\rho)}{| \omega\rho|^s}\right)\:\frac{d\rho}{|\rho|^{1+s}}d\omega \\
				   & = \int_{AS^{n-1}\cap E_A}\frac{|\omega\cdot e_A|^{1+s}}{|\text{det}A||\omega|^{n+s}}\int_{(-\varepsilon,\varepsilon)^c} g\left(\sigma(\omega)\frac{u_0(x_n)-u_0(x_n+\omega\cdot e_A\rho)}{|\omega\cdot e_A\rho|^s}\right)\:\frac{d\rho}{|\omega\cdot e_A\rho|^{1+s}}d\omega
\end{align*}
where 
\[
\sigma(\omega):=\left(\frac{\omega}{|\omega|}\cdot e_A\right)^s.
\]
 
Notice that thanks to the bound \eqref{eq.iep} we have, for $\varepsilon$ small enough depending on the norm of $A$,
\begin{align*}
\int_{(-\varepsilon,\varepsilon)^c} &g\left(\frac{u_0(x_n)-u_0(x_n+\omega\cdot e_A\rho)}{|\omega\cdot e_A\rho|^s}\right)\:\frac{d(\omega\cdot e_A\rho)}{|\omega\cdot e_A\rho|^{1+s}}\\  & \leq \frac{x_n^{-s}}{s}\left(C \left(\frac{x_n^s-(x_n-\varepsilon)^s}{\varepsilon^s}\right)  
							  + G\left(\frac{x_n^s-(x_n-\varepsilon)^s}{\varepsilon^s}\right)\right)  =: Cx_n^{-s}\psi(x_n,\varepsilon).
\end{align*}

Therefore, using \eqref{minmax1} we get
\[
|h_\varepsilon(x,A)| \leq Cx_n^{-s}\int_{AS^{n-1}\cap E_A}\frac{|\omega\cdot e_A|^{1+s}\max\{\sigma(\omega)^{sp^-},\sigma(\omega)^{sp^+}\}}{|\text{det}A||\omega|^{n+s}}\psi(x_n,\omega\cdot e_A\varepsilon)d\omega.
\]

Now for $\varepsilon>0$
\[
\frac{\partial}{\partial\varepsilon}\psi(x_n,\varepsilon)=\frac{s}{\varepsilon^{s+1}}\left(\varepsilon(x_n-\varepsilon)^{s-1}-x_n^s+(x_n-\varepsilon)^s\right)\geq0. 
\]
Also, changing back the variables we have
\begin{align*}
\int_{AS^{n-1}}&\frac{|\omega\cdot e_A|^{1+s}\max\{\sigma(\omega)^{sp^-},\sigma(\omega)^{sp^+}\}}{|\text{det}A||\omega|^{n+s}}d\omega\\
 & = \int_{S^{n-1}}\frac{|\omega\cdot e_n|^{1+s}\max\left\{\left(\frac{\omega\cdot e_n}{|A\omega|}\right)^{sp^-},\left(\frac{\omega\cdot e_n}{|A\omega|}\right)^{sp^+}\right\}}{|A\omega|^{n+s}}d\omega \\
 & \leq \mathcal{H}^{n-1}(S^{n-1})\|A^{-1}\|^{n+s}\max\left\{\|A^{-1}\|^{sp^-},\|A^{-1}\|^{sp^+}\right\}
\end{align*}
where $\mathcal{H}^{n-1}$ is the $(n-1)-$dimensional Hausdorff measure and we used that 
\[
|\omega\cdot e_A|\leq |\omega||A^{-1}e_n|\leq \|A\|\|A^{-1}\|.
\]

Therefore, we get 
\[
|h_\varepsilon(x,A)| \leq Cx_n^{-s}\psi(x_n,\|A\|\|A^{-1}\|\varepsilon)
\]
and the result follows by taking $\varepsilon\rightarrow0$. From this result, the conclusions on $u(x)=u_0(x_n)$ follow straightforwardly.
\end{proof}

Once the $1-$d profile is known to be a solution, we wish to straighten the boundary of $\Omega$. The following lemma asserts that when we do that the fractional $g-$Laplacian of the profile remains bounded: 

\begin{lem}\label{lem.difeo}
Let $\Phi$ be a $C^{1,1}$ diffeomorphism in $\R^n$ such that $\Phi=Id$ in $B_r^c$ for some $r>0$ and define
\begin{equation}\label{eq.vdifeo}
v(x):=(\Phi^{-1}(x)\cdot e_n)_+^s.
\end{equation}

Then $v\in\W^{s,G}_{\text{loc}}(\R^n)$ and $\gfls v=f$ weakly in $\Phi(\R^n_+)$ with
\begin{equation}\label{eq.boundf}
\|f\|_\infty\leq  C\left(\|D\Phi\|_\infty, \|D\Phi^{-1}\|_\infty,r\right)\|D^2\Phi\|_\infty.
\end{equation}
\end{lem}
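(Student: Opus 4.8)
The plan is to reduce everything, via the change of variables $x=\Phi(\xi)$, to the one-dimensional profile $u_0(x)=x_+^s$ of Lemmas \ref{1dsol} and \ref{1dprof}; note that $v(x)=u_0\big((\Phi^{-1}(x))_n\big)$. For the membership $v\in\W^{s,G}_{\mathrm{loc}}(\R^n)$ I would change variables $x=\Phi(\xi)$, $\bar x=\Phi(\bar\xi)$ in the Gagliardo--Orlicz modular over a bounded set: using that $\Phi$ is bi-Lipschitz, i.e. $\|D\Phi^{-1}\|_\infty^{-1}|\xi-\bar\xi|\le|\Phi(\xi)-\Phi(\bar\xi)|\le\|D\Phi\|_\infty|\xi-\bar\xi|$, that $|\det D\Phi|$ and $|\det D\Phi^{-1}|$ are bounded above and below, monotonicity of $G$ and the doubling bound \eqref{minmax2}, this modular is dominated by the one of $\xi\mapsto u_0(\xi_n)$, which is finite by Lemma \ref{1dprof}; the tail term is finite because $|v(x)|\le(1+|x|)^s$ (indeed $v(x)=(x_n)_+^s$ for $|x|>r$), as in the estimate following the definition of $\W^{s,G}(\Omega)$.

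For the equation, fix $\xi\in\R^n_+$ and set $A:=D\Phi(\xi)\in GL(n)$. Changing variables $y=\Phi(\xi+z)$ in \eqref{eq.gfls} yields
\[
\gfls v(\Phi(\xi))=\mathrm{p.v.}\int_{\R^n}g\!\left(\frac{u_0(\xi_n)-u_0(\xi_n+z_n)}{|\Phi(\xi+z)-\Phi(\xi)|^s}\right)\frac{|\det D\Phi(\xi+z)|}{|\Phi(\xi+z)-\Phi(\xi)|^{n+s}}\,dz .
\]
Subtract from the integrand the ``frozen-coefficient'' kernel obtained by replacing $\Phi(\xi+z)-\Phi(\xi)$ by $Az$ and $\det D\Phi(\xi+z)$ by $\det A$; the principal value of this frozen kernel is $|\det A|\lim_{\varepsilon\to0}h_\varepsilon(\xi,A)$, which is $0$ by Lemma \ref{1dprof}. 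Hence $\gfls v(\Phi(\xi))$ equals the integral of the \emph{difference} of the two kernels, and the key point is that this difference is \emph{absolutely} integrable on $\R^n$, with $L^1(\R^n)$ norm at most $C(\|D\Phi\|_\infty,\|D\Phi^{-1}\|_\infty,r)\,\|D^2\Phi\|_\infty$, uniformly in $\xi\in\R^n_+$; this is precisely \eqref{eq.boundf}. Since the right-hand side thus produced is bounded, $v$ is a strong solution in $\Phi(\R^n_+)$, and Corollary \ref{cor.strongweak} upgrades this to the weak formulation.

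To bound the kernel difference I would argue term by term, using: the second-order Taylor bound $|\Phi(\xi+z)-\Phi(\xi)-D\Phi(\xi)z|\le\tfrac12\|D^2\Phi\|_\infty\min\{|z|^2,\,2r|z|\}$ (the second form because $D^2\Phi$ is supported in $B_r$); the Lipschitz estimate $|\det D\Phi(\xi+z)-\det D\Phi(\xi)|\le C(n,\|D\Phi\|_\infty)\|D^2\Phi\|_\infty\min\{|z|,2r\}$; the lower bound $|\Phi(\xi+z)-\Phi(\xi)|\ge\|D\Phi^{-1}\|_\infty^{-1}|z|$; the $s$-H\"older bound $|u_0(\xi_n)-u_0(\xi_n+z_n)|\le|z_n|^s\le|z|^s$, which keeps the argument of every $g$ appearing below $\|D\Phi^{-1}\|_\infty^s$; and the Lipschitz continuity of $g$ on $[0,\|D\Phi^{-1}\|_\infty^s]$ (with constant $g'(\|D\Phi^{-1}\|_\infty^s)$, since $g'$ is increasing by \eqref{g4}) and of $t\mapsto t^{-s}$ on comparable scales. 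Together these bound each of the finitely many pieces of the kernel difference by $C(\|D\Phi\|_\infty,\|D\Phi^{-1}\|_\infty,r)\|D^2\Phi\|_\infty\min\{|z|^{1-n-s},|z|^{-n-s}\}$, which is integrable over $\R^n$.

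The main obstacle is making \eqref{eq.boundf} hold uniformly \emph{up to} $\{\xi_n=0\}$: there $u_0$ fails to be $C^2$, and a direct estimate of $\gfls v$ in the spirit of Lemma \ref{lema.1} degenerates like a negative power of $\xi_n$. Subtracting the frozen-coefficient term, which is $(s,g)$-harmonic by Lemma \ref{1dprof}, is exactly what cancels that singular contribution and leaves only the $\|D^2\Phi\|_\infty$-small, $\xi_n$-independent remainder. A subsidiary technical point is the principal-value bookkeeping---replacing the truncation by balls in $y$ with a comparable truncation in $z$ and matching it with the one in $h_\varepsilon$---which is legitimate because $u_0(\xi_n+\cdot_n)$ is smooth near the origin when $\xi_n>0$, and this supplies the required cancellation of the singular part, exactly as in the proof of Lemma \ref{1dsol}.
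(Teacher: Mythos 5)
Your proposal is correct and follows essentially the same strategy as the paper: change variables via $\Phi$, isolate a ``frozen-coefficient'' kernel whose principal value vanishes by Lemma~\ref{1dprof}, bound the kernel difference in $L^1$ by $C\|D^2\Phi\|_\infty$ uniformly in $\xi$, and then pass from the resulting $L^1_{\mathrm{loc}}$-convergence of the truncated integrals to a weak solution (the paper uses Lemma~\ref{lem.lim} directly; you use Corollary~\ref{cor.strongweak}, which is the same machinery). One minor difference worth pointing out: you freeze \emph{both} the $s$-scale inside $g$ (replacing $|\Phi(\xi+z)-\Phi(\xi)|^s$ by $|D\Phi(\xi)z|^s$) and the outer density/Jacobian, so that the surviving frozen term is literally $|\det D\Phi(\xi)|\,h_\varepsilon(\xi,D\Phi(\xi))$ as in Lemma~\ref{1dprof}; the paper's decomposition into $I_1+I_2$ keeps $|\Phi(\bar x)-\Phi(\bar y)|^s$ inside the $g$ in the term it discards via Lemma~\ref{1dprof}, so its invocation of that lemma is a little imprecise and in fact requires the extra linearisation you made explicit. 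Your version is therefore a bit cleaner on this point, though the substance and the estimates (Taylor/Lipschitz bounds on $\Phi$, $s$-H\"older bound on $u_0$, Lipschitz bound on $g$ from \eqref{g4}) are the ones the paper delegates to \cite{IMS}.
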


\begin{proof}

We want to show that 
\begin{equation}\label{eq.h}
h_\varepsilon(x):=\int_{\{|\Phi^{-1}(x)-\Phi^{-1}(y)|>\varepsilon\}}g\left(\frac{v(x)-v(y)}{|x-y|^s}\right)\frac{dy}{|x-y|^{n+s}}
\end{equation}
converges in $L^1(K)$ for any compact set $K\subset\R^n_+$ to some $f\in L^\infty(\R^n_+)$ satisfying \eqref{eq.boundf}. Once this is proven Lemma \ref{lem.lim} gives the result. 

Let us make the change of variables $\Phi(\bar{x})=x$, denote $J(\cdot)=|\text{det}D\Phi(\cdot)|$ and write \eqref{eq.h} as
\begin{align*}
h_\varepsilon(x) =& \int_{B_\varepsilon^c(\bar{x})}g\left(\frac{v(\Phi(\bar{x}))-v(\Phi(\bar{y}))}{|\Phi(\bar{x})-\Phi(\bar{y})|^s}\right)\frac{J(\bar{y})d\bar{y}}{|\Phi(\bar{x})-\Phi(\bar{y})|^{n+s}} \nonumber \\		
				= & \int_{B_\varepsilon^c(\bar{x})}g\left(\frac{u_0(\bar x)-u_0(\bar y)}{|\Phi(\bar{x})-\Phi(\bar{y})|^s}\right)\frac{\zeta(\bar{x},\bar{y})d\bar{y}}{|D\Phi(\bar{x})(\bar{x}-\bar{y})|^{n+s}} \nonumber \\
				 & +\int_{B_\varepsilon^c(\bar{x})}g\left(\frac{u_0(\bar x)-u_0(\bar y)}{|\Phi(\bar{x})-\Phi(\bar{y})|^s}\right)\frac{J(\bar{x})d\bar{y}}{|D\Phi(\bar{x})(\bar{x}-\bar{y})|^{n+s}} 
				 :=  I_1+I_2
\end{align*}
with
\[	
\zeta(\bar{x},\bar{y}):=\frac{|D\Phi(\bar{x})(\bar{x}-\bar{y})|^{n+s}}{|\Phi(\bar{x})-\Phi(\bar{y})|^{n+s}}J(\bar{y})-J(\bar{x}).
\]

Now, we use ellipticity and the fact that 
\[
\frac{|D\Phi(\bar{x})(\bar{x}-\bar{y})|^{s\Lambda}}{|\Phi(\bar{x})-\Phi(\bar{y})|^{s\Lambda}}\leq C(\|D\Phi\|_\infty,\|D\Phi^{-1}\|_\infty).
\]
to get
\begin{align*}
|I_1| & \leq \int_{B_\varepsilon^c(\bar{x})}g\left(\frac{u_0(\bar x)-u_0(\bar y)}{|\Phi(\bar{x})-\Phi(\bar{y})|^s}\right)\frac{|\zeta(\bar{x},\bar{y})|d\bar{y}}{|D\Phi(\bar{x})(\bar{x}-\bar{y})|^{n+s}}  \\
	& \leq \int_{B_\varepsilon^c(\bar{x})}\frac{(u_0(\bar x)-u_0(\bar y))^\Lambda}{|\Phi(\bar{x})-\Phi(\bar{y})|^{s\Lambda}}\frac{|\zeta(\bar{x},\bar{y})|d\bar{y}}{|D\Phi(\bar{x})(\bar{x}-\bar{y})|^{n+s}} \\
	& \leq \int_{B_\varepsilon^c(\bar{x})}\frac{(u_0(\bar x)-u_0(\bar y))^\Lambda}{|D\Phi(\bar{x})(\bar{x}-\bar{y})|^{n+sp^+}}|\zeta(\bar{x},\bar{y})|d\bar{y}. 
\end{align*}
Therefore $I$ can be bound exactly as the second term of (3.6) in \cite{IMS}.

$I_2$ on the other hand vanishes identically as $\varepsilon\rightarrow0^+$ by means of Lemma \ref{1dprof}; indeed, since $D\Phi(\R^n)$ is a compact subset of $GL(n)$ the integral vanishes uniformly in any compact set $\Phi^{-1}(K)\subset\R^n_+$, therefore in any compact set $K\subset\Phi(\R^n_+)$. 
\end{proof}

We will need the following Lemma regarding the geometric properties of a $C^{1,1}$ domain (cf. \cite{HKS}). It essentially says that any point on $\partial\Omega$ has an interior and an exterior tangent ball and the distance function behaves like $|\cdot|$ close to such a point inside $\Omega$. Recall that the distance function to $\partial\Omega$ is given by $d(x):=\text{dist}(x,\Omega^c)$. 

\begin{lem}\label{lem.dist}
Let $\Omega$ be an bounded domain in $\R^n$ with $C^{1,1}$ boundary. Then, there exists $\rho>0$ such that for any $x_0\in\partial \Omega$ there exist $x_1,x_2\in\R^n$ in the normal line to $\partial\Omega$ at $x_0$ such that
\begin{enumerate}

\item $B_\rho(x_1)\subset\Omega$ and $B_\rho(x_2)\subset\Omega^c$;

\item $\overline B_\rho(x_1)\cap\overline B_\rho(x_2)=\{x_0\}$;

\item $d(x)=|x-x_0|$ for any $x=(1-t)x_0+tx_1\:,t\in[0,1]$.

\end{enumerate}
\end{lem}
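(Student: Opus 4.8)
The plan is to establish the three conclusions by invoking the standard theory of $C^{1,1}$ domains, which guarantees a uniform interior and exterior ball condition. Concretely, I would first recall that a bounded domain $\Omega$ with $C^{1,1}$ boundary admits a radius $\rho>0$ such that at every boundary point $x_0\in\partial\Omega$ there is a ball of radius $\rho$ contained in $\Omega$ and a ball of radius $\rho$ contained in $\Omega^c$, both touching $\partial\Omega$ exactly at $x_0$; this is precisely the content of the reference \cite{HKS}. The proof of that fact proceeds by flattening the boundary locally: near $x_0$ one writes $\partial\Omega$ as the graph of a function with Lipschitz gradient, and the Lipschitz constant of the gradient controls the curvature uniformly, producing a lower bound $\rho$ on the radii of the tangent balls that is independent of $x_0$ by compactness of $\partial\Omega$.

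Having fixed such a $\rho$, for a given $x_0\in\partial\Omega$ let $\nu$ be the inner unit normal to $\partial\Omega$ at $x_0$ (well defined since $\partial\Omega\in C^{1,1}$), and set $x_1:=x_0+\rho\nu$ and $x_2:=x_0-\rho\nu$. Both lie on the normal line to $\partial\Omega$ at $x_0$ by construction. Conclusion (1), that $B_\rho(x_1)\subset\Omega$ and $B_\rho(x_2)\subset\Omega^c$, is then exactly the interior/exterior ball condition. For conclusion (2), since $x_0\in\overline B_\rho(x_1)\cap\overline B_\rho(x_2)$ trivially (it is at distance $\rho$ from each center), and since $|x_1-x_2|=2\rho$ forces the two closed balls to intersect in at most one point (the midpoint $x_0$), we get $\overline B_\rho(x_1)\cap\overline B_\rho(x_2)=\{x_0\}$.

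For conclusion (3), take any $x=(1-t)x_0+tx_1$ with $t\in[0,1]$, i.e. a point on the segment from $x_0$ inward along the normal, at distance $t\rho\le\rho$ from $x_1$. On the one hand $d(x)\le |x-x_0|=t\rho$ since $x_0\in\Omega^c$'s closure and more precisely $x_0\in\partial\Omega\subset\overline{\Omega^c}$, so the distance to $\Omega^c$ is at most $|x-x_0|$. On the other hand, $x\in B_\rho(x_1)\subset\Omega$, and for any $y\in\Omega^c$ we have $|x_1-y|\ge\rho$ (because $B_\rho(x_1)\subset\Omega$), hence $|x-y|\ge |x_1-y|-|x_1-x|\ge\rho-(1-t)\rho=t\rho$; taking the infimum over $y\in\Omega^c$ gives $d(x)\ge t\rho=|x-x_0|$. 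Combining the two inequalities yields $d(x)=|x-x_0|$.

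The only genuine content here lies in the existence of the uniform radius $\rho$, which is a classical consequence of the $C^{1,1}$ regularity of the boundary together with compactness, and which we simply cite from \cite{HKS}; everything else is elementary metric geometry with the two tangent balls. Thus I expect no real obstacle, and the write-up should be short, essentially reproducing the above two paragraphs with the citation doing the heavy lifting for the interior/exterior ball condition.
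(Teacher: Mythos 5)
Your proof is correct, and it follows exactly the route the paper intends: the paper itself gives no proof of Lemma \ref{lem.dist}, merely citing \cite{HKS} for the uniform interior/exterior ball property of $C^{1,1}$ domains, and your write-up supplies precisely the missing elementary bookkeeping (the construction of $x_1,x_2$, the tangency, and the two-sided estimate for $d(x)$ via the triangle inequality against $B_\rho(x_1)\subset\Omega$). The only cosmetic remark is that since $\Omega$ is open, $\Omega^c$ is already closed, so $x_0\in\partial\Omega\subset\Omega^c$ directly, and you need not pass through $\overline{\Omega^c}$.
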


We are in position to show that $\gfls d^s$ is bounded in a neighborhood of $\partial\Omega$:
\begin{prop}\label{prop.distance}
Let $\Omega$ be a bounded domain in of $\R^n$ with $C^{1,1}$ boundary. Then there exists $\rho>0$ such that 
\[
\gfls d^s=f\quad\text{ weakly in }\Omega_\rho
\]
for some $f\in L^\infty(\Omega_\rho)$ with $\Omega_\rho:=\{x\in\Omega:d(x)<\rho\}$.
\end{prop}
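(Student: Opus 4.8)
\medskip
\noindent\textbf{Proof strategy.} The plan is to localize near $\partial\Omega$, flatten the boundary by a $C^{1,1}$ diffeomorphism so that $d^s$ coincides \emph{locally} with the half-space profile handled in Lemma~\ref{lem.difeo}, and then absorb the discrepancy between $d^s$ and that profile into a bounded tail term, patching afterwards with a partition of unity.

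\emph{Step 1 (local flattening).} I would start from the radius $\rho_0>0$ given by Lemma~\ref{lem.dist} and recall that, since $\partial\Omega\in C^{1,1}$, the signed distance to $\partial\Omega$ is $C^{1,1}$ in a neighborhood of $\partial\Omega$. Hence for each $z_0\in\partial\Omega$ one can produce $r=r(z_0)>0$, a $C^{1,1}$ diffeomorphism $\Phi=\Phi_{z_0}$ of $\R^n$ with $\Phi=Id$ in $B_r(z_0)^c$, and a neighborhood $U=U_{z_0}$ of $z_0$, such that $\Phi^{-1}(x)\cdot e_n$ is the signed distance to $\partial\Omega$ for $x\in U$; in particular $\Omega\cap U\subset\Phi(\R^n_+)$ and, with the notation of \eqref{eq.vdifeo},
\[
d^s(x)=\big(\Phi^{-1}(x)\cdot e_n\big)_+^s=:v(x)\qquad\text{for }x\in U
\]
(both sides vanish on $U\setminus\overline\Omega$). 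Fix open $V=V_{z_0}$ with $\overline V\subset U$. By compactness of $\partial\Omega$ finitely many $V_1,\dots,V_N$, with data $\Phi_i,U_i,v_i$, cover $\partial\Omega$; I would then pick $\rho\in(0,\rho_0)$ so small that $\Omega_\rho\subset\bigcup_i(V_i\cap\Omega)$. Note $d^s$ is Lipschitz away from $\partial\Omega$ and equals $v_i\in\W^{s,G}_{\text{loc}}(\R^n)$ near it, so $d^s\in\W^{s,G}_{\text{loc}}(\R^n)$ and the weak formulation makes sense.

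\emph{Step 2 (the equation on each $V_i\cap\Omega$).} Set $w_i:=d^s-v_i$; it vanishes on $U_i$ and satisfies $-C(1+|x|)^s\le w_i\le(\text{diam}\,\Omega)^s$ on $\R^n$, so it has admissible growth at infinity. Given $\varphi\in W^{s,G}_0(V_i\cap\Omega)$, I would apply Lemma~\ref{nonlocal.behavior} with base $v_i$ and perturbation $w_i$ (supported in $\R^n\setminus U_i$, away from $\supp\varphi\subset U_i$) to get
\[
\intr g\!\left(\frac{d^s(x)-d^s(y)}{|x-y|^s}\right)\frac{\varphi(x)-\varphi(y)}{|x-y|^s}\,d\mu=\intr g\!\left(\frac{v_i(x)-v_i(y)}{|x-y|^s}\right)\frac{\varphi(x)-\varphi(y)}{|x-y|^s}\,d\mu+\int_{V_i\cap\Omega}T_i(x)\varphi(x)\,dx,
\]
where
\[
T_i(x)=2\int_{\R^n\setminus U_i}\left[g\!\left(\frac{v_i(x)-d^s(y)}{|x-y|^s}\right)-g\!\left(\frac{v_i(x)-v_i(y)}{|x-y|^s}\right)\right]\frac{dy}{|x-y|^{n+s}}.
\]
By Lemma~\ref{lem.difeo} (using $V_i\cap\Omega\subset\Phi_i(\R^n_+)$), the first integral on the right equals $\int f_i\varphi$ with $f_i\in L^\infty$, $\|f_i\|_\infty\le C\|D^2\Phi_i\|_\infty$. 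For $T_i$: since $\overline V_i\subset U_i$ one has $|x-y|\ge\delta_i>0$ on the domain of integration; splitting it into $\{|x-y|\le R\}$ and its complement, and using that $v_i$ is bounded on $V_i$, that $d^s$ is globally bounded, and that $v_i(y)\le C|y|^s\le C|x-y|^s$ for $|y|$ large, one bounds $|T_i(x)|$ by a constant uniformly for $x\in V_i$. Hence $\gfls d^s=f_i+T_i=:\tilde f_i$ weakly in $V_i\cap\Omega$, with $\tilde f_i\in L^\infty(V_i\cap\Omega)$.

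\emph{Step 3 (patching).} Take a partition of unity $\{\eta_i\}$ subordinate to $\{V_i\}$ with $\sum_i\eta_i\equiv1$ on $\Omega_\rho$. For $\varphi\in W^{s,G}_0(\Omega_\rho)$ write $\varphi=\sum_i\eta_i\varphi$ with $\eta_i\varphi\in W^{s,G}_0(V_i\cap\Omega)$, apply Step 2 to each summand and add: since $\sum_i\eta_i\varphi=\varphi$ identically, the left-hand sides combine into the one with $\varphi$, and the right-hand sides yield $\int_{\Omega_\rho}f\varphi$ with $f:=\sum_i\eta_i\tilde f_i\in L^\infty(\Omega_\rho)$ (a finite sum of bounded functions). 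This is exactly $\gfls d^s=f$ weakly in $\Omega_\rho$.

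I expect the main obstacle to be Step 1 together with the uniform bound on $T_i$ in Step 2: building a $C^{1,1}$ flattening for which $\Phi^{-1}\cdot e_n$ is the distance function near $\partial\Omega$, which is the identity outside a ball, and which has controlled second derivatives (this is what feeds the bound on $f_i$); and then checking that the tail perturbation $w_i=d^s-v_i$, which grows like $|x|^s$ at infinity, contributes only a bounded amount to $\gfls d^s$ on $V_i\cap\Omega$, uniformly in $x$. The remaining steps are bookkeeping on top of Lemmas~\ref{lem.dist}, \ref{lem.difeo} and \ref{nonlocal.behavior}.
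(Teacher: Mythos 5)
Your proposal is correct and follows essentially the same route as the paper: reduce to a neighborhood of a boundary point via a finite cover and a partition of unity, flatten the boundary by a $C^{1,1}$ diffeomorphism so that $d^s$ agrees locally with the profile $v$ of Lemma~\ref{lem.difeo}, and absorb the discrepancy at far distances into a bounded tail. The one organizational difference is how the decomposition is packaged: the paper performs the change of variables inside $h_\ve(x)$ and splits the integral into a ``local'' piece $f_{1,\ve}$ (which converges in $L^1_{\mathrm{loc}}$ to a bounded limit by Lemma~\ref{lem.difeo}/\ref{1dprof}) and a ``far'' piece $f_2$ bounded directly, concluding via Lemma~\ref{lem.lim}; you instead invoke Lemma~\ref{nonlocal.behavior} with $u=v_i$ and the compactly-separated perturbation $w_i=d^s-v_i$ to obtain $\gfls d^s = f_i + T_i$ outright, which is a slightly more modular way of encoding the same splitting. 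Your verification of the hypotheses (positive distance of $\supp w_i$ from $V_i\cap\Omega$, the $|x|^s$ growth of $w_i$ being admissible, and the boundedness of $T_i$ from $|x-y|\ge\delta_i$ together with $v_i(y)\lesssim|x-y|^s$ at infinity) is sound, and your patching argument via $\sum_i\eta_i\varphi=\varphi$ and linearity of the weak form in the test function is the implicit content of the paper's opening sentence of the proof.
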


\begin{proof}

By taking a finite covering of $\Omega_\rho$ by balls centered at points in $\partial\Omega$ and a partition of unity, it is enough to show that $\gfls d^s=f$ holds weakly in $\Omega\cap B_{2\rho}$ with $\rho$ small enough, depending only in the geometry of $\Omega$. To that aim, we are going to flatten the boundary of $\Omega$ near the origin: let $\Phi(\bar{x})=x$ be a $C^{1,1}$ diffeomorphism such that $\Phi=Id$ in $B_{4\rho}^c$ such that
\[
\Omega\cap B_{2\rho}\subset\subset\Phi(B_{3\rho}\cap\R^n_+),\quad d(\Phi(\bar{x}))=(\bar{x}_n)_+\text{ for }\bar{x}\in B_{3\rho}.
\]

We will show that 
\[
h_\varepsilon(x):=\int_{\{|\Phi^{-1}(x)-\Phi^{-1}(y)|>\varepsilon\}}g\left(\frac{d^s(x)-d^s(y)}{|x-y|^s}\right)\frac{dy}{|x-y|^{n+s}}\longrightarrow f\text{ in } L^1_{\text{loc}}(\Omega\cap B_{2\rho}) 
\]
for a function $f\in L^\infty$ and the result will follow from Lemma \ref{lem.lim}. 

Setting $\bar{x}=\Phi^{-1}(x)$ and changing variables we can compute (with $J$ defined as in Lemma \ref{lem.difeo}) we can write
\begin{align*}
h_\varepsilon(x) & =\int_{B_\varepsilon^c(x)}g\left(\frac{d^s(\Phi(\bar{x}))-d^s(\Phi(\bar{y}))}{|\Phi(\bar{x})-\Phi(\bar{y})|^s}\right)\frac{J(\bar{y})d\bar{y}}{|\Phi(\bar{x})-\Phi(\bar{y})|^{n+s}}\\
				 & = \int_{B_\varepsilon^c(x)\cap B_{3\rho}}g\left(\frac{d^s(\Phi(\bar{x}))-d^s(\Phi(\bar{y}))}{|\Phi(\bar{x})-\Phi(\bar{y})|^s}\right)\frac{J(\bar{y})d\bar{y}}{|\Phi(\bar{x})-\Phi(\bar{y})|^{n+s}}\\
				 & +\int_{B_{3\rho}^c}g\left(\frac{d^s(\Phi(\bar{x}))-d^s(\Phi(\bar{y}))}{|\Phi(\bar{x})-\Phi(\bar{y})|^s}\right)\frac{J(\bar{y})d\bar{y}}{|\Phi(\bar{x})-\Phi(\bar{y})|^{n+s}} \\
				 & =\int_{B_\varepsilon^c(\bar{x})}g\left(\frac{u_0(\bar{x}_n)-u_0(\bar{y}_n)}{|\Phi(\bar{x})-\Phi(\bar{y})|^s}\right)\frac{J(\bar{x})d\bar{y}}{|\Phi(\bar{x})(\bar{x}-\bar{y})|^{n+s}} \\
				 & +\int_{B_{3\rho}^c}\left( g\left(\frac{d^s(\Phi(\bar{x}))-d^s(\Phi(\bar{y}))}{|\Phi(\bar{x})-\Phi(\bar{y})|^s}\right)-g\left(\frac{u_0(\bar{x}_n)-u_0(\bar{y}_n)}{|\Phi(\bar{x})-\Phi(\bar{y})|^s}\right)\right)\frac{J(\bar{y})d\bar{y}}{|\Phi(\bar{x})-\Phi(\bar{y})|^{n+s}} \\
				 & =f_{1,\varepsilon}(\bar{x})+f_2(\bar{x}).
\end{align*}
 
As in Lemma \ref{lem.difeo} (and using Lemma \ref{1dprof}) for $f_1 \in L^\infty_{\text{loc}}(\R^n_+)$ we have that 
\[
\lim_{\varepsilon\rightarrow0^+}f_{1,\varepsilon}=f_1\text{ in }L^1_{\text{loc}}(\Omega\cap B_{2\rho}).
\] 

It remains to bound $f_2$. To do that, we note that 
\[
\text{dist}(\Phi^{-1}(\Omega\cap B_{2\rho}),\Phi(B_{3\rho}^c)\geq \theta>0
\]
for some $\theta$ depending only on $\rho$ and $\Phi$. Now using that $d^s\circ\Phi$ is $s$-H\"older continuous (and so is $u_0$) and the properties of $\Phi$ we have
\[
\left|g\left(\frac{d^s(\Phi(\bar{x}))-d^s(\Phi(\bar{y}))}{|\Phi(\bar{x})-\Phi(\bar{y})|^s}\right)-g\left(\frac{u_0(\bar{x}_n)-u_0(\bar{y}_n)}{|\Phi(\bar{x})-\Phi(\bar{y})|^s}\right)\right| \leq C 
\]
so that 
\[
|f_2(\bar{x})|\leq C\int_{B_{3\rho}^c}\frac{d\bar{y}}{|\Phi(\bar{x})-\Phi(\bar{y})|^{n+s}} \leq C
\]
and we get the result.
\end{proof}

Next, we want to construct the appropriate barriers to get bounds on our solutions in terms if the distance function $d^s$. We start by considering functions whose fractional $g-$Laplacian is constant in the unit ball.

\begin{lem}\label{lem.sol1}
The equation
\begin{equation}\label{eq.sol1}
\left\{
\begin{array}{rcl}
 \gfls v &=  & 1 \text{ in }B_1 \\
  v &= &0 \text{ in }B_1^c
\end{array}
\right.
\end{equation}
has a unique solution $v_0\in W^{s,G}_0(\Omega)$. Moreover, $v_0\in L^\infty(\R^n)$, is radially symmetric, nonincreasing and for any $r\in(0,1)$ it holds that $\inf_{B_r}v_0>0$.
\end{lem}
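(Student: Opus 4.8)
The plan is to obtain $v_0$ via the direct method in the calculus of variations and then extract its qualitative properties. First I would introduce the energy functional
\[
\mathcal{J}(v):=\intr G\left(\frac{v(x)-v(y)}{|x-y|^s}\right)\,d\mu-\int_{B_1}v\,dx
\]
on the reflexive Banach space $W^{s,G}_0(B_1)$ (reflexivity holds because of the $\Delta_2$ condition \eqref{delta2}, which is in force thanks to \eqref{L}). Coercivity of $\mathcal{J}$ follows from the lower growth bound \eqref{growthg2} together with a fractional Orlicz–Poincaré inequality on the bounded domain $B_1$, which controls $\|v\|_{G,B_1}$ and hence $\int_{B_1}v\,dx$ by the Gagliardo $G$-seminorm; weak lower semicontinuity follows from convexity of $G$ (guaranteed here, indeed $G$ is convex and so is $g$ by \eqref{g4}) and Fatou's lemma along a weakly convergent minimizing sequence. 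Thus a minimizer $v_0$ exists, and strict convexity of $G$ (it is strictly increasing and convex, with $G''>0$) makes it unique. Computing the Euler–Lagrange equation gives exactly the weak formulation \eqref{eq.wsol} with $f\equiv1$, so $v_0$ is the unique weak solution of \eqref{eq.sol1}.

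Next I would establish boundedness: $v_0\in L^\infty(\R^n)$. Since $v_0$ vanishes outside $B_1$, this is an interior matter, and it can be obtained either by a De Giorgi–type truncation/Moser iteration using the weak formulation with test functions $(v_0-k)_+$ and the energy estimates coming from \eqref{growthg1}–\eqref{growthg2}, or — more economically in the present setup — by comparison: for $M$ large enough the constant-in-$B_1$ competitor obtained from the explicit radial supersolution (e.g. a suitable multiple of $(1-|x|^2)_+^s$-type profile, whose fractional $g$-Laplacian is bounded on $B_1$ by Lemma \ref{lema.1} after cutting off, combined with the behavior near $\partial B_1$ controlled as for $u_0$) lies above $v_0$, and symmetrically $v_0\ge 0$ by testing with $(v_0)_-$ and using $f\ge0$. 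So $0\le v_0\le M$.

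For radial symmetry and monotonicity I would invoke uniqueness together with the invariance of the problem: for any rotation $\mathcal{R}\in O(n)$, $v_0\circ\mathcal{R}$ is again a weak solution of \eqref{eq.sol1} (the operator $\gfls$ and the data are rotation-invariant), hence equals $v_0$; therefore $v_0$ is radial. Monotonicity (nonincreasing in $|x|$) I would get by polarization/moving planes for the nonlocal operator, or directly: the minimizer of $\mathcal{J}$ among radial functions coincides with $v_0$, and a rearrangement argument — using the Pólya–Szegő-type inequality for fractional Orlicz seminorms (available in this setting) and the fact that the linear term $\int_{B_1}v\,dx$ is invariant under radial decreasing rearrangement on $B_1$ for nonnegative $v$ — shows the minimizer must already be radially nonincreasing. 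Finally, $\inf_{B_r}v_0>0$ for every $r\in(0,1)$: since $v_0\ge0$, $v_0\not\equiv0$ (as $\mathcal{J}(v_0)<0=\mathcal{J}(0)$ for suitable test competitors), and $v_0$ is a supersolution of $\gfls v_0\ge0$ in $B_1$ with $v_0\ge0$ in $\R^n$, the weak Harnack inequality of Theorem \ref{harnack} (applied on a ball $B_{R/3}$ with $R$ chosen so that $B_{R/4}\supset \overline{B_r}$, taking $K=0$) yields $\inf_{B_r}v_0\ge \sigma R^s g^{-1}\big(\pint_{B_R\setminus B_{R/2}}g(R^{-s}v_0)\,dx\big)>0$, since the averaged quantity is strictly positive because $v_0$ is radial, positive somewhere, and nonincreasing.

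The main obstacle I expect is not the existence/uniqueness (standard direct method once reflexivity and the Poincaré inequality in $W^{s,G}_0$ are in hand) but rather the $L^\infty$ bound and the strict positivity $\inf_{B_r}v_0>0$: these require either a careful nonlocal De Giorgi iteration adapted to Orlicz growth, or a clean comparison argument with an explicit bounded supersolution whose $\gfls$ one can actually estimate — and producing such a barrier with controlled fractional $g$-Laplacian (mirroring the role of $u_0(x)=x_+^s$ and Lemma \ref{lema.1}) is the delicate point. The monotonicity via rearrangement also hinges on having the Pólya–Szegő inequality in the fractional Orlicz framework, which is cited in the introduction (\cite{DNFBS}) and can be quoted.
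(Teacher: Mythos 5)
The existence/uniqueness via the direct method, the radial symmetry via rotational invariance (Lemma \ref{lem.rotation}), and the monotonicity via the P\'olya--Szeg\H{o} principle of \cite{DNFBS} all agree with the paper. Two of the remaining steps, however, do not go through as you have written them.

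\textbf{Strict positivity.} You apply Theorem \ref{harnack} centered at the origin with $R\geq 4r$ (so that $B_{R/4}\supset\overline{B_r}$). For $r$ close to $1$ this forces $R$ close to $4$, and then the annulus $B_R\setminus B_{R/2}$ lies entirely outside $B_1$, where $v_0\equiv 0$. The averaged quantity $\pint_{B_R\setminus B_{R/2}}g(R^{-s}v_0)\,dx$ therefore vanishes and the inequality is vacuous; moreover the hypothesis $(-\Delta_g)^s v_0\geq -K$ in $B_{R/3}$ is only guaranteed inside $B_1$. A translated and iterated version of Harnack near $\partial\{v_0>0\}$ might be made to work, but what you wrote does not. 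The paper instead argues by contradiction using scaling: if $r_0:=\inf\{r:\psi(r)=0\}<1$, one rescales $\tilde v_0(x)=v_0(r_0 x)$, checks via Lemma \ref{lem.scaling} and \eqref{minmax1} that $(-\Delta_g)^s\tilde v_0\leq r_0^s<1$ in $B_1$ while $\tilde v_0=0$ in $B_1^c$, and invokes comparison (Proposition \ref{compara}) to get $\psi(r)\geq\psi(r_0 r)$; taking $r=r_0$ gives $\psi(r_0^2)=0$, contradicting the definition of $r_0$.

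\textbf{Boundedness.} Your proposed barrier of $(1-|x|^2)_+^s$ type is the fractional-\emph{$p$}-Laplacian profile; for a general $g$ satisfying only \eqref{L} and \eqref{g4} its fractional $g$-Laplacian is not computable or controllable, so that route does not produce a usable supersolution. The paper's barrier is $w(x)=\min\{(2-x_n)_+^s,\,5^s\}$: on $B_2$ it coincides with $u_0(2-x_n)$, which is $(s,g)$-harmonic in $\{x_n<2\}$ by Lemma \ref{1dprof}, and Lemma \ref{nonlocal.behavior} then expresses $\gfls w$ in $B_1$ as an explicit integral over $\{y_n\leq -3\}$, which is a positive continuous function bounded below; rescaling $w$ by a constant via \eqref{minmax1} and applying the comparison principle yields $0\leq v_0\leq C$. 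A De Giorgi iteration adapted to Orlicz growth is plausible in principle, but you would need to develop it, whereas the barrier argument is essentially one line once the $1$-dimensional profile results from Section \ref{sec.bdry} are in place (which is precisely why Lemma \ref{lem.sol1} sits after Lemma \ref{1dprof} in the paper).
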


\begin{proof}
First, weak solutions of \eqref{eq.sol1} are constructed as minimizers in $W^{s,G}_0(B_1)$ of  
\[
J(v):=\iint G\left(\frac{v(x)-v(y)}{|x-y|^s}\right)\frac{dxdy}{|x-y|^n}-\int_{B_1}v\:dx
\]
so existence and uniqueness follow from the direct method of the Calculus of Variations. Thanks to the rotational invariance of the equation given in Lemma \ref{lem.rotation} we also have $v_0(x)=\psi(|x|)$ for some $\psi:\R_+\longrightarrow\R_+$. Further, by the P\'olya-Szeg\"o principle proved in \cite{DNFBS} we have that $\psi$ is nonincreasing. 

Now let
\[
r_0:=\inf\{r\in\R_+:\psi(r)=0\}
\]
and let us show the last assertion by proving that $r_0=1$. It is clear that $r_0\in (0,1]$, as $\psi$ vanishes for $r>1$. Let us then assume by contradiction that $r_0\in(0,1)$. Then, 
\[
\left\{
\begin{array}{rcl}
 \gfls v_0 &=  & 1 \text{ in }B_{r_0} \\
  v_0 &= &0 \text{ in }B_{r_0}^c.
\end{array}
\right.
\]

Next denote $\tilde{v}_0(x):=v_0(r_0x)$ and notice that Lemma \ref{lem.scaling} gives that
\[
\left\{
\begin{array}{rcl}
 (-\Delta_{g_{r_0}})^s \tilde{v}_0 &=  & 1 \text{ in }B_1 \\
  \tilde{v}_0 &= &0 \text{ in }B_1^c
\end{array}
\right.
\]
which implies 
\[
\left\{
\begin{array}{rcl}
 (-\Delta_{g})^s \tilde{v}_0 &\leq & r_0^s<1\text{ in }B_1 \\
  \tilde{v}_0 &= &0 \text{ in }B_1^c
\end{array}
\right.
\]
and the comparison principle implies that $v_0(x)\geq v_0(r_0x)$, or $\psi(r)\geq\psi(r_0r)$ for any $r\in(0,r_0)$. In particular, 
\[
0\leq \psi(r_0^2)\leq\psi(r_0)=0
\]
so that $\psi(r_0^2)=0$ which is a contradiction with the definition of $r_0$. 

It remains to show that $v_0\in L^\infty(\R^n)$. Let
\[
w(x):=\min\{(2-x_n)_+^s,5^s\}\in C^s(\R^n)\cap\W^{s,G}(B_1) 
\]
and notice that, for $x\in B_2,\:w(x)=u_0(2-x_n)$ with $u_0$ as defined in \eqref{eq.1dprof}. Then, we can apply Lemma \ref{nonlocal.behavior} in $B_{3/2}$ with 
\[
u(x)=u_0(2-x_n),\:f\equiv0\text{ and }v(x)= (u_0(2-x_n)-5^s)_+
\]
to get, using Lemma \ref{1dprof},
\[
\gfls w(x)=2\int_{\{y_n\leq -3\}}\left[g\left(\frac{(2-x_n)^s_+-5^s}{|x-y|^s} \right) - g\left(\frac{(2-x_n)^s_+ -(2-y_n)^s_+}{|x-y|^s} \right) \right]\frac{ dy}{|x-y|^{n+s}} 
\]
weakly in $B_1$. The right hand side of this expression is a positive continuous function of $x$ and hence bounded below in $B_1$ by some positive constant $\eta$, i.e. $\gfls w\geq\eta>0$ weakly in $B_1$. Now, choose $c>0$ such  that $\min\{c^\lambda,c^\Lambda\}=\eta^{-1}$ and use \eqref{minmax1} to get $\gfls (c w)\geq1$,  which means that $\gfls (c w)\geq \gfls v_0$.  This, together with the fact that $v_0=0\leq c w$ in $B_1^c$
gives, through Proposition \ref{compara}, $0\leq v_0\leq c w$ in $\R^n$ and hence $0\leq v_0\leq \frac{5^s}{c}$ in $\R^n$ as desired.
\end{proof}

As a consequence of the previous lemma we obtain that function with bounded fractional $g-$Laplacian are themselves bounded.

\begin{prop}\label{prop.bound}
Let $u\in W^{s,G}_0(\Omega)$ be a weak solution of $|\gfls u|\leq K$ in $\Omega$ for some $K>0$. Then
\begin{equation}\label{eq.bound}
\|u\|_{L^\infty(B_1)}\leq C
\end{equation}
where $C$ is a positive constant depending only on $s,n,\lambda,\Lambda, K$ and diam$(\Omega)$. 
\end{prop}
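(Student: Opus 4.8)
The plan is to use the solution $v_0$ of \eqref{eq.sol1} from Lemma \ref{lem.sol1} as a building block for a global barrier, exploiting the scaling properties of the operator recorded in Lemma \ref{lem.scaling} together with the comparison principle of Proposition \ref{compara}. Since $\Omega$ is bounded, we may assume after a translation and dilation that $\Omega\subset B_1$; indeed, pick $R>0$ with $\Omega\subset B_R$, and set $w(x):=v_0(x/R)$. By Lemma \ref{lem.scaling}, $w$ solves $(-\Delta_{g_R})^s w = 1$ in $B_R$ for the rescaled Young function $g_R$, and using the ellipticity bounds \eqref{L}–\eqref{growthg1} (equivalently \eqref{minmax1}) one converts this into a lower bound $\gfls w \geq c_R > 0$ weakly in $B_R$, hence weakly in $\Omega$, where $c_R$ depends only on $s$, $\lambda$, $\Lambda$ and $R\sim\operatorname{diam}(\Omega)$.

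\textbf{Construction of the barrier and comparison.} Multiplying $w$ by a suitable constant: choose $\kappa>0$ with $\min\{\kappa^\lambda,\kappa^\Lambda\}\, c_R \geq K$, so that by \eqref{minmax1} the function $\Psi:=\kappa\, w$ satisfies $\gfls \Psi \geq K \geq \gfls u$ weakly in $\Omega$ (using the subsolution inequality \eqref{eq.wsubsol} for $u$ with $|\gfls u|\le K$). Moreover $u=0\le \Psi$ in $\Omega^c$ since $\Psi\ge 0$ everywhere (recall $v_0\ge 0$ by Lemma \ref{lem.sol1}) and $u\in W^{s,G}_0(\Omega)$ vanishes outside $\Omega$. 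Proposition \ref{compara} then yields $u\leq \Psi$ in $\R^n$. Applying the same argument to $-u$, which satisfies $\gfls(-u)\geq -K$ weakly, gives $-u\leq \Psi$, so that $|u|\leq \Psi = \kappa\, w \leq \kappa \|v_0\|_{L^\infty(\R^n)}$ in all of $\R^n$; the last quantity is finite by Lemma \ref{lem.sol1}. Tracking constants, $\kappa$ depends only on $s,n,\lambda,\Lambda,K$ and $\operatorname{diam}(\Omega)$ (through $c_R$), and $\|v_0\|_\infty$ is universal, giving \eqref{eq.bound}.

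\textbf{Main obstacle.} The delicate point is the translation of the scaled equation $(-\Delta_{g_R})^s w = 1$ into a clean lower bound $\gfls w\ge c_R$ for the \emph{original} operator, since the scaling in Lemma \ref{lem.scaling} changes both the Young function and picks up a power of $R$; one must check that $g_R(t) = g(R^{-s} t)$ (or the appropriate normalization) together with \eqref{minmax1} produces a bound of the form $\gfls w \ge c(s,\lambda,\Lambda)\min\{R^{-s\lambda},R^{-s\Lambda}\}\cdot(\text{something positive})$, and that this positive quantity is bounded below away from $\partial B_R$ — but on $\Omega\subset\subset B_R$ this is guaranteed by $\inf_{B_r}v_0>0$ from Lemma \ref{lem.sol1}. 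A minor secondary point is making the comparison rigorous at the level of weak (sub/super)solutions rather than pointwise ones, but this is exactly what Proposition \ref{compara} is designed to handle, so no additional work is needed there. Everything else is bookkeeping of constants.
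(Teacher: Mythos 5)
Your proposal is correct and follows essentially the same argument as the paper: center and rescale the barrier $v_0$ from Lemma \ref{lem.sol1} to a ball $B_d(x_0)\supset\supset\Omega$, use Lemma \ref{lem.scaling} together with the ellipticity estimate \eqref{minmax1} to obtain a positive weak lower bound on $\gfls$ of the rescaled barrier, multiply by a constant to dominate $K$, and then apply the comparison principle (Proposition \ref{compara}) to $u$ and $-u$. The bookkeeping of constants you describe, including the dependence through $\operatorname{diam}(\Omega)$ and the universal bound $\|v_0\|_\infty$, matches the paper's reasoning.
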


\begin{proof}
Let $d>$diam$(\Omega)$ and take $x_0\in\Omega$ such that $\Omega\subset\subset B_d(x_0)$. Consider $v_0$ as in the previous Lemma and notice that thanks the translation invariance, the scaling from Lemma \ref{lem.scaling} and \eqref{minmax1}
\[
\gfls v_0\left(\frac{x-x_0}{d}\right)\geq \frac{1}{\max\{d^{s\lambda},d^{s\Lambda}\}}\quad\text{ weakly in }B_d(x_0).
\] 
As in the previous theorem, multiplying $v_0$ by a constant $C$ (which will depend only on universal parameters and $d$)
\[
\gfls C v_0\left(\frac{x-x_0}{d}\right)\geq K\quad\text{ weakly in }\Omega
\] 
and since $u=0\leq Cv_0$ in $\Omega^c$, the comparison principle gives $u\leq Cv_0$ in $\R^n$. We analogously bound $-u$ to get \eqref{eq.bound} and the proof concludes.   
\end{proof}

In the next lemma we construct the barrier that we need to compare $u$ with $d^s$:
\begin{lem}\label{lem.barrier}
There exist $w\in C^s(\R^n)$, $R>0$, $\eta\in (0,1)$ and $c>1$ such that 
\[
\gfls w\geq \eta\quad\text{ weakly in }B_R(e_n)\setminus\overline{B_1}
\]
and 
\[
c^{-1}(|x|-1)^s_+\leq w\leq c(|x|-1)^s_+\quad\text{ in }\R^n.
\]
\end{lem}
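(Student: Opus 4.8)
The plan is to obtain $w$ as a mild modification of the distance power $(|x|-1)_+^s$ itself. Pick a small parameter $r_1>0$, to be fixed below, and a smooth nonincreasing function $\varphi\colon[0,\infty)\to[\tfrac12,1]$ with $\varphi\equiv1$ on $[0,1+r_1]$ and $\varphi\equiv\tfrac12$ on $[1+2r_1,\infty)$, and set $w(x):=\varphi(|x|)(|x|-1)_+^s$. Then $w\in C^s(\R^n)$, $w\equiv0$ in $\overline{B_1}$, $w$ grows like $|x|^s$ at infinity so that $w\in\W^{s,G}_{\text{loc}}(\R^n)$ (by the integrability observation in Section \ref{sec.prel}), and
\[
\tfrac12(|x|-1)_+^s\le w(x)\le (|x|-1)_+^s\qquad\text{in }\R^n,
\]
which is the required two-sided bound with $c=2>1$. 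It remains to fix $r_1$ so that $\gfls w\ge\eta$ weakly in $B_R(e_n)\setminus\overline{B_1}$ with $R:=r_1/2$ and, say, $\eta=\tfrac12$.

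Let $x\in B_{r_1/2}(e_n)\setminus\overline{B_1}$; then $|x|<1+r_1$, hence $w(x)=(|x|-1)_+^s$. Adding and subtracting $g\big((|x|-1)_+^s-(|y|-1)_+^s)/|x-y|^s\big)$ inside the defining integral, and using that $w(y)\le(|y|-1)_+^s$ for all $y$ together with $g$ nondecreasing (\eqref{g2}), we get
\[
\gfls w(x)=\gfls\big((|\cdot|-1)_+^s\big)(x)+\mathcal R(x),\qquad \mathcal R(x)\ge0,
\]
where $\mathcal R(x)=\int_{\R^n}\big[g\big(\tfrac{(|x|-1)_+^s-w(y)}{|x-y|^s}\big)-g\big(\tfrac{(|x|-1)_+^s-(|y|-1)_+^s}{|x-y|^s}\big)\big]\,|x-y|^{-n-s}\,dy$. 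For the first term, flattening $\partial B_1$ near $e_n$ by a $C^{1,1}$ diffeomorphism equal to the identity off a fixed ball and invoking Lemma \ref{lem.difeo} — equivalently, running the (purely local) argument of Proposition \ref{prop.distance} — shows that $(|\cdot|-1)_+^s$ has bounded fractional $g$-Laplacian on a fixed one-sided neighborhood $U$ of $e_n$: $|\gfls((|\cdot|-1)_+^s)|\le C_0$ on $U\cap B_1^c$, with $C_0$ depending only on $n,s,g,g'$. We henceforth take $r_1$ small enough that $B_{r_1}(e_n)\cap B_1^c\subset U$.

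To bound $\mathcal R(x)$ from below, keep only the contribution of $y\in B_{r_1/2}\big((1+3r_1)e_n\big)$; there $|y|\ge 1+\tfrac52 r_1$, so $\varphi(|y|)=\tfrac12$ and $w(y)=\tfrac12(|y|-1)^s$, while $|y|-1\asymp r_1$ and, since $x\in B_{r_1/2}(e_n)$, $|x-y|\asymp r_1$ (in fact $2r_1\le|x-y|\le4r_1$); note also $|x-y|\ge r_1/2$ for every $y$ in the modified region $\{|y|>1+r_1\}$, so the principal value defining $\gfls w(x)$ is a genuinely convergent integral and the split above is legitimate. A direct estimate shows that on this ball the two arguments of $g$ lie in a fixed compact interval and their difference equals $\tfrac12(|y|-1)^s|x-y|^{-s}\ge\delta_0$ for a universal $\delta_0>0$; since $g$ is odd, nondecreasing and convex (\eqref{g4}), the bracket is then $\ge g(\delta_0/2)>0$. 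As the ball has measure $\asymp r_1^n$ and $|x-y|^{-n-s}\ge(4r_1)^{-n-s}$ there, we conclude $\mathcal R(x)\ge c_1 r_1^{-s}$ with $c_1=c_1(n,s,g)>0$ independent of $r_1$ and of $x$. Hence $\gfls w(x)\ge -C_0+c_1 r_1^{-s}$ on $B_{r_1/2}(e_n)\setminus\overline{B_1}$; choosing $r_1$ so small that $c_1 r_1^{-s}\ge C_0+1$ gives $\gfls w\ge1$ pointwise — hence weakly, cf. Corollary \ref{cor.strongweak} — in $B_R(e_n)\setminus\overline{B_1}$, and the lemma follows with $\eta=\tfrac12$, $c=2$, $R=r_1/2$.

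The crux — and essentially the only real obstacle — is this strict positivity of $\gfls w$: the naive candidate $(|x|-1)_+^s$ only has a \emph{bounded} fractional $g$-Laplacian near $\partial B_1$ (and in fact a nonpositive one, as a one-dimensional computation based on Lemma \ref{1dsol} shows), so some genuine correction is unavoidable. The mechanism above works because the correction — shrinking $w$ by a fixed factor — is placed on a shell at the \emph{small} distance $r_1$ from the contact point $e_n$, where nonlocal interactions are of size $r_1^{-s}$; this produces a gain in $\gfls w$ that overwhelms the fixed curvature error $C_0$ once $r_1$ is chosen small, while the global shape of $w$ stays comparable to $(|x|-1)_+^s$.
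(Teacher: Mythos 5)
Your construction is correct but genuinely different from the paper's. Both arguments rest on the same mechanism (Lemma \ref{nonlocal.behavior}: lowering the barrier away from the contact point raises $\gfls$ at the contact point, and this gain must beat the curvature error in $\gfls d^s$), but they beat the error in opposite ways. The paper \emph{flattens the error}: it works on $B_R$ with $R$ large so the curvature error from the diffeomorphism is $O(1/R)$, gains a fixed $\eta>0$ by truncating the one--dimensional profile at the fixed level $5^s$, then scales back; the lower comparison $c^{-1}d^s_R\le w$ then requires the extra $\max\{\hat v,\varepsilon d^s_R\}$ step together with a separate $\varepsilon$--smallness argument (via Lemma \ref{lema.0bis}) to preserve the supersolution inequality. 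You instead \emph{amplify the gain}: you keep $B_1$ fixed (so the curvature bound $C_0$ is a fixed universal constant) and shrink $w$ by a factor $\tfrac12$ on a shell at distance $\asymp r_1$ from $\partial B_1$; because the kernel is of size $r_1^{-n-s}$ there and the shell has volume $\asymp r_1^n$, the gain is $\asymp r_1^{-s}\to\infty$, and the convexity/superadditivity of $g$ gives the needed uniform lower bound $g(a)-g(b)\ge g(\delta_0/2)$ on the bracket. This avoids the large-$R$ scaling entirely and makes the two--sided comparability $\tfrac12(|x|-1)_+^s\le w\le(|x|-1)_+^s$ automatic, which is tidier. Two details should be made explicit: (i) the bound $|\gfls((|\cdot|-1)_+^s)|\le C_0$ near $e_n$ cannot be cited verbatim from Proposition \ref{prop.distance} (stated for bounded $\Omega$, with $d^s$ bounded); it follows from Lemma \ref{lem.difeo} for the local piece plus a far-field estimate using that $\big((|x|-1)_+^s-(|y|-1)_+^s\big)/|x-y|^s$ is globally bounded, which keeps the argument of $g$ bounded even though $(|x|-1)_+^s$ is unbounded; (ii) the split $\gfls w=\gfls\big((|\cdot|-1)_+^s\big)+\mathcal R$ in the weak sense is exactly an application of Lemma \ref{nonlocal.behavior} with $v(y)=(\varphi(|y|)-1)(|y|-1)_+^s$ (supported away from the target annulus), which is the right reference here rather than Corollary \ref{cor.strongweak}.
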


\begin{proof}

Since the fractional $g-$Laplacian is translation invariant (and also rotation invariant, recall Lemma \ref{lem.rotation}) by using a similar scaling argument as the one of Lemma \ref{lem.sol1} it suffices to prove the result for any ball of radius $R>2$ and any point $\bar{x}_R$ on its boundary. Let us set $\tilde{x}_R:=(0,-(R^2-4)^{1/2})$ and $\bar{x}_R=\tilde{x}_R+Re_n$. In this way, $B_R(\bar{x}_R)$ intersects the hyperplane $\{x_n=0\}$ at the $n-1$ dimensional ball $\{|x'|<2\}$ where we denote as usual $(x',x_n)\in \R^{n-1}\times\R$. 

For $R>2$ there exists $\varphi\in C^{1,1}(\R^{n-1})$ with $\|\varphi\|_{C^{1,1}(\R^{n-1})}\leq C/R$ and  
\[
\varphi(x')=((R^2-|x'|)^{1/2}-(R^2-4)^{1/2})_+,\quad\text{ for all }|x'|\in[0,1]\cup[3,\infty)
\]
and set $U_+:=\{x\in \R^{n-1}:\varphi(x')<x_n\}$.

Further, by the same construction as in \cite[Lemma 4.3]{IMS} we have a diffeomorphism $\Phi\in C^{1,1}(\R^n,\R^n)$ such that $\Phi(0)=\bar{x}_R$, $\Phi=Id$ in $B_4^c$, 
\[
\|\Phi-Id\|_{C^{1,1}(\R^n,\R^n)}+\|\Phi^{-1}-Id\|_{C^{1,1}(\R^n,\R^n)}\leq \frac{C}{R},\quad\Phi(\R^n_+)=U_+.
\]

Next, let $v$ be defined as in \eqref{eq.vdifeo}, so that Lemma \ref{lem.difeo} gives that 
\[
\gfls v=f\text{ weakly in }U_+\text{ and }\|f\|_\infty\leq \frac{C}{R}.
\]
and note that, by the properties of $\Phi$, $v(x)=u_0(x_n)\text{ in }B_4^c$, $v\leq 4^s$ in $B_4$.

Let us further truncate $\hat{v}:=\min\{v,5^s\}$ and observe that 
\[
v(x)-\hat{v}(x)=(x_n)_+^s-5^s\text{ in }\{x_n\geq 5\},\quad v-\hat{v}=0\text{ in }\{x_n<5\}
\]
and in particular $v-\hat{v}$ vanishes identically in $B_4$ so that, using Lemma \ref{nonlocal.behavior} 
\[
\gfls \hat{v}=\gfls(v+(\hat{v}-v))=f+h\text{ weakly in }B_4
\] 
with
\begin{align*}
h(x) & =2\int_{B_4^c} \left[g\left(\frac{v(x) -\hat{v}(y)}{|x-y|^s} \right) - g\left(\frac{v(x) -v(y)}{|x-y|^s} \right) \right]\frac{ 1}{|x-y|^{n+s}} \,dy \\
	 & \geq 2\int_{\{y_n\geq5\}} \left[g\left(\frac{(x_n)_+^s -5^s}{|x-y|^s} \right) - g\left(\frac{(x_n)_+^s -(y_n)_+^s}{|x-y|^s} \right) \right]\frac{ 1}{|x-y|^{n+s}} \,dy
\end{align*}
for $x\in B_4$. Choosing an appropriate constant $\eta$ as in the proof of Lemma \ref{lem.sol1},
\[
\gfls \hat{v}= f+g\geq -\frac{C}{R}+\eta\text{ weakly in }U_+\cap B_4
\]
and taking $R$ large enough we get that 
\begin{equation}\label{eq.hatvsupersol}
\gfls \hat{v}\geq \frac{\eta}{2}\text{ weakly in }U_+\cap B_2(\bar{x}_R).
\end{equation}

We are ready to estimate $\hat{v}$. To do that, let us define $d_R(x):=(|x-\tilde{x}_R|-R)_+$ and notice that we can immediately find $\tilde{c}>1$ such that 
\begin{equation}\label{eq.boundabovev}
\hat{v}(x)\leq \tilde{c}d^s_R(x)\quad \text{ for any }x\in\R^n.
\end{equation}
In fact, no equation is used here since $\hat{v}$ vanishes in $U_+^c\supset B_R(\tilde{x}_R)$ and it is $s-$H\"older continuous in $\R^n$.

To get the lower bound, notice that if $x\in B_1(\bar{x}_R)$ then either $$x\in B_1(\bar{x}_R)\cap U_+^c\subset B_R(\tilde{x}_R)$$ and $d_R^s(x)=\tilde{c}\hat{v}(x)=0$ or 
$$x\in B_1(\bar{x}_R)\cap U_+\subset B_R^c(\tilde{x}_R).$$ In the latter case, we can let $(X',X_n)$ be such that $x=\Phi(X)$, $Z=(X',0)$ and $z=\Phi(Z)$. Then $|X'|<1$ and $z\in\partial B_R(\tilde{x}_R)$ so that 
\[
d_R^s(x)\leq |x-z|^s\leq \tilde{c}|X-Z|^s=\tilde{c}X_n^s=\tilde{c}\hat{v}(x)
\]
so, taking $\tilde{c}>1$ bigger if needed
\begin{equation}\label{eq.lboundbelowv}
\hat{v}\geq \frac{1}{\tilde{c}}d^s_R\quad \text{ in }B_R(\tilde{x}_R).
\end{equation}

We want to extend \eqref{eq.lboundbelowv} but keeping \eqref{eq.hatvsupersol} and \eqref{eq.boundabovev}. Take $\varepsilon\in(0,1/\tilde{c})$ and 
\[
v_\varepsilon:=\max\{\hat{v},\varepsilon d^s_R\}.
\] 
We have that $v_\varepsilon$ satisfies the corresponding estimates \eqref{eq.boundabovev} and \eqref{eq.lboundbelowv} with a constant $c_\varepsilon=\max\{\tilde{c}+\varepsilon,\varepsilon^{-1}\}$ and further
\[
\hat{v}\leq v_\varepsilon\leq \hat{v}+\varepsilon d^s_R\text{ in }\R^n,\quad v_\varepsilon-\hat{v}=0\text{ in }B_1(\bar{x}_R).
\]  
Therefore, using again Lemma \ref{nonlocal.behavior} together with \eqref{eq.hatvsupersol} and Lemma \ref{lema.0bis} we have
\begin{align*}
\gfls v_\varepsilon & =\gfls \hat{v}-2\int_{B_{1/2}^c(\bar{x}_R)} \left[g\left(\frac{\hat{v}(x) -\hat{v}(y)}{|x-y|^s} \right) - g\left(\frac{\hat{v}(x) -v_\varepsilon(y)}{|x-y|^s} \right) \right]\frac{ 1}{|x-y|^{n+s}} \,dy \\
					& \geq \frac{\eta}{2}-C\int_{B_1^c(\bar{x}_R)} \frac{\max\{\varepsilon d^s_R(x),g(\varepsilon d^s_R(x))\}}{|x-y|^{n+2s}} \,dy. 
\end{align*}
Noticing that the second term is finite and vanishes as $\varepsilon\rightarrow0^+$ independently of $x$, we can choose $\varepsilon$ small enough so that 
\[
\gfls v_\varepsilon \geq \frac{\eta}{4}\quad\text{ weakly in } B_{1/2}(\bar{x}_R)\setminus B_R(\tilde{x}_R).
\] 

Finally, up to proceeding as in the proof of Lemma \ref{lem.sol1} if needed, the function $w(x):=v_\varepsilon(\tilde{x}_R+Rx)$ fulfills the desired properties.
\end{proof}

Now we can prove the main result of this section:
\begin{thm}\label{thm.bdry}
Let $u\in W^{s,G}_0(\Omega)$ be a weak solution of $|\gfls u|\leq K$ in $\Omega$ for some $K>0$. Then
\begin{equation}\label{eq.bdry}
|u|\leq Cd^s\quad\text{ a.e. in }\Omega
\end{equation}
where $C$ is a positive constant depending only on $s,n,\lambda,\Lambda, K, g$ and $\Omega$. 
\end{thm}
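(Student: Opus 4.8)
The plan is to prove \eqref{eq.bdry} by a barrier argument, combining the global bound of Proposition \ref{prop.bound}, the geometry of $C^{1,1}$ domains from Lemma \ref{lem.dist}, the barrier of Lemma \ref{lem.barrier}, and the comparison principle of Proposition \ref{compara}. Since $g$ is odd, $\gfls(-u)=-\gfls u$, so $-u\in W^{s,G}_0(\Omega)$ solves the same problem and it suffices to prove $u\le Cd^s$ a.e.\ in $\Omega$. By Proposition \ref{prop.bound} we already have $\|u\|_{L^\infty(\R^n)}\le C_0$ for some $C_0=C_0(s,n,\lambda,\Lambda,K,\operatorname{diam}\Omega)$; let $\rho>0$ be the radius given by Lemma \ref{lem.dist}. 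On $\{x\in\Omega:\ d(x)\ge\rho\}$ the claim is trivial because $u(x)\le C_0\le C_0\rho^{-s}d(x)^s$, so from now on we fix $x\in\Omega$ with $d(x)<\rho$.

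Let $x_0\in\partial\Omega$ be a nearest boundary point, so $d(x)=|x-x_0|$, and let $B_\rho(x_1)\subset\Omega$, $B_\rho(x_2)\subset\Omega^c$ be the interior and exterior tangent balls at $x_0$ given by Lemma \ref{lem.dist}; these lie along the normal at $x_0$, with $x_0$ between $x$ and $x_2$, $|x_0-x_2|=\rho$, and $x$ on the segment $[x_0,x_1]$. In particular $|x-x_2|=|x-x_0|+|x_0-x_2|=d(x)+\rho$, so that
\begin{equation}\label{eq.planpt}
(|x-x_2|-\rho)_+^{\,s}=d(x)^{\,s}.
\end{equation}
Applying to the ball $B_\rho(x_2)$ and the point $x_0\in\partial B_\rho(x_2)$ the translation, rotation (Lemma \ref{lem.rotation}) and scaling (Lemma \ref{lem.scaling}) reduction of the proof of Lemma \ref{lem.barrier}, we obtain constants $R>2$, $\eta\in(0,1)$, $c>1$ depending only on $n,s,\lambda,\Lambda,g$ and a function $w_{x_0}\in C^s(\R^n)$ vanishing on $\overline{B_\rho(x_2)}$ such that, with $\eta_\rho:=\eta/\max\{\rho^{s\lambda},\rho^{s\Lambda}\}$ (the scaling factor being computed exactly as in Proposition \ref{prop.bound}),
\[
\gfls w_{x_0}\ge \eta_\rho\ \text{weakly in }B_{\rho R}(x_0)\setminus\overline{B_\rho(x_2)},\qquad \tfrac{1}{c\rho^s}(|y-x_2|-\rho)_+^{\,s}\le w_{x_0}(y)\le \tfrac{c}{\rho^s}(|y-x_2|-\rho)_+^{\,s}\ \text{ in }\R^n.
\]

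Set $D:=\Omega\cap B_{\rho R}(x_0)$. Since $\overline{B_\rho(x_2)}\subset\Omega^c$ we have $D\subset B_{\rho R}(x_0)\setminus\overline{B_\rho(x_2)}$, so $w_{x_0}$ is a weak supersolution of $\gfls w_{x_0}=\eta_\rho$ in $D$; and if $y\in\Omega\setminus B_{\rho R}(x_0)$ then $|y-x_2|\ge|y-x_0|-\rho\ge\rho(R-1)$, hence $w_{x_0}(y)\ge c^{-1}(R-2)^s=:\delta>0$, a bound uniform in $x_0$. Choose $M:=\max\{1,(K/\eta_\rho)^{1/\lambda},C_0/\delta\}$ and put $\Psi:=M\,w_{x_0}$. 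By \eqref{minmax1} (used as in Lemma \ref{lem.sol1} and Proposition \ref{prop.bound}), $\gfls\Psi\ge M^\lambda\eta_\rho\ge K\ge\gfls u$ weakly in $D$; moreover $\Psi\ge0=u$ on $\Omega^c$ and $\Psi\ge M\delta\ge C_0\ge u$ on $\Omega\setminus B_{\rho R}(x_0)$, so $u\le\Psi$ on $D^c=\Omega^c\cup(\Omega\setminus B_{\rho R}(x_0))$. By the comparison principle (Proposition \ref{compara}), $u\le\Psi=M\,w_{x_0}$ in $\R^n$, and evaluating at $x$ and using \eqref{eq.planpt},
\[
u(x)\le M\,c\,\rho^{-s}(|x-x_2|-\rho)_+^{\,s}=M\,c\,\rho^{-s}\,d(x)^{\,s}.
\]
Since $\rho$ (Lemma \ref{lem.dist}), $R,\eta,c$ (Lemma \ref{lem.barrier}) and $C_0$, hence $M$, depend only on $s,n,\lambda,\Lambda,K,g$ and $\Omega$ but not on $x$, we get $u\le Cd^s$ on $\{d<\rho\}$; combining with the trivial bound on $\{d\ge\rho\}$ and applying the same argument to $-u$ yields \eqref{eq.bdry}.

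The main obstacle is the construction and bookkeeping of the rescaled barrier $w_{x_0}$: one must check that the dilation/rotation carrying the normalized configuration of Lemma \ref{lem.barrier} onto $B_\rho(x_2)$ and $x_0$ transforms $\gfls$ in the stated way, so that the region where $w_{x_0}$ is still a supersolution actually contains $\Omega\cap B_{\rho R}(x_0)$, and one must extract the \emph{uniform} positive lower bound $\delta$ for $w_{x_0}$ on $\Omega\setminus B_{\rho R}(x_0)$, which is what makes the ordering $\Psi\ge u$ hold on all of $D^c$. The remaining steps are a routine application of the comparison principle, the identity \eqref{eq.planpt} at the nearest-point projection being precisely what upgrades the barrier estimate to the sharp exponent $s$.
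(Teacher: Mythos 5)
Your proof is correct and follows essentially the same barrier-and-comparison strategy as the paper: reduce to a boundary strip via Proposition~\ref{prop.bound}, place tangent balls via Lemma~\ref{lem.dist}, rescale the barrier of Lemma~\ref{lem.barrier} to fit, and conclude with Proposition~\ref{compara}. One point where your write-up is actually cleaner than the paper's: you build the barrier explicitly around the \emph{exterior} tangent ball $B_\rho(x_2)$ and record the identity $(|x-x_2|-\rho)_+^s=d(x)^s$ at the foot of the normal, which is exactly what makes the exponent $s$ come out; the paper's displayed formulas reference $B_{\rho/2}(x_1)$ and define $\delta(x)=\operatorname{dist}(x,B_{\rho/2}^c(x_1))$ (which would make $\delta$ vanish precisely where the supersolution inequality is claimed and where $\delta\geq\theta$ is later asserted), so those lines contain slips that your version repairs.
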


\begin{proof}

Thanks to Proposition \ref{prop.bound}, and by taking a larger constant $C$ if needed, it is enough to show \eqref{eq.bdry} in a neighborhood of $\partial\Omega$. Let
\[
U:=\left\{x\in\Omega:d(x)<\frac{R\rho}{2}\right\}
\]
where $R$ is given in Lemma \ref{lem.barrier} and $\rho$ is given in Lemma \ref{lem.dist}. Let $\bar{x}\in U$ and $x_0\in\partial\Omega$ at minimal distance from $\bar{x}$. 

According to the referred lemmata, there exist two balls $B_{\rho/2}(x_1)$ and $B_{\rho}(x_2)$ which are tangent to $\partial\Omega$ and a function in $C^s(\R^n)$ such that 
\begin{equation}\label{eq.equiationw}
\gfls w\geq \eta\quad\text{ weakly in }B_{R\rho/2}(x_0)\setminus B_{\rho/2}(x_1)
\end{equation}
\begin{equation}\label{eq.boundw}
c^{-1}\delta^s\leq w\leq c\delta^s\quad\text{ in }\R^n
\end{equation}
where $\delta(x)=$dist$(x,B_{\rho/2}^c(x_1))$. Recall that from Lemma \ref{lem.dist} we also have
\begin{equation}\label{eq.deltad}
\delta(\bar{x})=d(\bar{x})=|\bar{x}-x_0|
\end{equation}
and that further 
\[
\delta(x)\geq \theta>0\quad\text{ in }B_\rho^c(x_2)\setminus B_{R\rho/2}(x_0)
\]
for a constant $\theta$ depending only on $\Omega$. This inequality, together with\eqref{eq.boundw} and the fact that $\Omega\subset B_\rho^c(x_2)$ gives 
\[
w(x)\geq c^{-1}\theta^s\quad\text{ in }\Omega\setminus B_{R\rho/2}(x_0)
\]
and we may assume without loss of generality that $c^{-1}\theta^s<1$. 

We want to apply the comparison principle in the set $V:=\Omega\cap B_{R\rho/2}(x_0)$; set 
\[
M:=\frac{c}{\theta^s}g^{-1}\left(\frac{C}{\eta}\right)\quad\text{ and }\bar{w}=Mw
\]
where $C$ is the constant from \eqref{eq.bound}.

Recall again that we can increase the constants if needed and get
\[
\gfls \bar{w}\geq \gfls u \quad\text{ weakly in }V
\]
and since, by construction, $\bar{w}\geq u$ in $V^c$ the comparison principle and \eqref{eq.boundw} give
\[
u(x)\leq \bar{w}(x)\leq cM\delta^s(x)\quad\text{ a.e. in }\R^n
\] 
so recalling \eqref{eq.deltad} we have
\[
u(\bar{x})\leq cM\delta^s(\bar{x})=cMd^s(\bar{x})\quad\text{ for any }\bar{x}=x_0-t\nu_{x_0},\: t\in\left[0,\frac{R\rho}{2}\right]
\] 
where $\nu_{x_0}$ is the exterior unit normal to $\partial\Omega$ at $x_0$. A similar argument applied to $-u$ gives the other bound and the result is proven.
\end{proof}

\section{Proof of Theorem \ref{thm.main}}\label{sec.main}

In this section we give the proof of our main result:

\begin{proof}[Proof of Theorem \ref{thm.main}]
We set $K=\|f\|_{L^\infty(\Omega)}$. By Proposition \ref{prop.bound} we have that
$$
\|u\|_{L^\infty(\Omega)}\leq C
$$
where $C$ is a positive constant depending only on $s,n,\lambda,\Lambda, K$ and diam$(\Omega)$. 

Let us deal with the H\"older seminorm. Let $\alpha\in (0,s]$ be the exponent given in Corollary \ref{cor.hold}. Through a covering argument, inequality \eqref{eq.inthold} implies that $u\in C^\alpha_{loc}(\overline{\Omega'})$ for all $\Omega'$ compactly contained in $\Omega$, with a bound of the form
$$
\|u\|_{C^\alpha(\overline{\Omega'})}\leq C_{\Omega'} g^{-1}(K), \qquad C_{\Omega'}=C(n,s,\lambda,\Lambda,\Omega,\Omega').
$$
Therefore, it suffices to prove \eqref{global.bound} in the closure of a fixed $\rho-$neighborhood of $\partial\Omega$. Assume that $\rho=\rho(\Omega)>0$ is small enough  such that Lemma \ref{lem.dist} holds,  and thus the metric projection 
$$
\Pi \colon V \to \partial\Omega, \qquad \Pi(x)=\argmin_{y\in\Omega^c}\,|x-y|
$$
is well defined on $V:=\{x\in\overline\Omega\colon d(x)\leq \rho\}$. We claim that
\begin{equation}\label{eq.in1}
[u]_{C^\alpha(B_{r/2})}\leq C_\Omega  \quad \text{for all }x\in V \text{ and } r=d(x)
\end{equation}
for some constant $C_\Omega=C(n,s,\lam,\Lambda,\Omega, K)$, independent on $x\in V$. Recall that Corollary \ref{cor.hold} states that
$$
[u]_{C^\alpha(B_{r/2}(x))}\leq C (r^{s-\alpha}g^{-1}(K)+  r^{-s}  \|u\|_{L^\infty(B_r(x))}^\beta +  r^{-s} \tailpp(u;x,r)^\beta  +    r^{-s} \tailpm(u;x,r)^\beta ).
$$
where $C$ is a constant depending on $n$, $s$, $\lambda$, $\Lambda$, $\Omega$. The first term in the right hand side of the previous inequality can be bounded as
$$
g^{-1}(K) r^{s-\alpha}   \leq  g^{-1}(K) \rho^{s-\alpha} \leq C(K,\lambda,\Lambda) \rho^{s-\alpha}.
$$
For the second one we use Theorem \ref{thm.bdry} and the fact that $\alpha\leq s$ to obtain 
$$
\|u\|_{L^\infty(B_r(x))}\leq C (d(x)+r)^s \leq C \rho^{s-\alpha} r^\alpha.
$$
The third  term can be bounded by using again Theorem \ref{thm.bdry} together with
$$
d(x)\leq |y-\Pi(x)|\leq |y-x|+|x-\Pi(x)|\leq |y-x|+r \leq 2|x-y|, \quad \forall y\in B_r^c(x),
$$
to obtain 
\begin{align*}
\tailpp(u;x,r &)^{(p^+-1)\beta} \leq  r^{sp^+} C^{p^+-1} \int_{B_r^c}\frac{d^{s(p^+-1)(y)}}{|x-y|^{n+sp^+}}\,dy\\
&\leq  r^{sp^+} C^{p^+-1} \int_{B_r^c}\frac{|x-y|^{s(p^+-1)(y)}}{|x-y|^{n+sp^+}}\,dy\leq  r^{sp^+} C^{p^+-1} r^{s(p^+-1)}
\end{align*}
and the desired bound follows. The last term can be bounded analogously, and  the proof of claim \eqref{eq.in1} is completed. 

To prove the theorem, pick $x,y\in V$ and suppose without loss of generality that $|x-\Pi(x)|\geq |y-\Pi(y)|$. Two situations are possible: either $2|x-y|<|x-\Pi(x)|$, in which case we set $r=d(x)$ and apply \eqref{eq.in1} in $B_{r/2}(x)$ to get
$$
|u(x)-u(y)|\leq C   |x-y|^\alpha;
$$
or $2|x-y|\geq |x-\Pi(x)|\geq |y-\Pi(y)|$, in which case Theorem \ref{thm.bdry} ensures that
\begin{align*}
|u(x)-u(y)|&\leq |u(x)|+|u(y)|\leq C (d^s(x)+d^s(y))\\
&\leq C(|x-\Pi(x)|^s + |y-\Pi(y)|^s)\leq C(|x-y|^s)\leq C\rho^{s-\alpha}(|x-y|^\alpha).
\end{align*}
Therefore, the $\alpha-$H\"older seminorm is bounded in $V$, which concludes the proof.
\end{proof}

\appendix

\section{Some inequalities for Young functions}

We prove, for the reader's convenience, some technical results  used in the paper.

\begin{lem} \label{lema.0}
For every $a,b>0$ it holds that $g(a-b)-g(a)\leq -2^{1-\Lambda} g(b)$
\end{lem}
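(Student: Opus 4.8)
The plan is to rewrite the inequality in the equivalent form $g(a)-g(a-b)\ge 2^{1-\Lambda}g(b)$ and, for fixed $b>0$, to study the one-variable function $\phi(a):=g(a)-g(a-b)$ on $a>0$, showing that $\inf_{a>0}\phi(a)\ge 2^{1-\Lambda}g(b)$. The only structural inputs needed are that $g$ is odd on $\R$ and convex on $(0,\infty)$ (so that $g'$ is even and nondecreasing on $(0,\infty)$), together with the two-sided power bound \eqref{minmax1}; note also that $\Lambda>1$ from \eqref{L} gives $2^{1-\Lambda}\le 1$.

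First I would handle the regime $a\ge b$, where $a-b\ge 0$. Here $\phi'(a)=g'(a)-g'(a-b)\ge 0$ since $g'$ is nondecreasing on $(0,\infty)$, so $\phi$ is nondecreasing on $[b,\infty)$ and hence $\phi(a)\ge\phi(b)=g(b)-g(0)=g(b)\ge 2^{1-\Lambda}g(b)$ because $g(0)=0$ by \eqref{g1}.

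Next I would handle $0<a\le b$, where $a-b\le 0$ and oddness of $g$ gives $\phi(a)=g(a)+g(b-a)$. The quickest route is midpoint convexity: $g(a)+g(b-a)\ge 2g\!\left(\tfrac b2\right)$, and then \eqref{minmax1} with $\alpha=\tfrac12$ (for which $\min\{2^{-\lambda},2^{-\Lambda}\}=2^{-\Lambda}$, since $\lambda<\Lambda$) yields $g\!\left(\tfrac b2\right)\ge 2^{-\Lambda}g(b)$, whence $\phi(a)\ge 2^{1-\Lambda}g(b)$. (Alternatively, one checks that $\phi'(a)=g'(a)-g'(b-a)$ changes sign exactly once, at $a=b/2$, so the minimum of $\phi$ on $(0,b]$ is $\phi(b/2)=2g(b/2)$.) Combining the two regimes gives the claim.

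The argument is entirely elementary; the only point requiring a bit of care is correctly tracking the sign of $a-b$ and invoking the oddness of $g$ when $a<b$, and — relatedly — selecting the correct branch of the $\min$ in \eqref{minmax1}, which is where the hypothesis $\lambda<\Lambda$ enters in an essential way.
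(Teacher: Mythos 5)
Your proof is correct, and it is close in spirit to the paper's but structured differently. The paper gives a single one-line chain $g(b)=g\bigl(2\cdot\tfrac{(b-a)+a}{2}\bigr)\le 2^{\Lambda}g\bigl(\tfrac{(b-a)+a}{2}\bigr)\le 2^{\Lambda-1}\bigl(g(b-a)+g(a)\bigr)$ and then replaces $g(b-a)$ by $-g(a-b)$ via oddness, with no case distinction. You instead split into $a\ge b$ and $0<a\le b$. In the regime $a\le b$, after applying oddness you land on exactly the paper's midpoint-convexity inequality $g(a)+g(b-a)\ge 2g(b/2)$ followed by \eqref{minmax1}, so that half mirrors the paper. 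For $a\ge b$ you argue from monotonicity of $g'$ and obtain the stronger estimate $g(a)-g(a-b)\ge g(b)$, with no loss of constant. One genuine benefit of your case split is that it quietly sidesteps a subtlety in the paper's argument: the step $g\bigl(\tfrac{(b-a)+a}{2}\bigr)\le\tfrac12\bigl(g(b-a)+g(a)\bigr)$ may have $b-a<0$, and midpoint convexity of the odd extension of a function convex on $(0,\infty)$ is only guaranteed when the midpoint is nonnegative (here $b/2>0$) — a true but not entirely obvious fact that the paper does not remark on. Your version applies convexity only to arguments in $[0,\infty)$. A tiny nit: at the end you only need $\lambda\le\Lambda$, not strict inequality, to conclude $\min\{2^{-\lambda},2^{-\Lambda}\}=2^{-\Lambda}$.
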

\begin{proof}
By using \eqref{delta2} and \eqref{g4} we get
\begin{align*}
g(b)&=g\left(2\frac{b-a+a}{2}\right) \leq 2^\Lambda g\left(\frac{b-a+a}{2}\right)\\
&\leq 2^{\Lambda-1}(g(b-a)+g(a)) = 2^{\Lambda-1}(g(a)- g(a-b))
\end{align*}
where in the last inequality we used that $g$ can be extended as an odd function.
\end{proof}

\begin{lem} \label{lema.0bis}
Fix $M>0$. There exists $C=C_M>0$ such that for every $|a|\leq M$ and $b>0$ it holds that
$g(a)-g(a-b)\leq C_M\max\{b,g(b)\}$.
\end{lem}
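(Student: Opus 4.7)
My plan is to split into two cases according to the size of $b$ relative to $M$. For small $b$ (say $b\leq M$), both endpoints $a$ and $a-b$ remain within $[-2M,2M]$, where $g$ is Lipschitz, so the difference $g(a)-g(a-b)$ is controlled linearly in $b$. Since $g$ is convex on $(0,\infty)$ extended oddly to $\R$, its derivative $g'$ is nondecreasing on $(0,\infty)$ and even, so $L_M:=g'(2M)$ is a Lipschitz constant on $[-2M,2M]$, and writing $g(a)-g(a-b)=\int_{a-b}^{a}g'(t)\,dt$ immediately gives $g(a)-g(a-b)\leq L_M b\leq L_M\max\{b,g(b)\}$.

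For large $b$ (the case $b>M$), I would instead exploit the $\Delta_2$ condition. Since $g$ is odd and nondecreasing, and since $|a|\leq M<b$ and $|a-b|\leq M+b\leq 2b$,
$$
g(a)-g(a-b)\leq g(|a|)+g(|a-b|)\leq g(M)+g(2b)\leq g(b)+2^{\Lambda}g(b),
$$
where the last step uses $g(M)\leq g(b)$ together with \eqref{delta2}. Taking $C_M=\max\{L_M,\,1+2^{\Lambda}\}$ then yields $g(a)-g(a-b)\leq C_M\max\{b,g(b)\}$ in both regimes. No real obstacle is expected; the only point of care is to keep track of the odd extension so that monotonicity, convexity, and the $\Delta_2$ inequality apply uniformly on the whole line.
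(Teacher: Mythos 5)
Your proof is correct and follows essentially the same two-case split as the paper: a Lipschitz bound $g(a)-g(a-b)\leq g'(2M)\,b$ when $b\leq M$, and a growth/doubling bound when $b$ is large. In fact your large-$b$ estimate $g(a)-g(a-b)\leq g(M)+g(|a-b|)\leq g(M)+g(2b)\leq (1+2^{\Lambda})g(b)$ is the correct form of the paper's displayed inequality, which as written bounds the difference by $g(M)+g(2M)$ — a quantity that cannot dominate $g(a)-g(a-b)$ for unbounded $b$ and is evidently a typo for the $\Delta_2$ argument you carried out.
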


\begin{proof}
We separate two cases: if $b\leq M$ we have $g(a)-g(a-b)\leq |g'(M)||b|$ 
while if $b\geq M$ using \eqref{minmax1} we get $
g(a)-g(a-b)\leq g(M)+g(2M)\leq C_Mg(b)$.
\end{proof}

\begin{lem} \label{lema.separa}
For any $a,b\geq 0$ and $\theta\in(0,1)$ there exists $C_\theta$ such that $C_\theta\to \infty$ as $\theta\to 1^+$ and $g(a+b)\leq (1+\theta)^\Lambda g(a) +C_\theta g(b)
$.

\end{lem}

\begin{proof}
Given $\theta>0$ and $a,b>0$ (if either is equal to 0 the result is trivial). 

If $b>\theta a$, due to the monotonicity of $g$ and \eqref{delta2} we have, for $j_\theta\in\mathbb{N}$ large enough
\[
g(a+b)\leq g\left(\left(\frac{1}{\theta}+1\right)b\right)\leq g(2^{j_\theta}b)\leq 2^{j_\theta\Lambda}g(b).
\]
On the other hand, if $b\leq \theta a$ we get $g(a+b)\leq g\left((1+\theta)a\right)\leq (1+\theta)^\Lambda g(a)$ and the lemma is proved.
\end{proof}

\begin{lem} \label{delta2.inversa}
For all $a,b\geq 0$ it holds that $g^{-1}(a+b) \leq 2^\frac{1}{\lambda}(g^{-1}(a) + g^{-1}(b))$.
\end{lem}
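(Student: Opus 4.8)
The plan is to exploit the power-type lower bound on $g$ contained in \eqref{minmax1}. Recall that for $\alpha\ge 1$ one has $\alpha^\lambda\le\alpha^\Lambda$, so \eqref{minmax1} reads $g(\alpha t)\ge \alpha^\lambda g(t)$ for every $t\ge0$ and every $\alpha\ge1$. The key instance is $\alpha=2^{1/\lambda}$, which is $\ge 1$ since $\lambda>1>0$, giving
$$
g\bigl(2^{1/\lambda}t\bigr)\ge 2\,g(t)\qquad\text{for all }t\ge0.
$$

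First I would dispose of the trivial case: if $a=0$ or $b=0$ the inequality is immediate, since $g^{-1}(0)=0$ and $g^{-1}$ is nondecreasing (which follows from the strict monotonicity of $g$, itself a consequence of \eqref{L}, as $g'>0$ makes $g$ a strictly increasing bijection of $[0,\infty)$). So assume $a,b>0$ and, without loss of generality, $a\ge b$. Then $a+b\le 2a$, and by monotonicity of $g^{-1}$,
$$
g^{-1}(a+b)\le g^{-1}(2a).
$$

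Next, write $2a=2\,g\bigl(g^{-1}(a)\bigr)$ and apply the displayed inequality with $t=g^{-1}(a)$ to obtain $2a\le g\bigl(2^{1/\lambda}g^{-1}(a)\bigr)$. Applying the nondecreasing function $g^{-1}$ to both sides yields $g^{-1}(2a)\le 2^{1/\lambda}g^{-1}(a)$, and hence
$$
g^{-1}(a+b)\le 2^{1/\lambda}g^{-1}(a)\le 2^{1/\lambda}\bigl(g^{-1}(a)+g^{-1}(b)\bigr),
$$
which is the claim. There is no substantial obstacle here; the only point requiring a word of care is the justification that $g^{-1}$ is well defined and nondecreasing, which is precisely where \eqref{L} is used, together with the elementary scaling inequality \eqref{minmax1} that encodes the "doubling from below'' of $g$.
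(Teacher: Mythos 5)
Your proof is correct, and it reaches the same core estimate as the paper, namely the doubling property $g^{-1}(2t)\le 2^{1/\lambda}g^{-1}(t)$, combined with $a+b\le 2\max\{a,b\}$. The paper obtains this by first noting the inherited Lieberman-type condition $\frac{1}{\Lambda}\le \frac{t(g^{-1})'(t)}{g^{-1}(t)}\le\frac{1}{\lambda}$, which is the $g^{-1}$-analogue of \eqref{L}; you instead read the same doubling directly out of \eqref{minmax1} applied to $g$ and then pass through the inverse. The two routes are essentially equivalent, but yours is slightly more self-contained since it avoids the implicit differentiation of $g^{-1}$ and works only with the already-established pointwise inequality \eqref{minmax1}.
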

\begin{proof}
It follows from the fact that $\frac{1}{\Lambda} \leq \frac{t(g^{-1})'(t)}{g^{-1}(t)}\leq \frac{1}{\lambda}$.
\end{proof}

Aside from the previous inequalities regarding Young functions, we will use a simple property of sets which are at positive distance from each other; recall that given $A,B\subset\R^n$ we define the distance between them as $\text{dist}(A,B):=\inf_{x\in A,y\in B}|x-y|$.

\begin{lem} \label{lemita}
If $A,B\subset \R^n$, with $A$ bounded and $dist(A,B^c)=d>0$, then
$$
|x-y|\geq C(A,B)(1+|y|), \qquad x\in A, y\in B^c.
$$
\end{lem}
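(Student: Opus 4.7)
The plan is to split the estimate into two regimes according to the size of $|y|$, using that $A$ bounded implies $R:=\sup_{a\in A}|a|<\infty$.

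First I would record the trivial bound from the positive distance assumption: for any $x\in A$ and $y\in B^c$,
\[
|x-y|\geq \mathrm{dist}(A,B^c)=d>0.
\]
This is useful when $|y|$ is comparable to the diameter of $A$, because then $1+|y|$ is itself bounded.

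Second, for large $|y|$ I would use the reverse triangle inequality: $|x-y|\geq |y|-|x|\geq |y|-R$. The point is that for $|y|$ large enough this quantity grows linearly in $|y|$, so it dominates a constant times $1+|y|$.

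More precisely, I would carry out the following case distinction with threshold $M:=2R+1$:
\begin{itemize}
\item If $|y|\leq M$, then $1+|y|\leq 1+M=2R+2$, so
\[
|x-y|\geq d\geq \frac{d}{2R+2}\,(1+|y|).
\]
\item If $|y|\geq M$, then $|y|-R\geq \tfrac12(|y|+1)$ (this is equivalent to $|y|\geq 2R+1$), hence
\[
|x-y|\geq |y|-R\geq \tfrac12(1+|y|).
\]
\end{itemize}
Combining the two cases yields the claim with the explicit constant
\[
C(A,B)=\min\!\left\{\frac{d}{2R+2},\,\frac{1}{2}\right\}>0,
\]
which depends on $A$ through $R$ and on the pair $(A,B)$ through $d$.

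There is no real obstacle here; the only mild subtlety is picking the threshold $M$ so that both cases match up with a single positive constant. Any $M>R$ works provided the constants in the two cases are adjusted accordingly; the choice $M=2R+1$ simply makes the arithmetic transparent.
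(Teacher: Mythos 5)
Your proof is correct, and it uses the same two ingredients as the paper ($|x|\leq R$ from boundedness of $A$, and $|x-y|\geq d$ from the distance hypothesis). The paper avoids your case split by absorbing the additive constant directly — writing $1+|y|\leq 1+R+|x-y|\leq \frac{1+R}{d}|x-y|+|x-y|$ — but this is only a presentational difference.
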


\begin{proof}
Assume $A\subset B_R$ for some $R>0$ and set $C=C(A,B):=\frac{1+R}{d}$. Now,
\[
1+|y|\leq 1+|x|+|y-x|\leq 1+R+|x-y|= Cd+|x-y|\leq (1+C)|x-y|
\]
which gives the result.
\end{proof}

\section{Relation between weak, pointwise and strong solutions}
 
In this section show the  relation between weak, pointwise and strong solutions. The following lemma ensures that the definition of weak solution  makes sense:
\begin{lem}\label{lem.buenadefi}  	
Let $\Omega$ be a bounded domain in $\R^n$ and $u\in \W^{s,G}(\Omega)$. Define
\[
\langle\gfls u,\varphi\rangle:=\intr g\left(\frac{u(x)-u(y)}{|x-y|^s}\right)\frac{(\varphi(x)-\varphi(y))}{|x-y|^s}\,d\mu
\]
for $\varphi\in W_0^{s,G}(\Omega)$. Then the  $\langle\gfls u,\cdot\rangle\in W^{-s,\tilde{G}}(\Omega)$.
\end{lem}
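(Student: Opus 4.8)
The plan is to show that the linear functional $\varphi\mapsto\langle\gfls u,\varphi\rangle$ is bounded on $W_0^{s,G}(\Omega)$, i.e. that $|\langle\gfls u,\varphi\rangle|\le C\,\|\varphi\|_{s,G,\Omega}$ for some constant $C$ depending on $u$. The natural tool is H\"older's inequality in Orlicz spaces: for the measure $d\mu=|x-y|^{-n}\,dxdy$ on $\R^n\times\R^n$ one has
\[
\left|\intr FH\,d\mu\right|\le 2\,\|F\|_{\tilde G,d\mu}\,\|H\|_{G,d\mu}
\]
whenever $F\in L^{\tilde G}(d\mu)$ and $H\in L^G(d\mu)$. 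I would apply this with $H(x,y)=D_su(x,y)\cdot(\text{nothing})$—more precisely with $H(x,y)=\dfrac{\varphi(x)-\varphi(y)}{|x-y|^s}=D_s\varphi(x,y)$ and $F(x,y)=g\!\left(D_su(x,y)\right)$. Then $\|H\|_{G,d\mu}=[\varphi]_{s,G,\Omega}\le\|\varphi\|_{s,G,\Omega}$ directly from the definition of the Gagliardo--Luxemburg seminorm, so the whole point reduces to checking that $F=g(D_su)\in L^{\tilde G}(\R^n\times\R^n,d\mu)$, with a norm controlled by $[u]_{s,G,\Omega}$.

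The key step is therefore the estimate $\|g(D_su)\|_{\tilde G,d\mu}\le C\,(1+[u]_{s,G,\Omega})$ (or a similar bound). This follows from the standard Young-function fact that $\tilde G(g(t))\le t\,g(t)\le p^+\,G(t)$, which is an immediate consequence of the definition \eqref{eq.comp} of the complementary function together with \eqref{eq.p}. Hence, writing $\mathfrak{u}=D_su/[u]_{s,G,\Omega}$ so that $\iint G(\mathfrak u)\,d\mu\le 1$, and using the growth/scaling relations \eqref{minmax1}--\eqref{minmax2} together with $\Delta_2$ \eqref{delta2} to absorb the constant $[u]_{s,G,\Omega}$ through $g$, one gets
\[
\iint \tilde G\!\left(\frac{g(D_su)}{\max\{1,[u]_{s,G,\Omega}\}^{\,p^+/p^-}\,c}\right)d\mu\le p^+\iint G\!\left(\frac{D_su}{[u]_{s,G,\Omega}}\right)d\mu\le p^+,
\]
up to renormalizing by $p^+$ (again via $\Delta_2$), which shows $g(D_su)\in L^{\tilde G}(d\mu)$ with the claimed control on its norm. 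Combining with H\"older gives the bound on $\langle\gfls u,\varphi\rangle$, and linearity in $\varphi$ is evident, so the functional lies in $(W_0^{s,G}(\Omega))^\ast=W^{-s,\tilde G}(\Omega)$.

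The main obstacle I anticipate is purely bookkeeping: tracking how the constant $[u]_{s,G,\Omega}$ propagates through the nonlinearity $g$ and the complementary function $\tilde G$, since $g$ and $\tilde G$ are not homogeneous. This is handled cleanly by the $\Delta_2$ condition \eqref{delta2} and the power bounds \eqref{growthg1}, \eqref{growthg2}, \eqref{minmax1}, \eqref{minmax2}, which are exactly why those estimates were recorded in Section \ref{sec.prel}; there is no genuine analytic difficulty, only the care needed to produce an honest finite constant. Alternatively, one can bypass renormalization entirely by invoking the Orlicz-space identification $L^{\tilde G}(d\mu)\cong(L^G(d\mu))^\ast$ under $\Delta_2$ and noting that $g\circ D_s\colon W^{s,G}(\Omega)\to L^{\tilde G}(d\mu)$ is bounded on bounded sets by the pointwise inequality $\tilde G(g(t))\le p^+G(t)$, which I would mention as the conceptually shortest route.
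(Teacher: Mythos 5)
Your proposal has a genuine gap: you read the hypothesis as if $u$ had a finite Gagliardo seminorm on all of $\R^n\times\R^n$, but $u$ is only in $\widetilde W^{s,G}(\Omega)$, which asserts finiteness of $[u]_{s,G,U}$ for \emph{some} bounded $U\supset\supset\Omega$, supplemented by the tail condition $\int_{\R^n}g\bigl(|u(x)|(1+|x|)^{-s}\bigr)(1+|x|)^{-n-s}\,dx<\infty$. Your single-step Orlicz--H\"older argument requires $g(D_su)\in L^{\tilde G}(\R^n\times\R^n,d\mu)$, but that fails in general: e.g.\ if $u$ grows like $(1+|y|)^{s+a}$ with $0<a<s/\Lambda$ (which is allowed, as remarked in the paper), then for $x\in\Omega$ fixed and $|y|\to\infty$ one has $|D_su(x,y)|\sim|y|^a$, so $\tilde G(g(D_su))\gtrsim |y|^{a\lambda}$ on a set of infinite $d\mu$-measure and the $L^{\tilde G}$-norm is infinite. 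The quantity $\langle\gfls u,\varphi\rangle$ is still finite because $\varphi$ vanishes outside $\Omega$: in the far region only $\varphi(x)$ appears (not $\varphi(y)$), and finiteness there comes from $\operatorname{dist}(\Omega,U^c)>0$ together with the tail integrability, not from any $L^{\tilde G}\times L^G$ duality in the $(x,y)$ variables.

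The paper's proof therefore splits the double integral into $I_1=\iint_{U\times U}$ and $I_2=2\iint_{\Omega\times U^c}$. For $I_1$ your idea is essentially right and matches the paper: one uses $\tilde G(g(t))\le tg(t)\le p^+G(t)$ (Young duality plus \eqref{eq.p}) and the Orlicz H\"older/Young inequality on the bounded set $U\times U$, where $[u]_{s,G,U}<\infty$. For $I_2$, however, a completely different mechanism is needed: write $I_2=2\int_\Omega\varphi(x)\bigl(\int_{U^c}g(D_su)|x-y|^{-n-s}\,dy\bigr)dx$, use Lemma \ref{lema.separa} to split $g$ off into terms involving $u(x)$ and $u(y)$ separately, and use Lemma \ref{lemita} ($|x-y|\ge C(1+|y|)$ for $x\in\Omega$, $y\in U^c$) so that the $u(y)$-term is dominated precisely by the tail integral in the definition of $\widetilde W^{s,G}(\Omega)$. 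You would need to add this second piece to make the argument correct under the stated hypotheses; as written, your proof only works under the stronger assumption $u\in W^{s,G}(\R^n)$.
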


\begin{proof}
Let $U\supset\supset\Omega$ such that 
\[
\|u\|_{W^{s,G}(U)}+\int_{\R^n} g\left(\frac{|u(x)|}{(1+|x|)^s}\right) \frac{dx}{(1+|x|)^{n+s}}<\infty.
\]

Now, for $\varphi\in W^{s,G}_0(\Omega)$ we can write
\begin{align*}
\langle\gfls u,\varphi\rangle & = \int\int_{U\times U} g\left(\frac{u(x)-u(y)}{|x-y|^s}\right)\frac{(\varphi(x)-\varphi(y))}{|x-y|^{n+s}}\,dxdy \\
							 & + 2\int\int_{\Omega\times U^c} g\left(\frac{u(x)-u(y)}{|x-y|^s}\right)\varphi(x)\frac{1}{|x-y|^{n+s}}\,dxdy := I_1+I_2.
\end{align*}

First, for $I_1$ we have
\begin{align*}
I_1 & \leq \int\int_{U\times U} \tilde{G}\left(g\left(\frac{u(x)-u(y)}{|x-y|^s}\right)\right)\:\frac{dxdy}{|x-y|^n}+\int\int_{U\times U} G\left(\frac{\varphi(x)-\varphi(y)}{|x-y|^s}\right)\frac{dxdy}{|x-y|^n} \\
	& \leq (p-1)\int\int_{U\times U} G\left(\frac{u(x)-u(y)}{|x-y|^s}\right)\:\frac{dxdy}{|x-y|^n}+\int\int_{U\times U} G\left(\frac{\varphi(x)-\varphi(y)}{|x-y|^s}\right)\frac{dxdy}{|x-y|^n}
\end{align*}
where in the last line we used Lemma 2.9 in \cite{FBS}. Then, $I_1$ is finite and continuous with respect to the strong convergence.

As for $I_2$, we compute, using Lemmas \ref{lema.separa} and \ref{lemita} and the fact that $g$ is increasing,
\begin{align*}
&\int_{U^c} g\left(\frac{u(x)-u(y)}{|x-y|^s}\right)\frac{1}{|x-y|^{n+s}}\,dy\leq \\ 
&\leq \left(\frac{3}{2}\right)^\Lambda\int_{U^c} g\left(\frac{|u(x)|}{|x-y|^s}\right)\frac{1}{|x-y|^{n+s}}\,dy 
+ C\int_{U^c} g\left(\frac{|u(y)|}{|x-y|^s}\right)\frac{1}{|x-y|^{n+s}}\,dy \\
& \leq C \left(g(|u(x)|)\int_{U^c} \frac{\max\{|x-y|^{-s\lambda},|x-y|^{-s\Lambda}\}}{|x-y|^{n+s}}\,dy 
 + \int_{\R^n} g\left(\frac{|u(y)|}{(1+|y|)^s}\right)\frac{1}{(1+|y|)^{n+s}}\,dy\right).
\end{align*}
Notice that $|x-y|$ on the first integral of the last term is bounded from below by dist$(U^c,\Omega)>0$, so both integrals are finite and we conclude the proof.
\end{proof}

The next lemma will be used to show that strong solutions are also weak solutions, but it will also be useful as stated below. First we need to recall the notion of Hausdorff distance between sets:
\[
d_H(A,B)=\max\left\{\sup_{x\in A}\text{dist}(x,B),\sup_{y\in B}\text{dist}(A,y)\right\}.
\]

\begin{lem}\label{lem.lim}  	
Let $u\in \W^{s,G}_{\text{loc}}(\Omega)$ and let $A_\varepsilon\subset\R^n\times\R^n$ be a neighborhood of ${\bf D}:=\{x=y\}$ such that 
\begin{itemize}
\item[(i)] $(x,y)\in A_\varepsilon$ then $(y,x)\in A_\varepsilon$;

\item[(ii)] $d_H(A_\varepsilon,{\bf D})\longrightarrow0$ as $\varepsilon\rightarrow0^+$.
\end{itemize}	
Consider for any $x\in \R^n$
\[
h_\varepsilon(x):=\int_{A_\varepsilon^c(x)}g\left(\frac{u(x)-u(y)}{|x-y|^s}\right)\frac{dy}{|x-y|^{n+s}}
\]
where $A_\varepsilon^c(x):=\{y\in\R^n:(x,y)\in A_\varepsilon\}$.

If $2h_\varepsilon\longrightarrow f$ in $L^1_\text{loc}(\Omega)$ as $\varepsilon\rightarrow0^+$ then $\gfls u=f$ weakly in $\Omega$. 
\end{lem}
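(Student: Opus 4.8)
The plan is to show that $\langle \gfls u, \varphi\rangle = \int_\Omega f\varphi\,dx$ for every test function $\varphi \in W^{s,G}_0(\Omega)$, working first with $\varphi \in C_c^\infty(\Omega)$ and then extending by density. Fix such a $\varphi$ with $\supp\varphi \subset\subset \Omega' \subset\subset \Omega$. For each $\varepsilon>0$ consider the truncated bilinear form
\[
\mathcal{I}_\varepsilon := \iint_{A_\varepsilon^c} g\left(\frac{u(x)-u(y)}{|x-y|^s}\right)\frac{\varphi(x)-\varphi(y)}{|x-y|^s}\,d\mu,
\]
where $A_\varepsilon^c := \{(x,y) : (x,y)\notin A_\varepsilon\}$. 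The first step is to pass to the limit on the left: since $u \in \W^{s,G}_{\text{loc}}(\Omega)$, the full (untruncated) double integral is absolutely convergent — this is exactly the content of Lemma \ref{lem.buenadefi} applied on a bounded set containing $\Omega'$, together with the integrability at infinity built into $\W^{s,G}_{\text{loc}}$ — so by dominated convergence and hypothesis (ii) ($d_H(A_\varepsilon,{\bf D})\to 0$) we get $\mathcal{I}_\varepsilon \to \langle \gfls u,\varphi\rangle$ as $\varepsilon\to 0^+$.

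The second step is to rewrite $\mathcal{I}_\varepsilon$ using the symmetry hypothesis (i): because $A_\varepsilon$ is symmetric and $g$ is odd, the standard symmetrization $(x,y)\mapsto(y,x)$ gives
\[
\mathcal{I}_\varepsilon = \iint_{A_\varepsilon^c} g\left(\frac{u(x)-u(y)}{|x-y|^s}\right)\frac{2\varphi(x)}{|x-y|^s}\,d\mu = 2\int_{\R^n}\varphi(x)\left(\int_{A_\varepsilon^c(x)} g\left(\frac{u(x)-u(y)}{|x-y|^s}\right)\frac{dy}{|x-y|^{n+s}}\right)dx,
\]
that is, $\mathcal{I}_\varepsilon = \int_{\R^n}\varphi(x)\,2h_\varepsilon(x)\,dx$, where I have used Fubini (justified by the same absolute integrability) and $d\mu = dx\,dy/|x-y|^n$. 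Since $\varphi$ is bounded with compact support in $\Omega$ and $2h_\varepsilon \to f$ in $L^1_{\text{loc}}(\Omega)$ by hypothesis, the right-hand side converges to $\int_\Omega f\varphi\,dx$. Combining with the first step yields $\langle\gfls u,\varphi\rangle = \int_\Omega f\varphi\,dx$ for all $\varphi\in C_c^\infty(\Omega)$; a density argument (using that $\langle\gfls u,\cdot\rangle \in W^{-s,\tilde G}(\Omega)$ by Lemma \ref{lem.buenadefi}, hence continuous, and that $f\varphi\mapsto\int f\varphi$ is also continuous on $W^{s,G}_0(\Omega)$ when $f\in L^1_{\text{loc}}$ — here one localizes to $\Omega'$) extends the identity to all $\varphi\in W^{s,G}_0(\Omega)$, which is the definition of $\gfls u = f$ weakly in $\Omega$.

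The main obstacle I anticipate is making the passage to the limit on the \emph{left} side fully rigorous, i.e. justifying that $\mathcal{I}_\varepsilon \to \langle\gfls u,\varphi\rangle$: one needs an $\varepsilon$-uniform integrable majorant for the integrand of $\mathcal{I}_\varepsilon$ on the region near the diagonal and at infinity. Near the diagonal this follows by splitting the integration domain according to whether both $x,y$ lie in $\Omega'$ (where one uses the H\"older-quotient bound $|\varphi(x)-\varphi(y)|\le \|\nabla\varphi\|_\infty|x-y|$, Young's inequality $tw \le \tilde G(t)+G(w)$ with $t = g(D_su)$, $w = D_s\varphi$, and the local finiteness of $\iint_{\Omega'\times\Omega'} \tilde G(g(D_su))\,d\mu$ coming from Lemma 2.9 of \cite{FBS} exactly as in the proof of Lemma \ref{lem.buenadefi}) or one of $x,y$ lies outside $\supp\varphi$ (where $\varphi(x)-\varphi(y)$ is supported away from the diagonal and the kernel is bounded, and one invokes the tail integrability from $\W^{s,G}_{\text{loc}}$ together with Lemmas \ref{lema.separa} and \ref{lemita} as in Lemma \ref{lem.buenadefi}). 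Once this majorant is in hand the argument is a routine application of dominated convergence and Fubini, and the only remaining care is the bookkeeping in the final density step to ensure $f\varphi\in L^1(\Omega)$ for general $\varphi\in W^{s,G}_0(\Omega)$ — which is why one works on a compact exhaustion of $\Omega$ throughout.
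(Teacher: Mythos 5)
Your proposal follows essentially the same route as the paper: truncate the bilinear form to $A_\varepsilon^c$, pass to the limit on the left by dominated convergence (legitimized by Lemma~\ref{lem.buenadefi}), symmetrize, apply Fubini to arrive at $2\int\varphi\, h_\varepsilon$, use the $L^1_{\mathrm{loc}}$ convergence of $2h_\varepsilon$ on the right, and finish by density. One step you treat too lightly, however, is the Fubini/splitting that converts $\mathcal{I}_\varepsilon$ into $2\int_K\varphi\,h_\varepsilon\,dx$. You justify it by ``the same absolute integrability,'' but that is not correct as stated: the symmetrized integrand
\[
\frac{\bigl|g(D_s u)\bigr|\,|\varphi(x)|}{|x-y|^{n+s}}
\]
is strictly more singular near the diagonal than the original $|g(D_s u)|\,|\varphi(x)-\varphi(y)|\,|x-y|^{-n-s}$, because the Lipschitz cancellation $|\varphi(x)-\varphi(y)|\lesssim|x-y|$ is lost once the two halves are separated. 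So the majorant coming from Lemma~\ref{lem.buenadefi} does not control the symmetrized expression on all of $\R^n\times\R^n$. What makes the split and Fubini legitimate is precisely the truncation to $A_\varepsilon^c$, and this must be quantified: the paper does this via a covering argument showing there is a $\rho=\rho(\varepsilon)>0$, uniform over $x\in K=\supp\varphi$, with $B_\rho(x)\subset A_\varepsilon(x)$, and then verifies $h_\varepsilon\in L^1(K)$ directly from that bound. Once you supply this, your argument is complete. It is also worth noting that the step you flagged as the main obstacle (the passage to the limit on the left) is in fact the routine one---it is plain dominated convergence against the $L^1$ majorant already furnished by Lemma~\ref{lem.buenadefi}---while the genuinely delicate point is the one above.
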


\begin{proof}
We may assume $\Omega$ is bounded and that $U\supset\supset\Omega$ is such that 
\[
\|u\|_{W^{s,G}(U)}+\int_{\R^n} g\left(\frac{|u(x)|}{(1+|x|)^s}\right) \frac{dx}{(1+|x|)^{n+s}}<\infty.
\]
Further, by density it is enough to show that
\[
\int\int g\left(\frac{u(x)-u(y)}{|x-y|^s}\right)\frac{\varphi(x)-\varphi(y)}{|x-y|^s}\,d\mu=\int_{\Omega}f\varphi\,dx
\]
holds for any $\varphi\in C^\infty_c(\Omega)$. Let $\varphi$ be such and denote its support by $K$. 

Let us show first that $h_\varepsilon\in L^1(K)$. Notice that given $x\in K$ there exists $\rho>0$ such that $B_\rho(x)\subset A_\varepsilon(x)$ and that such $\rho$ can be taken independently of $x$ (but not necessarily of $\varepsilon$) via a covering argument. We can compute, similarly to Lemma \ref{lem.buenadefi},
\begin{align*}
\int_K &|h_\varepsilon(x)|\:dx  =\int_K\int_{A_\varepsilon^c(x)}g\left(\frac{u(x)-u(y)}{|x-y|^s}\right)\frac{dy}{|x-y|^{n+s}} \\
							& \leq C\left(\int_K\int_{A_\varepsilon^c(x)}g\left(\frac{|u(x)|}{|x-y|^s}\right)\frac{dy}{|x-y|^{n+s}}\:dx +  \int_K\int_{A_\varepsilon^c(x)}g\left(\frac{|u(y)|}{|x-y|^s}\right)\frac{dy}{|x-y|^{n+s}}\:dx\right) \\
							& \leq C\left(\int_K\int_{B_\rho^c}g\left(\frac{|u(x)|}{|x-y|^s}\right)\frac{dy}{|x-y|^{n+s}}\:dx +  |K|\int_{\R^n}g\left(\frac{|u(y)|}{(1+|y|)^s}\right)\frac{dy}{(1+|y|)^{n+s}}\:dx\right)<\infty.
\end{align*}
 
On the other hand, Lemma \ref{lem.buenadefi} shows that 
\[
g\left(\frac{u(x)-u(y)}{|x-y|^s}\right)\frac{\varphi(x)-\varphi(y)}{|x-y|^s}\in L^1(\R^n\times\R^n,d\mu)
\]
and therefore, by using our hypothesis we get that
\begin{align*}
\iint &g\left(\frac{u(x)-u(y)}{|x-y|^s}\right) \frac{\varphi(x)-\varphi(y)}{|x-y|^s}\frac{dxdy}{|x-y|^n}= \\
&= \lim_{\varepsilon\rightarrow0^+}\iint_{A_\varepsilon^c} g\left(\frac{u(x)-u(y)}{|x-y|^s}\right)\frac{\varphi(x)-\varphi(y)}{|x-y|^s}\frac{dxdy}{|x-y|^n} \\
&= \lim_{\varepsilon\rightarrow0^+}2\int_K\int_{A_\varepsilon^c(x)} g\left(\frac{u(x)-u(y)}{|x-y|^s}\right)\varphi(x)\frac{dydx}{|x-y|^{n+s}} = \lim_{\varepsilon\rightarrow0^+}2\int_K h_\varepsilon(x)\varphi(x)\:dx
\end{align*}
and the result follows since $2h_\varepsilon(x)\longrightarrow f$ in $L^1(K)$.
\end{proof}

\begin{cor}\label{cor.strongweak}	
Let $u\in \W^{s,G}_{\text{loc}}(\Omega)$ be a strong solution to $\gfls u=f$ in $\Omega$ with $f\in L^1_{\text{loc}}(\Omega)$. Then $u$ is also a weak solution. 
\end{cor}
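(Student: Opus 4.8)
The plan is to recognize that a strong solution is exactly a function to which Lemma \ref{lem.lim} applies, with the diagonal neighborhoods $A_\varepsilon$ taken to be the $\varepsilon$-tubes $\{|x-y|<\varepsilon\}$, so that the corollary becomes a direct specialization of that lemma.

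First I would unravel the hypothesis. Since $u$ is a strong solution of $\gfls u=f$ it is simultaneously a strong sub- and a strong supersolution, so the truncated principal values
\[
h_\varepsilon(x):=\int_{B_\varepsilon^c(x)} g\!\left(\frac{u(x)-u(y)}{|x-y|^s}\right)\frac{dy}{|x-y|^{n+s}}
\]
satisfy $2h_\varepsilon\to f$ in $L^1_{\text{loc}}(\Omega)$ as $\varepsilon\to0^+$: the subsolution part forces the $L^1_{\text{loc}}$-limit of $2h_\varepsilon$ to be $\le f$, while applying the subsolution inequality to $-u$ and using that $g$ is odd (so that the corresponding truncations are exactly $-2h_\varepsilon$) forces the same limit to be $\ge f$. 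This reconciliation of the $\limsup$ appearing in Definition \ref{def.stsol} with the genuine $L^1_{\text{loc}}$-limit demanded by Lemma \ref{lem.lim} is really the only point that needs a word of care.

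Next I would set $A_\varepsilon:=\{(x,y)\in\R^n\times\R^n:\ |x-y|<\varepsilon\}$, an open neighborhood of the diagonal ${\bf D}=\{x=y\}$. It is symmetric, so condition (i) of Lemma \ref{lem.lim} holds trivially, and $d_H(A_\varepsilon,{\bf D})=\varepsilon\to0^+$, so condition (ii) holds as well. With this choice $A_\varepsilon^c(x)=\{y:(x,y)\notin A_\varepsilon\}=B_\varepsilon^c(x)$, hence the $h_\varepsilon$ produced by Lemma \ref{lem.lim} coincides with the one above. Since $u\in\W^{s,G}_{\text{loc}}(\Omega)$ by assumption and $2h_\varepsilon\to f$ in $L^1_{\text{loc}}(\Omega)$, Lemma \ref{lem.lim} yields $\gfls u=f$ weakly in $\Omega$, i.e. $u$ is a weak solution.

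I do not anticipate any genuine obstacle: the substantive work — the local integrability of $h_\varepsilon$, the passage of the limit inside the bilinear form via dominated convergence against the fixed kernel $g(D_su)\,\frac{\varphi(x)-\varphi(y)}{|x-y|^s}\in L^1(\R^n\times\R^n,d\mu)$ (as guaranteed by Lemma \ref{lem.buenadefi}), and the symmetrization turning $\iint_{A_\varepsilon^c}$ into $2\int_K h_\varepsilon\varphi$ — is all carried out inside the proof of Lemma \ref{lem.lim}. The only mildly delicate step, as noted above, is matching the strong-solution definition to the precise $L^1_{\text{loc}}$-convergence hypothesis of that lemma, and that is handled once one uses the oddness of $g$ together with the sub- and supersolution inequalities simultaneously.
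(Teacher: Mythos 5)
Your proposal is correct and takes exactly the same route as the paper: the paper's proof is literally the one-line observation that one may apply Lemma~\ref{lem.lim} with $A_\varepsilon=\{(x,y)\colon |x-y|<\varepsilon\}$, so that $A_\varepsilon^c(x)=B_\varepsilon^c(x)$. Your extra paragraph reconciling Definition~\ref{def.stsol} with the $L^1_{\mathrm{loc}}$ convergence hypothesis is a reasonable clarification that the paper omits, but it does not constitute a different approach.
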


\begin{proof}
The proof follows from Lemma \ref{lem.lim} with $A_\varepsilon=\{(x,y)\in\R^{2n}:|x-y|<\varepsilon\}$.
\end{proof}

\section{Properties of $\gfls$}

In this last section we prove some properties of the operator $\gfls$. We start with two lemmata regarding the behavior under scaling and rotation:  

\begin{lem}\label{lem.scaling}
Let $u$ be solution of 
\begin{align*}
\begin{cases}
(-\Delta_g)^s u =f &\quad \text{ in } B_R\\
u=0 &\quad \text{ in } B_R^c.
\end{cases}
\end{align*}
If we define, for $x\in \R^n$ and $t\geq 0$
$$
u^R(x)= u\left(Rx\right),\quad \tilde{f}(x)=f\left(Rx\right), \quad g_R(t)=  g\left(R^{-s}t\right), 
$$
then $u_R$ solves
\begin{align*}
\begin{cases}
(-\Delta_{g_R})^s u^R =\tilde{f} &\quad \text{ in } B_1\\
u^R=0 &\quad \text{ in } B_1^c.
\end{cases}
\end{align*}

In particular, if $|(-\Delta_g)^s u|\leq K$ in $B_R$, then $|(-\Delta_{g_R})^s u^R|\leq K$ in $B_1$.
\end{lem}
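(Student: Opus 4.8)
The plan is to obtain the statement by a single change of variables carried out directly on the integrals defining $(-\Delta_{g_R})^s$. I would argue at the level of the weak formulation; the pointwise and strong statements then follow by the same substitution (the latter also via Corollary~\ref{cor.strongweak}).

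First I would record the elementary facts that make the rescaling meaningful: $g_R(t)=g(R^{-s}t)$ is again an absolutely continuous Young function satisfying $(g_1)$--$(g_4)$, and it satisfies \eqref{L} with the \emph{same} constants $\lambda,\Lambda$, because $\frac{t\,g_R'(t)}{g_R(t)}=\frac{(R^{-s}t)\,g'(R^{-s}t)}{g(R^{-s}t)}$; hence $(-\Delta_{g_R})^s$ lies in the same class as $(-\Delta_g)^s$ and every result proven in the paper for the latter applies verbatim to the rescaled equation. I would also note that $u^R=0$ in $B_1^c$ is automatic, since $x\in B_1^c$ iff $Rx\in B_R^c$, and that the dilation $\varphi\mapsto\varphi(R\,\cdot)$ is a bijection of $W_0^{s,G}(B_R)$ onto $W_0^{s,G_R}(B_1)$, where $G_R$ is the primitive of $g_R$.

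The core computation is then this: given a test function $\psi\in W_0^{s,G_R}(B_1)$, set $\varphi:=\psi(\cdot/R)\in W_0^{s,G}(B_R)$, insert $\varphi$ into the weak formulation \eqref{eq.wsol} for $u$, and perform the substitution $x=\bar x/R$, $y=\bar y/R$ in the double integral. Under this substitution the $s$-Hölder quotient scales as $D_s u^R(x,y)=R^s D_s u(\bar x,\bar y)$ and likewise $D_s\psi(x,y)=R^s D_s\varphi(\bar x,\bar y)$, the measure $d\mu=\frac{dx\,dy}{|x-y|^n}$ produces a factor $R^{-n}$, and, crucially, $g_R\big(R^s D_s u\big)=g\big(D_s u\big)$ by the very definition of $g_R$ — this is exactly what the $R^{-s}$ built into $g_R$ is for. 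Collecting the powers of $R$ and using that $u$ solves \eqref{eq.wsol} against $\varphi$, a final rescaling $\bar x=Rx$ of the right-hand side yields $\int_{B_1}\tilde f\,\psi\,dx$, which is precisely the weak formulation of $(-\Delta_{g_R})^s u^R=\tilde f$ in $B_1$. The ``in particular'' clause then drops out, since the same substitution gives the pointwise identity $(-\Delta_{g_R})^s u^R(x)=(-\Delta_g)^s u(Rx)$, so the bound $|(-\Delta_g)^s u|\le K$ in $B_R$ transfers to $|(-\Delta_{g_R})^s u^R|\le K$ in $B_1$.

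I do not expect a genuine obstacle here: the whole content is the bookkeeping of the scaling exponents, the point being that the $R^{-s}$ inside $g_R$ cancels exactly the $R^s$ produced by the scaling of the $s$-Hölder quotient. The only mild care needed is (i) that the excised balls $B_\varepsilon(x)$ in the principal value transform into $B_{R\varepsilon}(Rx)$, so the p.v. limits correspond in the pointwise/strong versions, and (ii) that the test-function correspondence is genuinely onto, so that no test functions are lost in passing to the rescaled problem.
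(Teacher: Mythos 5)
Your approach matches the paper's exactly: both perform the change of variables $\bar x=Rx$, $\bar y=Ry$ in the weak formulation, and both exploit the cancellation $g_R(R^s D_s u)=g(D_s u)$. The extra observations you make (that $g_R$ inherits \eqref{L} with the same constants, and that $\varphi\mapsto\varphi(R\,\cdot)$ is a bijection of test-function spaces) are sound and are stated as a separate remark in the paper (Remark~\ref{rem.scaling}) or left implicit.

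One caveat on bookkeeping, which your proposal shares with the paper's own proof: you correctly record that $D_s\psi(x,y)=R^sD_s\varphi(\bar x,\bar y)$, but that factor $R^s$ does not cancel anything. Collecting exponents gives
\[
\langle(-\Delta_{g_R})^s u^R,\psi\rangle = R^{s-n}\langle(-\Delta_g)^s u,\varphi\rangle = R^{s-n}\int_{B_R}f\varphi = R^s\int_{B_1}\tilde f\psi,
\]
so the identity is actually $(-\Delta_{g_R})^s u^R=R^s\tilde f$ (equivalently, pointwise, $(-\Delta_{g_R})^s u^R(x)=R^s(-\Delta_g)^s u(Rx)$, as a direct computation on the strong form confirms), not $\tilde f$. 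To make the statement as written correct one should take $g_R(t)=R^{-s}g(R^{-s}t)$ or $\tilde f(x)=R^sf(Rx)$. Since the paper's proof drops the same $R^s$, this does not affect the comparison between your argument and theirs; but you should not assert the clean pointwise identity with confidence without tracking that power.
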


\begin{proof}
The proof that $u^R$ solves the equation is a straightforward change of variables $(\tilde{x},\tilde{y})=\left(Rx,Rx\right)$ (recall the definition of $\mu$) 
\begin{align*}
\left\langle(-\Delta_{g})^s u^R,\varphi\right\rangle & =\int\int g\left(R^{-s}\frac{|u\left(Rx\right)-u\left(Rx\right)|}{|x-y|^s}\right)\frac{u\left(Rx\right)-u\left(Rx\right)}{|u\left(Rx\right)-u\left(Rx\right)|}\frac{\varphi(x)-\varphi(y)}{|x-y|^s}\:d\mu \\
	 									   & =R^{-n}\int\int g\left(\frac{|u(\tilde{x})-u(\tilde{y})|}{|\tilde{x}-\tilde{y}|^s}\right)\frac{u(\tilde{x})-u(\tilde{y})}{|u(\tilde{x})-u(\tilde{y})|}\frac{\varphi\left(\frac{\tilde{x}}{R}\right)-\varphi\left(\frac{\tilde{y}}{R}\right)}{|\tilde{x}-\tilde{y}|^s}\:d\mu\\
	 									   & =R^{-n}\left\langle(-\Delta_g)^s u,\varphi\left(\frac{\cdot}{R}\right)\right\rangle.
\end{align*}
On the other hand
\[
\int \tilde{f}\varphi\:dx=\int f(Rx)\varphi(x)\:dx=R^{-n}\int f(\tilde{x})\varphi\left(\frac{\tilde{x}}{R}\right)\:d\tilde{x}
\]
and we conclude. 
\end{proof} 

\begin{rem}\label{rem.scaling}
Observe that our scaling preserves ellipticity; given $g$ satisfying \eqref{L} and $R>0$, the function $g_R(t)=g(R^{-s} t)$, $t\geq 0$ satisfies also \eqref{L}. Indeed, 
$$
\frac{t (g_R (t))'}{g_R(t)} = \frac{ \tau g'(\tau) }{g(\tau)} 
$$
where $\tau=R^{-s}t$. This simple remark is of paramount importance as it allows us to prove our estimates in, say, $B_1$ and obtain the general results by scaling.
\end{rem}

\begin{lem}\label{lem.rotation}
Let $u\in\W^{s,G}(\Omega)$ be a weak solution of $\gfls u=f$ in $\Omega$ for some $f\in L^1_{\text{loc}}(\Omega)$. 

Then for any orthogonal matrix $O\in\R^{n\times n},\: u_O(x):=u(Ox)\in\W^{s,G}(O^{-1}\Omega)$ and 
\[
\gfls u_O=f_O\quad\text{ weakly in }O^{-1}\Omega.
\]
\end{lem}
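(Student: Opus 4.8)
The plan is to imitate the change-of-variables computation behind the scaling Lemma \ref{lem.scaling}, the key simplification being that both the kernel $|x-y|^{-n-s}$ and the $s$-H\"older quotient are already invariant under orthogonal maps, so no analogue of the rescaled $g_R$ appears. First I would check that $u_O\in\W^{s,G}(O^{-1}\Omega)$: since $O$ is an isometry with $|\det O|=1$, given $U\supset\supset\Omega$ witnessing $u\in\W^{s,G}(\Omega)$ one has $O^{-1}U\supset\supset O^{-1}\Omega$, and the substitution $a=Ox$, $b=Oy$ leaves $d\mu=da\,db/|a-b|^n$ invariant while turning $D_s u(a,b)$ into $D_s u_O(x,y)$; similarly $\int G(u_O/\lambda)$ over $O^{-1}U$ equals $\int G(u/\lambda)$ over $U$, and the tail integral $\int_{\R^n}g(|u_O(x)|/(1+|x|)^s)(1+|x|)^{-n-s}\,dx$ is unchanged because $|Ox|=|x|$. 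Hence every quantity in the definition of $\W^{s,G}(O^{-1}\Omega)$ is finite, and Lemma \ref{lem.buenadefi} then guarantees that $\langle\gfls u_O,\cdot\rangle$ is a well-defined element of $W^{-s,\tilde G}(O^{-1}\Omega)$.

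For the weak identity I would fix $\varphi\in W^{s,G}_0(O^{-1}\Omega)$ and set $\psi:=\varphi\circ O^{-1}$. Since $O$ is a linear homeomorphism, $\psi\in W^{s,G}_0(\Omega)$ with $\supp\psi=O(\supp\varphi)\subset\Omega$, so $\psi$ is an admissible test function for $u$. Plugging $\psi$ into \eqref{eq.wsol} and substituting $a=Ox$, $b=Oy$ in the double integral (using $|a-b|=|x-y|$, $da\,db=dx\,dy$, $u(Ox)=u_O(x)$, $\psi(Ox)=\varphi(x)$) transforms the left-hand side into
$$
\intr g\!\left(\frac{u_O(x)-u_O(y)}{|x-y|^s}\right)\frac{\varphi(x)-\varphi(y)}{|x-y|^s}\,d\mu=\langle\gfls u_O,\varphi\rangle,
$$
while the same substitution on the right-hand side gives $\int_\Omega f\psi\,dx=\int_{O^{-1}\Omega}f(Ox)\varphi(x)\,dx=\int_{O^{-1}\Omega}f_O\varphi\,dx$. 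Equating the two yields $\langle\gfls u_O,\varphi\rangle=\int_{O^{-1}\Omega}f_O\varphi\,dx$ for all such $\varphi$, which is the claim.

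I do not expect a genuine obstacle here: the argument is just the rotational counterpart of Lemma \ref{lem.scaling}. The only mildly delicate points are bookkeeping — correctly identifying $O^{-1}\Omega$ as the image domain, verifying that $\psi=\varphi\circ O^{-1}$ is admissible (which uses only that $O$ is a linear homeomorphism), and observing that the oddness of $g$ keeps the integrand symmetric in $(x,y)$, so no sign ambiguity arises when the roles of the variables are exchanged. For unbounded $\Omega$ one simply runs the same computation on every bounded open subset, as in Definition \ref{def.wsol}, which requires no new idea.
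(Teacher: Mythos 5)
Your proposal is correct and follows essentially the same change-of-variables argument as the paper's proof (which substitutes $(\tilde x,\tilde y)=(Ox,Oy)$ directly in the weak formulation and uses that orthogonal matrices preserve distances and have unit Jacobian). Your write-up is if anything a bit more careful, in explicitly verifying the membership $u_O\in\widetilde W^{s,G}(O^{-1}\Omega)$ and the admissibility of $\psi=\varphi\circ O^{-1}$ as a test function, which the paper leaves implicit.
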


\begin{proof}
Let $(\tilde{x},\tilde{y})=\left(Ox,Ox\right)$ and change variables (recall orthogonal matrices preserve norms) 
\begin{align*}
\left\langle(-\Delta_g)^s u_O,\varphi\right\rangle & =\int\int g\left(\frac{|u\left(Ox\right)-u\left(Ox\right)|}{|x-y|^s}\right)\frac{u\left(Ox\right)-u\left(Ox\right)}{|u\left(Ox\right)-u\left(Ox\right)|}\frac{\varphi(x)-\varphi(y)}{|x-y|^s}\:d\mu \\
	 									   & =\int\int g\left(\frac{|u(\tilde{x})-u(\tilde{y})|}{|\tilde{x}-\tilde{y}|^s}\right)\frac{u(\tilde{x})-u(\tilde{y})}{|u(\tilde{x})-u(\tilde{y})|}\frac{\varphi\left(O^{-1}\tilde{x}\right)-\varphi\left(O^{-1}\tilde{y}\right)}{|O^{-1}\tilde{x}-O^{-1}\tilde{y}|^s}\,d\mu\\
	 									   & =\left\langle(-\Delta_g)^s u,\varphi\left(O^{-1}  \tilde y\right)\right\rangle.
\end{align*}
On the other hand
\[
\int_\Omega f_O\varphi\:dx=\int f(Ox)\varphi(x)\:dx=\int_{O^{-1}\Omega} f(\tilde{x})\varphi\left(O^{-1}\tilde{x}\right)\:d\tilde{x}
\]
and we conclude. 
\end{proof} 

Next, we prove a comparison principle for weak solutions.

\begin{prop}  \label{compara}
Let $\Omega$ be bounded, $u,v \in \widetilde{W}^{s,G}(\Omega)$  such that $u\leq v$ in $\Omega^c$ and
$$
\langle u,\varphi\rangle \leq  \langle v,\varphi\rangle \qquad \forall \varphi \in W^{s,p}_0(\Omega), \quad \varphi\geq 0  \text { in }\Omega.
$$
Then $u\leq v$ in $\Omega$.
\end{prop}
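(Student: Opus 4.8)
The plan is to test the weak inequality with the positive part of $u-v$. Write $w:=u-v$ and $\varphi:=w_+=\max\{w,0\}$. First I would check that $\varphi$ is an admissible test function: it is nonnegative, and since $u\le v$ in $\Omega^c$ it vanishes identically there, so $\varphi$ is supported in $\overline\Omega$; combining $u,v\in\W^{s,G}(\Omega)$ with the elementary fact that truncation does not increase the Gagliardo--Orlicz modular shows $\varphi\in W^{s,G}_0(\Omega)$, the only slightly technical point being finiteness of the long-range part of $[\varphi]_{s,G,\R^n}$ (the integral over $\Omega\times U^c$), which is estimated exactly as in the proof of Lemma~\ref{lem.buenadefi}. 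Then $\varphi$ is nonnegative in $\Omega$ and admissible.

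Next I would insert $\varphi$ into the hypothesis and expand the pairings according to Definition~\ref{def.wsol}, obtaining
\begin{equation}\label{eq.comp1}
\intr\Big[g\Big(\tfrac{u(x)-u(y)}{|x-y|^s}\Big)-g\Big(\tfrac{v(x)-v(y)}{|x-y|^s}\Big)\Big]\frac{w_+(x)-w_+(y)}{|x-y|^s}\,d\mu\le 0.
\end{equation}
The crucial observation is that the integrand in \eqref{eq.comp1} is nonnegative $\mu$-a.e. Indeed $\tfrac{u(x)-u(y)}{|x-y|^s}-\tfrac{v(x)-v(y)}{|x-y|^s}=\tfrac{w(x)-w(y)}{|x-y|^s}$ carries the sign of $w(x)-w(y)$; as $g$ is nondecreasing the bracket carries that same sign (or vanishes); and as $t\mapsto t_+$ is nondecreasing, $w_+(x)-w_+(y)$ also carries the sign of $w(x)-w(y)$ (or vanishes). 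Hence the product is $\ge 0$, and together with \eqref{eq.comp1} the integrand must vanish $\mu$-a.e.

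To conclude I would argue by contradiction: suppose $A:=\{x\in\Omega:w(x)>0\}$ has positive Lebesgue measure, and pick $M$ so large that $\Omega\subset B_M$ and $|\Omega^c\cap B_M|>0$. For $(x,y)\in A\times(\Omega^c\cap B_M)$ we have $w(y)=u(y)-v(y)\le 0<w(x)$, hence $\tfrac{u(x)-u(y)}{|x-y|^s}>\tfrac{v(x)-v(y)}{|x-y|^s}$; since $g$ is strictly increasing (a consequence of \eqref{L}) and $w_+(x)-w_+(y)=w(x)>0$, the integrand of \eqref{eq.comp1} is \emph{strictly} positive on this set, which has positive $\mu$-measure because $|x-y|\le\operatorname{diam}(\Omega)+M$ there. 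This contradicts the a.e. vanishing just established, so $|A|=0$, i.e.\ $u\le v$ a.e.\ in $\Omega$. The only step that requires genuine care is the verification that $w_+\in W^{s,G}_0(\Omega)$ and that both pairings are well defined along the way (handled by Lemma~\ref{lem.buenadefi}); the monotonicity argument that constitutes the heart of the proof is entirely elementary.
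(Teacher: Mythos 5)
Your proposal is correct and uses the same core idea as the paper: test the weak inequality with $\varphi=(u-v)_+$ and exploit the monotonicity of $g$. The difference lies in the bookkeeping at the end. The paper factors $g(b)-g(a)=(b-a)Q(x,y)$ via the integral mean value theorem, expands $(w(x)-w(y))(w_+(x)-w_+(y))$ into the sum $(w_+(x)-w_+(y))^2 + w_-(x)w_+(y)+w_-(y)w_+(x)$ to conclude each nonnegative piece vanishes, and then deduces $w_+$ is constant, hence zero by the exterior data. You instead observe directly that $g\bigl(\tfrac{u(x)-u(y)}{|x-y|^s}\bigr)-g\bigl(\tfrac{v(x)-v(y)}{|x-y|^s}\bigr)$ and $w_+(x)-w_+(y)$ both carry the sign of $w(x)-w(y)$, so the integrand is nonnegative and hence vanishes a.e., and then you locate a contradiction on $A\times(\Omega^c\cap B_M)$ where the integrand is \emph{strictly} positive (using that $g$ is strictly increasing, which follows from \eqref{L}). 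Your finishing step is arguably more transparent: it avoids both the auxiliary $Q(x,y)$ and the quadratic identity, and it makes the role of the complementary condition $u\le v$ in $\Omega^c$ explicit by picking the second variable there, whereas the paper's ``so this equality holds identically'' passes from an a.e.\ statement to a pointwise one without comment. Both approaches agree on what must be verified (admissibility of $(u-v)_+$ as a test function, which you handle by the truncation inequality $|w_+(x)-w_+(y)|\le|w(x)-w(y)|$ together with the long-range estimate of Lemma~\ref{lem.buenadefi}), and both are correct.
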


\begin{proof}
By hypothesis we have
\[
\intr g\left(\frac{u(x)-u(y)}{|x-y|^s}\right)\frac{\varphi(x)-\varphi(y)}{|x-y|^s}\,d\mu \leq\intr g\left(\frac{v(x)-v(y)}{|x-y|^s}\right)\frac{\varphi(x)-\varphi(y)}{|x-y|^s}\,d\mu
\]
for any $\varphi\in W^{s,G}_0(\Omega)$.

Subtracting the lhs of the previous inequality to the rhs and using 
\[
g(b)-g(a)=(b-a)\int_0^1g'(a+t(b-a))\:dt=(b-a)Q(x,y)
\]
for $b=\frac{v(x)-v(y)}{|x-y|^s}$ and $a=\frac{u(x)-u(y)}{|x-y|^s}$ 
\[
\intr Q(x,y)\frac{((u(y)-v(y)-(u(x)-v(x))(\varphi(x)-\varphi(y))}{|x-y|^{2s}}\:d\mu\geq 0
\]
with
\[
Q(x,y):=\int_0^1g'\left(\frac{u(x)-u(y)}{|x-y|^s}+t\left(\frac{v(x)-v(y)}{|x-y|^s}-\frac{u(x)-u(y)}{|x-y|^s}\right)\right)\:dt.
\]
Notice that $Q$ is nonnegative. 

Now use $\varphi=(u-v)_+$ as a test function and note that, calling $w=u-v$, 
\begin{align*}
((u(y)-v(y)-&(u(x)-v(x))(\varphi(x)-\varphi(y))  =-(w(x)-w(y))(w_+(x)-w_+(y)) \\
											   & =-(w_+(x)-w_+(y))^2 -w_-(x)w_+(y)-w_-(y)w_+(x) \leq 0.
\end{align*}

Therefore $Q(x,y)(w(x)-w(y))(\varphi(x)-\varphi(y))=0\quad\forall x,y\in\R^n$.

This can only happen if either of the terms vanishes, but in all three cases we get $(u-v)_+(x)=(u-v)_+(y)$, so this equality holds identically. Since outside $\Omega$ this gives $0$  we must have $(u-v)_+\equiv0$ in $\Omega$ as desired.
\end{proof}

Finally, the next Lemma is instrumental in several parts of the rest of the paper and it is strongly nonlocal in character:

\begin{lem} \label{nonlocal.behavior}
Let $u\in \widetilde W^{s,G}_{loc}(\Omega)$ be such that solves $(-\Delta_g)^s u=f$ (weakly, strongly, pointwisely) in $\Omega$  for some $f\in L^1_{loc}(\Omega)$. Let $v\in L^1_{loc}(\R^n)$ be such that
$$
dist(\supp(v),\Omega)>0, \qquad H_{g,\Omega^c}(u):=\int_{\Omega^c} g\left(\frac{u(x)}{(1+|x|)^s}\right) \frac{dx}{(1+|x|)^{n+s}}. <\infty,
$$
and define for a.e. Lebesgue point $x\in\Omega$ of $u$
$$
h(x)=2\int_{\supp (v)} \left[g\left(\frac{u(x) -u(y)-v(y)}{|x-y|^s} \right) - g\left(\frac{u(x) -u(y)}{|x-y|^s} \right) \right]\frac{ 1}{|x-y|^{n+s}} \,dy.
$$
Then $u+v \in \widetilde W_{loc}^{s,G}(\Omega)$ and it solves $(-\Delta_g)^s (u+v)=f+h$ (weakly, strongly, pointwisely) in $\Omega$. 
\end{lem}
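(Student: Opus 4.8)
The plan is to reduce first to bounded $\Omega$, then to establish the membership $u+v\in\widetilde W^{s,G}_{\text{loc}}(\Omega)$ together with $h\in L^1_{\text{loc}}(\Omega)$, and finally to verify the equation separately in the weak, strong and pointwise senses. The reduction is immediate: by the definition of $\widetilde W^{s,G}_{\text{loc}}$ and of the three notions of (sub/super)solution, all the statements localize to bounded open $\widetilde\Omega\subset\Omega$, for which still $\operatorname{dist}(\supp v,\widetilde\Omega)\ge\operatorname{dist}(\supp v,\Omega)>0$; so we may assume $\Omega$ bounded and fix a bounded open $U$ with $\Omega\subset\subset U$ and $U\cap\supp v=\varnothing$. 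Since $u+v=u$ on $U$, one has $\|u+v\|_{s,G,U}=\|u\|_{s,G,U}<\infty$; and splitting $\R^n=U\cup\supp v\cup\big(\R^n\setminus(U\cup\supp v)\big)$, the tail integral of $u+v$ is dominated on $U$ by $\int_U g(|u|)\,dx<\infty$, on the third piece by $H_{g,\Omega^c}(u)<\infty$, and on $\supp v$ it is controlled, via $g\big((|u|+|v|)/(1+|x|)^s\big)\le C\big(g(|u|/(1+|x|)^s)+g(|v|/(1+|x|)^s)\big)$ (Lemma \ref{lema.separa}), by $H_{g,\Omega^c}(u)$ and the integrability of $v$. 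The finiteness of $h$ in $L^1_{\text{loc}}(\Omega)$ is then proved exactly as in Lemma \ref{lem.buenadefi}: for $x\in\Omega$ and $y\in\supp v$ the distance $|x-y|$ is bounded below and comparable to $1+|y|$ (Lemma \ref{lemita}), so using that $g$ is odd, Lemma \ref{lema.separa} and the $\Delta_2$ condition \eqref{delta2} one bounds $\int_\Omega|h|\,dx$ by a combination of $\int_\Omega g(|u|)\,dx$, $H_{g,\Omega^c}(u)$ and the tail of $v$; the same estimate will also license the use of Fubini below.

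For the weak formulation, take $\varphi\in W_0^{s,G}(\Omega)$ with $\varphi\ge0$. Since the integrand is symmetric in $(x,y)$ and $\varphi\equiv0$ outside $\Omega$, the integral over $\R^n\times\R^n$ decomposes into a contribution over $\Omega\times\Omega$ — where $v\equiv0$, so the integrand coincides with that of $\gfls u$ — and twice a contribution over $\Omega\times\Omega^c$, where $v(x)=0$ and $\varphi(y)=0$, hence $(u+v)(x)-(u+v)(y)=u(x)-u(y)-v(y)$. Writing the weak formulation of $\gfls u=f$ with the same decomposition and subtracting, everything cancels except
\begin{align*}
&\intr g\!\left(\frac{(u+v)(x)-(u+v)(y)}{|x-y|^s}\right)\frac{\varphi(x)-\varphi(y)}{|x-y|^s}\,d\mu-\int_\Omega f\varphi\,dx\\
&\qquad= 2\iint_{\Omega\times\Omega^c}\left[g\!\left(\frac{u(x)-u(y)-v(y)}{|x-y|^s}\right)-g\!\left(\frac{u(x)-u(y)}{|x-y|^s}\right)\right]\frac{\varphi(x)}{|x-y|^{n+s}}\,dxdy.
\end{align*}
The bracket vanishes for $y\notin\supp v$, so by Fubini the right-hand side equals $\int_\Omega h\varphi\,dx$; thus the left-hand side equals $\int_\Omega(f+h)\varphi\,dx$, which is the weak form of $\gfls(u+v)=f+h$. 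Since the whole chain is an identity, the subsolution/supersolution inequalities are preserved verbatim.

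For the pointwise case — whence also the strong one — note that if $x\in\Omega$ is a Lebesgue point of $u$ it is also one of $u+v$ with the same value, because $v\equiv0$ on a neighbourhood of $\Omega$; and for every $\varepsilon<\operatorname{dist}(\supp v,\Omega)$ one has $B_\varepsilon(x)\cap\supp v=\varnothing$, so splitting $B_\varepsilon^c(x)$ into the part avoiding $\supp v$ (on which $v=0$) and $\supp v$, and adding and subtracting the $v$-free integrand on $\supp v$, gives
$$
2\int_{B_\varepsilon^c(x)} g\!\left(\frac{(u+v)(x)-(u+v)(y)}{|x-y|^s}\right)\frac{dy}{|x-y|^{n+s}}
= 2\int_{B_\varepsilon^c(x)} g\!\left(\frac{u(x)-u(y)}{|x-y|^s}\right)\frac{dy}{|x-y|^{n+s}} + h(x),
$$
with $h(x)$ independent of $\varepsilon$. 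Letting $\varepsilon\to0^+$ — as a $\limsup$ for subsolutions, a $\liminf$ for supersolutions, or in $L^1_{\text{loc}}(\Omega)$ for strong solutions — and using that $u$ solves $\gfls u=f$ in the corresponding sense yields $\gfls(u+v)=f+h$ in the same sense, which completes the argument.

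I expect the only genuine work to be the bookkeeping of the first paragraph — the integrability of $h$, the membership $u+v\in\widetilde W^{s,G}_{\text{loc}}(\Omega)$, and the admissibility of Fubini in the weak computation — but, thanks to $\operatorname{dist}(\supp v,\Omega)>0$ together with Lemmas \ref{lemita} and \ref{lema.separa} and the $\Delta_2$ condition, these are precisely the estimates already performed in the proof of Lemma \ref{lem.buenadefi}; once one records that $v$ vanishes on a full neighbourhood of $\Omega$, the algebraic identities underlying the equation in all three cases are immediate.
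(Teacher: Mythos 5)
Your proposal is correct and follows essentially the same route as the paper: reduce to bounded $\Omega$, verify $u+v\in\widetilde W^{s,G}_{\text{loc}}(\Omega)$ and the integrability of $h$ using $\Delta_2$, Lemma \ref{lemita} and Lemma \ref{lema.separa}, then handle the weak case via the $\Omega\times\Omega$ / $\Omega\times\Omega^c$ decomposition plus Fubini, and the pointwise/strong cases by splitting $B_\varepsilon^c(x)$ and passing to the limit. The only cosmetic differences are that you test directly with $\varphi\in W^{s,G}_0(\Omega)$ where the paper uses $C_c^\infty(\Omega)$ followed by density, and that you invoke Lemma \ref{lema.separa} where the paper applies \eqref{delta2} directly; neither changes the substance of the argument.
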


\begin{proof}
It suffices to consider $\Omega$ bounded. Let us see that $u+v \in \widetilde W_{loc}^{s,G}(\Omega)$. Denote $K=\supp(v)$ and $U$ such that 
\[
\|u\|_{s,G,U} + \int_{\R^n} g\left(\frac{|u(x)|}{(1+|x|)^s}\right) \frac{dx}{(1+|x|)^{n+s}}<\infty,
\]
and without loss of generality that $\Omega\subset\subset U \subset\subset K^c$. It follows that $u+v=u$ in $U$ and it belongs to $W^{s,G}(U)$. Moreover, in light of \eqref{delta2}
\begin{align*}
\int_{\R^n} g\left(\frac{|u(x)+v(x)|}{(1+|x|)^s}\right) \frac{dx}{(1+|x|)^{n+s}} & \leq 2^\Lambda \left( \int_{\R^n} g\left(\frac{|u(x)|}{(1+|x|)^s}\right) \frac{dx}{(1+|x|)^{n+s}} \right. \\
																				& +\left. \int_K g\left(\frac{|v(x)|}{(1+|x|)^s}\right) \frac{dx}{(1+|x|)^{n+s}}\right),
\end{align*}
which is finite due to the assumptions on $u$ and $v$. Similarly, by using Lemma \ref{lemita} and the assumptions on $u$ and $v$ we get
$$
h(x)\leq C((-\Delta_g)^s u(x) + H_{g,K}(v))<\infty
$$
for some constant $C>0$ independent of $u$ and $v$.

Assume that $(-\Delta_g)^s u=f$ weakly. Let $\varphi\in C_c^\infty(\Omega)$. Then
\begin{align*}
\langle& (-\Delta_g)^s (u+v),\varphi\rangle = \iint_{\Omega\times\Omega} g\left(\frac{u(x) -u(y)}{|x-y|^s} \right) \frac{\varphi(x)-\varphi(y)}{|x-y|^s} \, d\mu+\\
&+\iint_{\Omega\times\Omega^c} g\left(\frac{u(x)-u(y) -v(y)}{|x-y|^s} \right) \frac{\varphi(x)}{|x-y|^s} \, d\mu-\iint_{\Omega^c\times\Omega}  g\left(\frac{ u(x)+v(x) -u(y) }{|x-y|^s} \right) \frac{\varphi(y)}{|x-y|^s} \, d\mu.
\end{align*}
The last expression can be written as
\begin{align*}
&\iint_{\R^n\times\R^n} g\left(\frac{u(x) -u(y)}{|x-y|^s} \right) \frac{\varphi(x)-\varphi(y)}{|x-y|^s} \, d\mu-
\iint_{\Omega\times\Omega^c} g\left(\frac{u(x) -u(y)}{|x-y|^s} \right) \frac{\varphi(x)}{|x-y|^s} \, d\mu\\
&-\iint_{\Omega^c\times\Omega} g\left(\frac{u(x) -u(y)}{|x-y|^s} \right) \frac{\varphi(y)}{|x-y|^s} \, d\mu
+2\iint_{\Omega\times\Omega^c} g\left(\frac{u(x) -u(y)-v(y)}{|x-y|^s} \right) \frac{\varphi(x)}{|x-y|^s} \, d\mu,
\end{align*}
and we obtain that
\begin{align*}
&\langle (-\Delta_g)^s  (u+v),\varphi\rangle = \int_\Omega f\varphi\,dx\; +\\
&+2\iint_{\Omega\times\Omega^c} \left[g\left(\frac{u(x) -u(y)-v(y)}{|x-y|^s} \right) - g\left(\frac{u(x) -u(y)}{|x-y|^s} \right) \right]\frac{\varphi(x)}{|x-y|^s} \, d\mu= \int_\Omega (f+h)\varphi\,dx
\end{align*}
where we have used Fubini's Theorem. The result follows by a density argument.

If we have now $\gfls u=f$ strongly or pointwisely in $\Omega$. Let $x\in V\subset\subset\Omega$ and $\varepsilon<$dist$(V,\Omega^c)$, and consider
\[
I_\varepsilon=\int_{B_\varepsilon^c(x)}g\left(\frac{u(x)+v(x)-u(y)-v(y)}{|x-y|^s}\right)\frac{dy}{|x-y|^{n+s}}.
\] 
We want to take $\lim_{\varepsilon\rightarrow0^+}I_\varepsilon$ to get the pointwise result. Since $v$ vanishes inside $\Omega$,
\begin{align*}
I_\varepsilon & =\int_{\Omega\setminus B_\varepsilon(x)}g\left(\frac{u(x)-u(y)}{|x-y|^s}\right)\frac{dy}{|x-y|^{n+s}}+\int_{\Omega^c}g\left(\frac{u(x)-u(y)-v(y)}{|x-y|^s}\right)\frac{dy}{|x-y|^{n+s}} \\
			  & = \int_{ B_\varepsilon(x)}g\left(\frac{u(x)-u(y)}{|x-y|^s}\right)\frac{dy}{|x-y|^{n+s}} \\
			  & \;\;+ \int_{K}\left(g\left(\frac{u(x)-u(y)-v(y)}{|x-y|^s}\right)-g\left(\frac{u(x)-u(y)}{|x-y|^s}\right)\right)\frac{dy}{|x-y|^{n+s}}
\end{align*}
so taking the limit gives the pointwise result. For the strong solution, we just need to be able to use Dominated Convergence Theorem; for that simply notice that 
\[
\int_{K}\left(g\left(\frac{u(x)-u(y)-v(y)}{|x-y|^s}\right)-g\left(\frac{u(x)-u(y)}{|x-y|^s}\right)\right)\frac{dy}{|x-y|^{n+s}}\in L^1(K)
\]
by a similar reasoning to that of the proof of Lemma \ref{lem.buenadefi}.
\end{proof}

\subsection*{Acknowledgements.} This work was partially supported by CONICET under grant  PIP No. 11220150100032CO, by ANPCyT under grant PICT 2016-1022 and by the University of Buenos Aires under grant 20020170100445BA. JFB and AS are members of CONICET and HV is a postdoctoral fellow of CONICET.

\end{document}